\newcommand{\T}{{\cal T}}
\newcommand{\Real}{\mathbb R}
\newcommand{\To}{\longrightarrow}
\newcommand{\p}{\pi^{-1}(TM)}
\newcommand {\cp}{\mathfrak{X}(\pi (M))}
\newcommand {\cpp}{\mathfrak{X}(\T M)}
\def\Section#1{\vspace{30truept}\addtocounter{section}{1}\setcounter{thm}{0}
\setcounter{equation}{0}{\noindent\Large\bf
    \arabic{section}.~~#1}\par \vspace{12pt}}
\newtheorem{thm}{Theorem}[section]
\newtheorem{cor}[thm]{Corollary}
\newtheorem{lem}[thm]{Lemma}
\newtheorem{prop}[thm]{Proposition}
\newtheorem{defn}[thm]{Definition}
\numberwithin{equation}{section}
\begin{document}

\title{ GEOMETRIC OBJECTS ASSOCIATED WITH THE FUNDAMENTAL CONNECTIONS IN FINSLER GEOMETRY }
\author{{\bf Nabil L. Youssef$^{\dag}$, S. H. Abed$^{\dag}$ and A. Soleiman$^{\ddag}$}}
\date{}

\maketitle                     
\vspace{-1.15cm}
\begin{center}
{$^{\dag}$Department of Mathematics, Faculty of Science,\\ Cairo
University, Giza, Egypt}
\end{center}
\vspace{-0.8cm}
\begin{center}
nlyoussef2003@yahoo.fr,\ sabed52@yahoo.fr
\end{center}
\vspace{-0.7cm}
\begin{center}
and
\end{center}
\vspace{-0.7cm}
\begin{center}
{$^{\ddag}$Department of Mathematics, Faculty of Science,\\ Benha
University, Benha,
 Egypt}
\end{center}
\vspace{-0.8cm}
\begin{center}
soleiman@mailer.eun.eg
\end{center}
\smallskip

\vspace{1cm} \maketitle
\smallskip

\noindent{\bf Abstract.}   The aim of the present paper is to
provide an \emph{intrinsic} investigation  of the properties of the
most important geometric objects associated with the fundamental
linear connections in Finsler geometry. We investigate intrinsically
the most general relations concerning the torsion tensor fields and
the curvature tensor fields associated with a given regular
connection on the pullback bundle of a Finsler manifold. These
relations, in turn,  play a key role in obtaining other interesting
results concerning the properties of the most important  geometric
objects associated with the fundamental canonical linear connections
on the pullback bundle of a Finsler manifold, namely, the Cartan
connection, the Berwald connection, the Chern (Rund) connection and
the Hashiguchi connection.
\par
For the sake of completeness and for comparison reasons, we provide
an appendix presenting a global survey of canonical linear
connections in Finsler geometry and the fundamental geometric
objects associated with them. 

\bigskip
\medskip\noindent{\bf Keywords:\/}\, Regular connection, Barthel connection,  Cartan
connection, Berwald connection, Chern connection, Hashiguchi
connection, Torsion tensor field, Curvature tensor field.

\bigskip
\medskip\noindent{\bf 2000 AMS Subject Classification.\/} 53C60,
53B40

\newpage
\vspace{30truept}\centerline{\Large\bf{Introduction}}\vspace{12pt}
\par
Studying Finsler geometry, one encounters substantial difficulties
trying to seek analogues of classical global, or sometimes even
local, results of Riemannian geometry. These difficulties arise
mainly from the fact that in Finsler geometry all geometric objects
depend not only on positional coordinates, as in Riemannian
geometry, but also on directional arguments.
\par
 In Riemannian geometry, there is a canonical linear connection on the
manifold $M$, namely, the Levi-Civita connection, whereas in Finsler
geometry there is a corresponding canonical linear connection due to
E. Cartan. However, this is not a connection on $M$ but is a
connection on $T(\T M)$, the tangent bundle of $\,\T M$, or on
$\,\pi^{-1}(TM) $, the pullback of the tangent bundle $TM$ by
$\,\pi: \T M\longrightarrow M$. Moreover, in Riemannian geometry
there is one curvature tensor and no torsion tensor  associated with
the Levi-Civita connection on $M$, whereas  in Finsler geometry
 there are three curvature tensors  and five torsion tensors
 associated with the Cartan  connection on $\,\pi^{-1}(TM) $.
 Besides, there are other canonical linear connections together with their associated torsion and curvature tensor fields.
 Consequently, Finsler geometry is richer in structure and content than Riemannian
 geometry  and thus potentially
  more appropriate for dealing with physical theories at a deeper level.
\par
 The theory of connections and their associated geometric objects is an important field of differential
 geometry.  The most important linear connections and their associated  geometric objects  in Finsler geometry were
 studied \emph{locally} in \cite{r91}, \cite{r34}, \cite{r93},...etc.
\par
In \cite{r92} and  \cite{r94}, we have provided new intrinsic proofs
of intrinsic versions of the existence and uniqueness theorems for
the fundamental linear connections on the pullback bundle of a
Finsler manifold, namely,   the Cartan connection, the Berwald
connection, the Chern (Rund) connection and the Hashiguchi
connection. The present paper is a continuation of this work where
we investigate \emph{intrinsically} the fundamental properties of
the most important geometric objects associated with these
connections.
\par
The  paper consists of five parts preceded by an introductory
section $(\S 1)$, which provides a brief account of the basic
concepts and results necessary for this work. For more details, we
refer to \cite{r58}, \cite{r61}, \cite{r21}, \cite{r74}, and
\cite{r44}.
\par
In the first part $(\S 2)$, we investigate the fundamental relations
concerning the torsion tensor fields and the curvature tensor fields
associated with a given regular connection on $\pi^{-1}(TM)$. These
relations, in turn, play a key role in obtaining other interesting
results. The second part $(\S 3)$ is devoted to study the
fundamental properties of the most important geometric objects
associated with the Cartan connection. In the third part $(\S 4)$,
various fundamental relations and properties concerning the torsion
tensor fields and the curvature tensor fields associated with the
Berwald connection are obtained. In the fourth  and the fifth parts
($(\S 5)$ and $(\S 6)$), as in the previous sections, we study  the
most important geometric objects associated with the Chern
connection and the Hashiguchi connection, respectively.
\par
For the sake of completeness and for comparison reasons, the paper
is concluded with an appendix presenting a global survey of
canonical linear connections in Finsler geometry and the fundamental
geometric objects associated with them.
\par
The present work is formulated in a prospective modern
coordinate-free form, without being trapped into the complications
of indices. However, the local expressions of the obtained results,
when calculated, coincide with the existing classical local results.


\Section{Notation and Preliminaries}

In this section, we give a brief account of the basic concepts
 of the pullback approach to intrinsic Finsler geometry necessary for this work. For more
 details, we refer to \cite{r58},\,\cite{r61},\,\cite{r74} and~\,\cite{r44}.
 We make the
assumption that the geometric objects we consider are of class
$C^{\infty}$.\\ The
following notation will be used throughout this paper:\\
 $M$: a real paracompact differentiable manifold of finite dimension $n$ and of
class $C^{\infty}$,\\
 $\mathfrak{F}(M)$: the $\Real$-algebra of differentiable functions
on $M$,\\
 $\mathfrak{X}(M)$: the $\mathfrak{F}(M)$-module of vector fields
on $M$,\\
$\pi_{M}:TM\longrightarrow M$: the tangent bundle of $M$,\\
$\pi: \T M\longrightarrow M$: the subbundle of nonzero vectors
tangent to $M$,\\
$V(TM)$: the vertical subbundle of the bundle $TTM$,\\
 $P:\pi^{-1}(TM)\longrightarrow \T M$ : the pullback of the
tangent bundle $TM$ by $\pi$,\\
 $\mathfrak{X}(\pi (M))$: the $\mathfrak{F}(\T M)$-module of
differentiable sections of  $\pi^{-1}(T M)$,\\
$ i_{X}$ : the interior product with respect to  $X
\in\mathfrak{X}(M)$,\\
$df$ : the exterior derivative  of $f\in \mathfrak{X}(M)$,\\
$ d_{L}:=[i_{L},d]$, $i_{L}$ being the interior derivative with
respect to a vector form $L$.

\par Elements  of  $\mathfrak{X}(\pi (M))$ will be called
$\pi$-vector fields and will be denoted by barred letters
$\overline{X} $. Tensor fields on $\pi^{-1}(TM)$ will be called
$\pi$-tensor fields. The fundamental $\pi$-vector field is the
$\pi$-vector field $\overline{\eta}$ defined by
$\overline{\eta}(u)=(u,u)$ for all $u\in \T M$.

We have the following short exact sequence of vector bundles,
relating the tangent bundle $T(\T M)$ and the pullback bundle
$\pi^{-1}(TM)$:\vspace{-0.1cm}
$$0\longrightarrow
 \pi^{-1}(TM)\stackrel{\gamma}\longrightarrow T(\T M)\stackrel{\rho}\longrightarrow
\pi^{-1}(TM)\longrightarrow 0 ,\vspace{-0.1cm}$$
 where the bundle morphisms $\rho$ and $\gamma$ are defined respectively by
$\rho := (\pi_{\T M},d\pi)$ and $\gamma (u,v):=j_{u}(v)$, where
$j_{u}$  is the natural isomorphism $j_{u}:T_{\pi_{M}(v)}M
\longrightarrow T_{u}(T_{\pi_{M}(v)}M)$. The vector $1$-form $J$ on
$TM$ defined by $J:=\gamma\circ\rho$ is called the natural almost
tangent structure of $T M$. The vertical vector field $\mathcal{C}$
on $TM$ defined by $\mathcal{C}:=\gamma\circ\overline{\eta} $ is
called the fundamental or the canonical (Liouville) vector field.
\par
Let $D$ be  a linear connection (or simply a connection) on the
pullback bundle $\pi^{-1}(TM)$.
 We associate with
$D$ the map \vspace{-0.1cm}
$$K:T \T M\longrightarrow \pi^{-1}(TM):X\longmapsto D_X \overline{\eta}
,\vspace{-0.1cm}$$ called the connection (or the deflection) map of
$D$. A tangent vector $X\in T_u (\T M)$ is said to be horizontal if
$K(X)=0$ . The vector space $H_u (\T M)= \{ X \in T_u (\T M) :
K(X)=0 \}$ of the horizontal vectors
 at $u \in  \T M$ is called the horizontal space to $M$ at $u$  .
   The connection $D$ is said to be regular if
\begin{equation}\label{direct sum}
T_u (\T M)=V_u (\T M)\oplus H_u (\T M) \qquad \forall u\in \T M .
\end{equation}
\par If $M$ is endowed with a regular connection, then the vector bundle
   maps
\begin{eqnarray*}
 \gamma &:& \pi^{-1}(T M)  \To V(\T M), \\
   \rho |_{H(\T M)}&:&H(\T M) \To \pi^{-1}(TM), \\
   K |_{V(\T M)}&:&V(\T M) \To \pi^{-1}(T M)
\end{eqnarray*}
 are vector bundle isomorphisms.
   Let us denote
 $\beta:=(\rho |_{H(\T M)})^{-1}$,
then \vspace{-0.2cm}
   \begin{align}\label{fh1}
    \rho\circ\beta = id_{\pi^{-1} (TM)}, \quad  \quad
       \beta\circ\rho =\left\{
                                \begin{array}{ll}
                                          id_{H(\T M)} & {\,\, on\,\,   H(\T M)} \\
                                         0 & {\,\, on \,\,   V(\T M)}
                                       \end{array}
                                     \right.\vspace{-0.2cm}
\end{align}
The map $\beta$ will be called the horizontal map of the connection
$D$.
\par According to the direct sum decomposition (\ref{direct
sum}), a regular connection $D$ gives rise to a horizontal projector
$h_{D}$ and a vertical projector $v_{D}$, given by
\begin{equation}\label{proj.}
h_{D}=\beta\circ\rho ,  \ \ \ \ \ \ \ \ \ \ \
v_{D}=I-\beta\circ\rho,
\end{equation}
where $I$ is the identity endomorphism on $T(TM)$: $I=id_{T(TM)}$.
\par
 The (classical)  torsion tensor $\textbf{T}$  of the connection
$D$ is defined by
$$\textbf{T}(X,Y)=D_X \rho Y-D_Y\rho X -\rho [X,Y] \quad
\forall\,X,Y\in \mathfrak{X} (\T M).$$ The horizontal ((h)h-) and
mixed ((h)hv-) torsion tensors, denoted by $Q $ and $ T $
respectively, are defined by \vspace{-0.2cm}
$$Q (\overline{X},\overline{Y})=\textbf{T}(\beta \overline{X}\beta \overline{Y}),
\, \,\, T(\overline{X},\overline{Y})=\textbf{T}(\gamma
\overline{X},\beta \overline{Y}) \quad \forall \,
\overline{X},\overline{Y}\in\mathfrak{X} (\pi (M)).\vspace{-0.2cm}$$
If $M$ is endowed with a metric $g$ on $\p$, we write
\begin{equation}\label{tor.g}
    T(\overline{X},\overline{Y},\overline{Z}):
=g(T(\overline{X},\overline{Y}),\overline{Z}).
\end{equation}
\par
The (classical) curvature tensor  $\textbf{K}$ of the connection $D$
is defined by
 $$ \textbf{K}(X,Y)\rho Z=-D_X D_Y \rho Z+D_Y D_X \rho Z+D_{[X,Y]}\rho Z
  \quad \forall\, X,Y, Z \in \mathfrak{X} (\T M).$$
The horizontal (h-), mixed (hv-) and vertical (v-) curvature
tensors, denoted by $R$, $P$ and $S$ respectively, are defined by
$$R(\overline{X},\overline{Y})\overline{Z}=\textbf{K}(\beta
\overline{X}\beta \overline{Y})\overline{Z},\quad
P(\overline{X},\overline{Y})\overline{Z}=\textbf{K}(\beta
\overline{X},\gamma \overline{Y})\overline{Z},\quad
S(\overline{X},\overline{Y})\overline{Z}=\textbf{K}(\gamma
\overline{X},\gamma \overline{Y})\overline{Z}.$$ The contracted
curvature tensors, denoted by $\widehat{R}$, $\widehat{P}$ and
$\widehat{S}$ respectively, are also known as the
 (v)h-, (v)hv- and (v)v-torsion tensors and are defined by
$$\widehat{R}(\overline{X},\overline{Y})={R}(\overline{X},\overline{Y})\overline{\eta},\quad
\widehat{P}(\overline{X},\overline{Y})={P}(\overline{X},\overline{Y})\overline{\eta},\quad
\widehat{S}(\overline{X},\overline{Y})={S}(\overline{X},\overline{Y})\overline{\eta}.$$
If $M$ is endowed with a metric $g$ on $\p$, we write
\begin{equation}\label{cur.g}
    R(\overline{X},\overline{Y},\overline{Z}, \overline{W}):
=g(R(\overline{X},\overline{Y})\overline{Z}, \overline{W}),\,
\cdots, \, S(\overline{X},\overline{Y},\overline{Z}, \overline{W}):
=g(S(\overline{X},\overline{Y})\overline{Z}, \overline{W}).
\end{equation}
\begin{equation}\label{.cur.g}
    \widehat{R}(\overline{X},\overline{Y},\overline{Z}):
=g(\widehat{R}(\overline{X},\overline{Y}), \overline{Z}),\cdots,\,
\widehat{S}(\overline{X},\overline{Y},\overline{Z}):
=g(\widehat{S}(\overline{X},\overline{Y}), \overline{Z}).
\end{equation}
\par We terminate this section
by some concepts and results concerning the Klein-Grifone approach
to intrinsic Finsler geometry. For more details, we refer to
\cite{r21}, \cite{r22} and \cite{r27}.
\par  A semispray  is a vector field $X$ on $TM$,
 $C^{\infty}$ on $\T M$, $C^{1}$ on $TM$, such that
$\rho\circ X = \overline{\eta}$. A semispray $X$ which is
homogeneous of degree $2$ in the directional argument
($[\mathcal{C},X]= X $) is called a spray.

\begin{prop}{\em{\cite{r27}}}\label{spray} Let $(M,L)$ be a Finsler manifold. The vector field
$G$ on $TM$ defined by $i_{G}\Omega =-dE$ is a spray, where
 $E:=\frac{1}{2}L^{2}$ is the energy function and $\Omega:=dd_{J}E$.
 Such a spray is called the canonical spray.
 \end{prop}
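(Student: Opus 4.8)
The plan is to follow the Klein--Grifone scheme, so the first task is to check that $\Omega:=dd_{J}E$ is a symplectic form on $\T M$. Writing $d_{J}E=i_{J}\,dE$ and differentiating, the ``$dy\wedge dx$''--part of $\Omega$ is governed by the vertical Hessian of $E$, that is, by the Finsler metric tensor $g$, which is non-degenerate by hypothesis; hence $\Omega$ is non-degenerate on $\T M$, and the equation $i_{G}\Omega=-dE$ has exactly one solution $G$ on $\T M$. The $2$-homogeneity established below, together with the regularity of $E=\tfrac12 L^{2}$ on all of $TM$, then shows that $G$ extends to a $C^{1}$ vector field on $TM$, so $G$ is a well-defined candidate.

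Next I would assemble the auxiliary identities that drive the argument. From $\rho\circ\gamma=0$ one gets $J\mathcal{C}=\gamma\rho\gamma\overline{\eta}=0$; since $E$ is positively homogeneous of degree $2$ in the directional argument, $\mathcal{L}_{\mathcal{C}}E=2E$, and since $\mathcal{L}_{\mathcal{C}}J=-J$, this gives $\mathcal{L}_{\mathcal{C}}(d_{J}E)=d_{J}E$ and hence $\mathcal{L}_{\mathcal{C}}\Omega=d(\mathcal{L}_{\mathcal{C}}d_{J}E)=\Omega$. Cartan's formula $i_{\mathcal{C}}\Omega=\mathcal{L}_{\mathcal{C}}(d_{J}E)-d(i_{\mathcal{C}}d_{J}E)$ combined with $i_{\mathcal{C}}d_{J}E=dE(J\mathcal{C})=0$ then yields the key relation $i_{\mathcal{C}}\Omega=d_{J}E$. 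I would also record that $i_{J}\Omega=0$: from $i_{J}d-d\,i_{J}=d_{J}$ one has $i_{J}\Omega=d_{J}(d_{J}E)+d(i_{J}d_{J}E)$, and both summands vanish, since $d_{J}^{2}=\tfrac12 d_{[J,J]}=0$ (the natural almost tangent structure is integrable, $[J,J]=0$) and $i_{J}^{2}=0$ on $1$-forms because $J^{2}=0$. Finally, for every $2$-form $\omega$ and vector field $X$ the elementary identity $i_{JX}\omega=i_{X}(i_{J}\omega)-i_{J}(i_{X}\omega)$ holds, as one checks by evaluating both sides on a pair of tangent vectors.

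With these in hand the conclusion is short. Applying the last identity with $X=G$ and $\omega=\Omega$ and using $i_{J}\Omega=0$ and $i_{G}\Omega=-dE$ gives $i_{JG}\Omega=-i_{J}(i_{G}\Omega)=i_{J}\,dE=d_{J}E=i_{\mathcal{C}}\Omega$; non-degeneracy of $\Omega$ forces $JG=\mathcal{C}$, i.e.\ $\gamma(\rho G)=\gamma\overline{\eta}$, and injectivity of $\gamma$ yields $\rho\circ G=\overline{\eta}$, so $G$ is a semispray. For homogeneity I would apply $\mathcal{L}_{\mathcal{C}}$ to $i_{G}\Omega=-dE$: the left-hand side equals $i_{[\mathcal{C},G]}\Omega+i_{G}(\mathcal{L}_{\mathcal{C}}\Omega)=i_{[\mathcal{C},G]}\Omega-dE$, while the right-hand side is $-d(\mathcal{L}_{\mathcal{C}}E)=-2\,dE$; comparing, $i_{[\mathcal{C},G]}\Omega=-dE=i_{G}\Omega$, so $[\mathcal{C},G]=G$ and $G$ is a spray. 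The steps I expect to cost the most effort are not these last lines but the preparatory identities---above all $i_{J}\Omega=0$, which rests on the integrability $[J,J]=0$, and $i_{\mathcal{C}}\Omega=d_{J}E$---and, conceptually, the non-degeneracy of $\Omega$, which is exactly the point where the Finsler hypothesis is used; the behaviour on the zero section is a minor technicality dealt with by homogeneity.
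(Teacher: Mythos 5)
Your argument is correct. Note that the paper itself offers no proof of this proposition: it is quoted from Klein--Voutier \cite{r27} (see also Grifone \cite{r21}), and your derivation is essentially the standard Klein--Grifone argument found there --- non-degeneracy of $\Omega=dd_JE$ from regularity of $E$, the identities $i_J\Omega=0$ (via $[J,J]=0$, $J^2=0$) and $i_{\mathcal{C}}\Omega=d_JE$ to get $JG=\mathcal{C}$, and then $\mathcal{L}_{\mathcal{C}}$ applied to $i_G\Omega=-dE$ together with $\mathcal{L}_{\mathcal{C}}\Omega=\Omega$ to get $[\mathcal{C},G]=G$. So you have correctly reconstructed the omitted proof rather than diverged from it; the only caveats are the minor ones you already flag (behaviour on the zero section, $C^1$ regularity of $G$ on $TM$).
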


A nonlinear connection on $M$ is a vector $1$-form $\Gamma$ on $TM$,
$C^{\infty}$ on $\T M$, $C^{0}$ on $TM$, such that
$$J \Gamma=J, \quad\quad \Gamma J=-J .$$
The horizontal and vertical projectors $h_{\Gamma}$\,  and
$v_{\Gamma}$ associated with $\Gamma$ are defined by
   $h_{\Gamma}:=\frac{1}{2} (I+\Gamma)$ and  $v_{\Gamma}:=\frac{1}{2}
 (I-\Gamma).$

\begin{thm} \label{th.9a} {\em{\cite{r22}}} On a Finsler manifold $(M,L)$, there exists a unique
conservative homogenous nonlinear  connection  with zero torsion. It
is given by\,{\em:} \vspace{-0.3cm} $$\Gamma =
[J,G],\vspace{-0.3cm}$$ where $G$ is the canonical spray.\\
 Such a nonlinear connection is called the canonical connection, the Barthel connection or the Cartan nonlinear connection
 associated with $(M,L)$.
\end{thm}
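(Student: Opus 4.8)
The statement has two halves, existence and uniqueness, and I would treat them separately, working throughout in the Fr\"olicher--Nijenhuis calculus of vector forms on $TM$; recall that $\Gamma$ is \emph{conservative} when $d_{h_\Gamma}E=0$, \emph{homogeneous} when $[\mathcal{C},\Gamma]=0$, and torsion-free when the Grifone torsion $t(\Gamma)=\tfrac{1}{2}[J,\Gamma]$ vanishes. For existence, the plan is to verify directly that $\Gamma:=[J,G]$, with $G$ the canonical spray of Proposition~\ref{spray}, has all four properties. That $\Gamma$ is a nonlinear connection, i.e.\ $J\Gamma=J$ and $\Gamma J=-J$, follows from $J^{2}=0$ and $JG=\mathcal{C}$ (the semispray condition $\rho G=\overline{\eta}$) after expanding $[J,G]$ by the derivation rules. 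Vanishing of the torsion comes from the graded Jacobi identity $2[J,[J,G]]=[[J,J],G]$, which is zero because the natural almost tangent structure is integrable ($[J,J]=0$). Homogeneity comes from the same identity together with $[\mathcal{C},J]=-J$ and the spray condition $[\mathcal{C},G]=G$, since $[\mathcal{C},[J,G]]=[[\mathcal{C},J],G]+[J,[\mathcal{C},G]]=-[J,G]+[J,G]=0$. For conservativeness I would combine the defining identity $i_{G}\Omega=-dE$ of the canonical spray, $\Omega=dd_{J}E$, with the commutation rule $[\mathcal{L}_{G},d_{J}]=d_{\mathcal{L}_{G}J}$ applied to $E$: on the one hand $\mathcal{L}_{G}d_{J}E=i_{G}\Omega+d\,i_{G}d_{J}E=-dE+d(2E)=dE$, using $i_{G}d_{J}E=dE(JG)=dE(\mathcal{C})=2E$; on the other hand $\mathcal{L}_{G}d_{J}E=d_{\mathcal{L}_{G}J}E=i_{\mathcal{L}_{G}J}\,dE=-i_{\Gamma}\,dE$, since $\mathcal{L}_{G}E=dE(G)=-\Omega(G,G)=0$ and $\mathcal{L}_{G}J=-[J,G]=-\Gamma$; comparing the two gives $i_{\Gamma}dE=-dE$, hence $d_{h_\Gamma}E=i_{h_\Gamma}dE=\tfrac{1}{2}\,i_{I+\Gamma}\,dE=0$.

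For uniqueness, let $\Gamma'$ be any conservative, homogeneous, torsion-free nonlinear connection. I would first invoke the structural fact (Grifone~\cite{r22}) that a torsion-free nonlinear connection is exactly one of the form $[J,S]$ with $S$ a semispray; concretely one puts $S:=h_{\Gamma'}\widetilde{G}$ for $\widetilde{G}$ the canonical spray and checks $[J,S]=\Gamma'$. Homogeneity of $\Gamma'$ then forces $S$ to be a spray: $[\mathcal{C},\Gamma']=0$ gives $[\mathcal{C},h_{\Gamma'}]=0$, so $[\mathcal{C},S]=h_{\Gamma'}[\mathcal{C},\widetilde{G}]=h_{\Gamma'}\widetilde{G}=S$. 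Running the conservativeness computation backwards now pins down $S$: since $S$ is $h_{\Gamma'}$-horizontal and $i_{h_{\Gamma'}}dE=0$ one has $\mathcal{L}_{S}E=dE(S)=0$, whence $\mathcal{L}_{S}d_{J}E=d_{\mathcal{L}_{S}J}E=-i_{\Gamma'}\,dE=dE$ (using $\mathcal{L}_{S}J=-\Gamma'$ and $\Gamma'=2h_{\Gamma'}-I$), and therefore $i_{S}\Omega=\mathcal{L}_{S}d_{J}E-d\,i_{S}d_{J}E=dE-d(2E)=-dE$, with $i_{S}d_{J}E=dE(JS)=dE(\mathcal{C})=2E$. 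By the nondegeneracy of $\Omega$ that underlies Proposition~\ref{spray}, $i_{S}\Omega=-dE=i_{G}\Omega$ forces $S=G$, and hence $\Gamma'=[J,S]=[J,G]=\Gamma$.

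I expect the genuine difficulties to be two. First, the structural lemma identifying torsion-free nonlinear connections with those generated by a semispray demands care about regularity along the zero section: $h_{\Gamma'}\widetilde{G}$ is a priori only continuous there, so one must use the hypotheses to see that $S$ is in fact $C^{1}$, and one must also check that the spray generating $\Gamma'$ is unique --- two semisprays give the same $[J,S]$ only when their difference is a vertical field whose components are pulled back from $M$, and homogeneity then kills that difference. Second, one must be scrupulous with the Fr\"olicher--Nijenhuis sign conventions in force here, $[J,G]=-\mathcal{L}_{G}J$ and $[\mathcal{C},J]=-J$, on which the clean cancellations in the homogeneity and torsion computations depend. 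Apart from that everything is formal almost tangent geometry; the one place where the Finsler structure genuinely enters is the conservativeness computation, through $i_{G}\Omega=-dE$ together with the identity $[\mathcal{L}_{G},d_{J}]=d_{\mathcal{L}_{G}J}$, and that is the conceptual heart of the proof.
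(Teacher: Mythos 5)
The paper does not prove this theorem at all: it is quoted verbatim from Grifone \cite{r22} and used as a black box, so there is no in-paper argument to compare yours with. Your reconstruction is essentially Grifone's own proof in the Fr\"olicher--Nijenhuis calculus, and the existence half is correct as written: the Jacobi identity with $[J,J]=0$ kills the torsion, $[\mathcal{C},J]=-J$ and $[\mathcal{C},G]=G$ give homogeneity, and the computation of $\mathcal{L}_{G}d_{J}E$ in two ways, using $i_{G}\Omega=-dE$ and $[\mathcal{L}_{G},d_{J}]=d_{\mathcal{L}_{G}J}$, gives $i_{\Gamma}dE=-dE$, hence conservativeness (note only that verifying the connection axioms $J\Gamma=J$, $\Gamma J=-J$ also uses $[J,J]=0$ and $[\mathcal{C},J]=-J$, not just $J^{2}=0$ and $JG=\mathcal{C}$). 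In the uniqueness half, the one step to phrase more carefully is the reconstruction $\Gamma'=[J,S]$ with $S:=h_{\Gamma'}\widetilde{G}$: this identity is \emph{not} a consequence of torsion-freeness alone. In local coordinates, if $\Gamma'$ has coefficients $N^{j}_{i}$, then $[J,h_{\Gamma'}\widetilde{G}]$ has coefficients $\tfrac{1}{2}\bigl(N^{j}_{k}+y^{i}\,\partial N^{j}_{i}/\partial y^{k}\bigr)$, so one needs the symmetry of $\partial N^{j}_{i}/\partial y^{k}$ (zero torsion) \emph{and} Euler's identity (homogeneity) to recover $N^{j}_{k}$; a torsion-free non-homogeneous connection is generated by some semispray, but generally not by $h_{\Gamma'}\widetilde{G}$. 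Since homogeneity is among your hypotheses this costs nothing --- the correct statement is that for a torsion-free \emph{homogeneous} $\Gamma'$ one has $\Gamma'=[J,h_{\Gamma'}\widetilde{G}]$ with $h_{\Gamma'}\widetilde{G}$ a spray --- but your wording attributes the concrete recipe to torsion-freeness alone, which is the only real inaccuracy. The remainder (conservativeness forcing $i_{S}\Omega=-dE$ and nondegeneracy of $\Omega$ forcing $S=G$), and your remarks on smoothness at the zero section and on the vertical ambiguity of generating semisprays, are exactly the right points of care.
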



\Section{Fundamental identities associated with \vspace{5pt}regular
connections}

 In this section, we  investigate the most important
general properties concerning the torsion  and  curvature tensor
fields associated with regular connections  on $\pi^{-1}(TM)$. These
properties will play a key role throughout the whole paper.

\begin{defn}\emph{\cite{r92}} Let $D$ be a regular connection on
$\pi^{-1}(TM)$ with horizontal map~$\beta$.\\
\textbf{--} The semispray  $S=\beta\circ\overline{\eta}$ will be called the semispray associated with $D$.\\
\textbf{--} The nonlinear connection $\Gamma=2\beta\circ\rho-I$ will
be called the nonlinear connection associated with $D$ and will be
denoted by $\Gamma_{D}$.
\end{defn}

\begin{prop}\emph{\cite{r92}}\label{eqv.} Let  ${D}$ be a regular connection
on $\pi^{-1}(TM)$ whose connection map is $K$ and whose horizontal
map is $\beta$. Then, the following assertions are equivalent:
\vspace{-0.1cm}
 \begin{description}
    \item[(a)] The (h)hv-torsion  ${T}$ of ${D}$ has the property that
    ${T}( \overline{X},\overline{\eta})=0$,

    \item[(b)] $K=\gamma^{-1}$ on  $V(TM)$,

    \item[(c)] ${\Gamma} :=\beta\circ\rho - \gamma\circ K$ is a nonlinear
    connection on $M$.\vspace{-0.2cm}
 \end{description}
 \par Consequently,  if any one of the above assertions
    holds, then ${\Gamma}$
    coincides with the nonlinear connection associated with $D$\emph{:} ${\Gamma}=\Gamma_{D}=
    2\beta\circ\rho-I$, and in  this case $h_{{\Gamma}}=h_{D}=\beta\circ\rho$
    and  $\,v_{{\Gamma}}=v_{D}=\gamma\circ K$.
\end{prop}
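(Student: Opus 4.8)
The plan is to prove the chain of equivalences (a) $\Rightarrow$ (b) $\Rightarrow$ (c) $\Rightarrow$ (a), and then establish the concluding identifications. The key computational fact underlying everything is the expression of $\mathbf{T}$ restricted to a vertical and a horizontal argument. Writing a general tangent vector via the decomposition $X = \beta\rho X + \gamma K X$ coming from (\ref{proj.}) (together with regularity), one computes $T(\overline X,\overline Y) = \mathbf{T}(\gamma\overline X,\beta\overline Y) = D_{\gamma\overline X}\rho(\beta\overline Y) - D_{\beta\overline Y}\rho(\gamma\overline X) - \rho[\gamma\overline X,\beta\overline Y]$. Since $\rho\circ\gamma = 0$ and $\rho\circ\beta = \mathrm{id}$ by (\ref{fh1}), the middle term drops and this collapses to $D_{\gamma\overline X}\overline Y - \rho[\gamma\overline X,\beta\overline Y]$. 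Specializing $\overline Y = \overline\eta$ and using $\beta\circ\overline\eta = S$ (the associated semispray), one gets $T(\overline X,\overline\eta) = D_{\gamma\overline X}\overline\eta - \rho[\gamma\overline X,S] = K(\gamma\overline X) - \rho[\gamma\overline X,S]$, where the first equality is the definition of the connection map $K$. A standard identity of the Klein--Grifone calculus gives $\rho[\gamma\overline X, S] = \overline X$ whenever $S$ is a semispray (this is essentially $[J,S]$ acting like the identity on the relevant piece, or directly $\rho\circ[\,\cdot\,,S]\circ\gamma = \mathrm{id}$ on $\pi^{-1}(TM)$). Hence $T(\overline X,\overline\eta) = K(\gamma\overline X) - \overline X$, so (a) holds iff $K\circ\gamma = \mathrm{id}$ on $\pi^{-1}(TM)$, which is exactly (b) after transporting along the isomorphism $\gamma: \pi^{-1}(TM)\to V(TM)$.

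For (b) $\Rightarrow$ (c), assume $K = \gamma^{-1}$ on $V(TM)$, equivalently $\gamma\circ K = \mathrm{id}$ on $V(TM)$ and $\gamma\circ K$ kills $H(TM)$ (the latter because $K$ vanishes on horizontal vectors by definition). Then $\gamma\circ K = v_D$ is precisely the vertical projector, and $\beta\circ\rho = h_D$ the horizontal projector, so $\Gamma := \beta\circ\rho - \gamma\circ K = h_D - v_D = (h_D + v_D) - 2v_D = I - 2v_D = 2h_D - I = 2\beta\circ\rho - I$. One then checks the two defining properties of a nonlinear connection, $J\Gamma = J$ and $\Gamma J = -J$: using $J = \gamma\circ\rho$, $\rho\circ\gamma = 0$, $\rho\circ\beta = \mathrm{id}$, and $\rho\circ(\gamma K) = 0$, the computation $J\Gamma = \gamma\rho(2\beta\rho - I) = \gamma(2\rho - \rho) = \gamma\rho = J$ is immediate, and $\Gamma J = (2\beta\rho - I)\gamma\rho = (2\beta\rho\gamma - \gamma)\rho = -\gamma\rho = -J$ follows since $\rho\gamma = 0$. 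So $\Gamma$ is a nonlinear connection, and we have incidentally already shown $\Gamma = \Gamma_D$.

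For (c) $\Rightarrow$ (a): if $\Gamma = \beta\rho - \gamma K$ is a nonlinear connection, then $\Gamma J = -J$. Compute $\Gamma J = (\beta\rho - \gamma K)(\gamma\rho) = \beta\rho\gamma\rho - \gamma K\gamma\rho = -\gamma(K\gamma)\rho$ using $\rho\gamma = 0$. Setting this equal to $-J = -\gamma\rho$ and cancelling the injective $\gamma$ on the left and the surjective $\rho$ on the right (valid on the appropriate sub/quotient bundles) yields $K\circ\gamma = \mathrm{id}$, i.e. (b); and we showed above that (b) is equivalent to (a). This closes the cycle. Finally, the concluding statement: once any of (a)--(c) holds we have $\Gamma = 2\beta\rho - I = \Gamma_D$ from the (b) $\Rightarrow$ (c) step, and then $h_\Gamma = \frac12(I+\Gamma) = \frac12(I + 2\beta\rho - I) = \beta\rho = h_D$ and $v_\Gamma = \frac12(I-\Gamma) = I - \beta\rho = v_D$, with $v_D = \gamma\circ K$ by (b). The main obstacle is purely bookkeeping: keeping straight which maps are mutually inverse on which sub-bundle under the regularity decomposition (\ref{direct sum}), and correctly invoking the Klein--Grifone identity $\rho[\gamma\overline X,S] = \overline X$ for a semispray $S$; once those are in hand every step is a short algebraic manipulation with $\rho$, $\gamma$, $\beta$, $K$ and $J$.
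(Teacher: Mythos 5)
Your proposal is correct, and it follows essentially the route the paper presupposes: the proposition is quoted from \cite{r92} without a printed proof here, but your key steps are exactly the tools this paper itself uses, namely the computation $T(\overline{X},\overline{\eta})=K(\gamma\overline{X})-\rho[\gamma\overline{X},\beta\overline{\eta}]$ combined with the identity $\overline{X}=\rho[\gamma\overline{X},\beta\overline{\eta}]$ (invoked in the proof of Proposition \ref{pp.3}(c) with the same citation), the identification $\gamma\circ K=I-\beta\circ\rho$ on the regular decomposition, and the $J\Gamma=J$, $\Gamma J=-J$ manipulations with $\rho\circ\gamma=0$, $\rho\circ\beta=id$. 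The cycle (a)$\Rightarrow$(b)$\Rightarrow$(c)$\Rightarrow$(a) and the concluding identifications $h_{\Gamma}=\beta\circ\rho$, $v_{\Gamma}=\gamma\circ K$ are all sound as written.
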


The following two lemmas are fundamental for subsequent
use.\vspace{-0.2cm}
\begin{lem}\label{bracket} Let $D$ be a regular connection on $\p$
whose (h)hv-torsion tensor $T$ has the property that
$\,T(\overline{X}, \overline{\eta})=0$. Then, we
have{\em:}\vspace{-0.1cm}
   \begin{description}
 \item[(a)] $[\beta \overline{X},\beta \overline{Y}]=
     \gamma\widehat{R}(\overline{X},\overline{Y})
     + \beta(D_{\beta \overline{X}}\overline{Y}-
     D_{\beta \overline{Y}}\overline{X}-Q(\overline{X},\overline{Y})),$

    \item[(b)] $[\gamma \overline{X},\beta \overline{Y}]=-
     \gamma(\widehat{P}(\overline{Y},\overline{X})+D_
     {\beta \overline{Y}}\overline{X})
     +\beta( D_{\gamma \overline{X}}\overline{Y}-T(\overline{X},\overline{Y})),$

   \item[(c)] $[\gamma \overline{X},\gamma \overline{Y}]=
     \gamma(D_{\gamma \overline{X}}\overline{Y}-
     D_{\gamma \overline{Y}}\overline{X}+\widehat{S}(\overline{X},\overline{Y}))$.

     \end{description}
\end{lem}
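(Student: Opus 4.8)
The plan is to split each of the three brackets along the direct-sum decomposition (\ref{direct sum}), writing $[\,\cdot\,,\,\cdot\,]=h_{D}[\,\cdot\,,\,\cdot\,]+v_{D}[\,\cdot\,,\,\cdot\,]$, and to evaluate the horizontal and vertical projections separately. Since the hypothesis $T(\overline{X},\overline{\eta})=0$ is assumed, Proposition \ref{eqv.} applies and supplies the facts I will use repeatedly: $h_{D}=\beta\circ\rho$, $v_{D}=\gamma\circ K$, and $K(\gamma\overline{Z})=\overline{Z}$ for all $\overline{Z}\in\mathfrak{X}(\pi(M))$. Together with the structural relations $\rho\circ\beta=\mathrm{id}$, $\rho\circ\gamma=0$, the fact that $K$ annihilates horizontal vectors ($K(\beta\overline{Z})=0$), and the definitions $K(X)=D_{X}\overline{\eta}$, $\textbf{T}(X,Y)=D_{X}\rho Y-D_{Y}\rho X-\rho[X,Y]$ and $\textbf{K}(X,Y)\overline{\eta}=-D_{X}D_{Y}\overline{\eta}+D_{Y}D_{X}\overline{\eta}+D_{[X,Y]}\overline{\eta}$, everything reduces to substitution.

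For the horizontal projections I would compute $h_{D}[U,W]=\beta(\rho[U,W])$ and read off $\rho[U,W]$ from the torsion identity. Taking $(U,W)=(\beta\overline{X},\beta\overline{Y})$ and using $\rho\beta=\mathrm{id}$ gives $\rho[\beta\overline{X},\beta\overline{Y}]=D_{\beta\overline{X}}\overline{Y}-D_{\beta\overline{Y}}\overline{X}-Q(\overline{X},\overline{Y})$; taking $(U,W)=(\gamma\overline{X},\beta\overline{Y})$ and using in addition $\rho\gamma=0$ (so that $D_{\beta\overline{Y}}\rho\gamma\overline{X}=0$) gives $\rho[\gamma\overline{X},\beta\overline{Y}]=D_{\gamma\overline{X}}\overline{Y}-T(\overline{X},\overline{Y})$. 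The bracket $[\gamma\overline{X},\gamma\overline{Y}]$ has no horizontal component, because the vertical distribution $V(TM)=\gamma(\pi^{-1}(TM))$ is involutive, hence $[\gamma\overline{X},\gamma\overline{Y}]$ is vertical and, by $v_{D}=\gamma\circ K$, equals $\gamma(K[\gamma\overline{X},\gamma\overline{Y}])$.

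For the vertical projections I would use $v_{D}[U,W]=\gamma(K[U,W])$ with $K[U,W]=D_{[U,W]}\overline{\eta}$, isolating $D_{[U,W]}\overline{\eta}$ from the curvature identity by transposing the two second-order terms, so that $\textbf{K}(U,W)\overline{\eta}$ --- a contracted curvature tensor --- appears. For $(U,W)=(\beta\overline{X},\beta\overline{Y})$ both second-order terms vanish since $D_{\beta\overline{Z}}\overline{\eta}=K(\beta\overline{Z})=0$, whence $K[\beta\overline{X},\beta\overline{Y}]=\textbf{K}(\beta\overline{X},\beta\overline{Y})\overline{\eta}=\widehat{R}(\overline{X},\overline{Y})$; combined with the horizontal computation this is (a). For $(U,W)=(\gamma\overline{X},\gamma\overline{Y})$, using $D_{\gamma\overline{Z}}\overline{\eta}=\overline{Z}$ the second-order terms become $D_{\gamma\overline{X}}\overline{Y}$ and $D_{\gamma\overline{Y}}\overline{X}$, giving $K[\gamma\overline{X},\gamma\overline{Y}]=D_{\gamma\overline{X}}\overline{Y}-D_{\gamma\overline{Y}}\overline{X}+\widehat{S}(\overline{X},\overline{Y})$, which is (c). For (b) the same scheme applied to $\textbf{K}(\beta\overline{Y},\gamma\overline{X})\overline{\eta}=\widehat{P}(\overline{Y},\overline{X})$ --- note the transposed arguments --- uses $D_{\gamma\overline{X}}\overline{\eta}=\overline{X}$, $D_{\beta\overline{Y}}\overline{\eta}=0$ and $[\beta\overline{Y},\gamma\overline{X}]=-[\gamma\overline{X},\beta\overline{Y}]$ to give $K[\gamma\overline{X},\beta\overline{Y}]=-\widehat{P}(\overline{Y},\overline{X})-D_{\beta\overline{Y}}\overline{X}$; together with the horizontal computation this is (b).

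The argument is essentially mechanical; the one point that needs care --- the ``hard part'', such as it is --- is the sign and argument-order bookkeeping in (b), where the mixed curvature enters with its arguments transposed and an overall sign that comes partly from the curvature convention and partly from $[\beta\overline{Y},\gamma\overline{X}]=-[\gamma\overline{X},\beta\overline{Y}]$. It is also important to invoke Proposition \ref{eqv.} exactly where the hypothesis $T(\overline{X},\overline{\eta})=0$ is genuinely needed, namely for $h_{D}=\beta\circ\rho$, $v_{D}=\gamma\circ K$ and $K(\gamma\overline{Z})=\overline{Z}$ on $V(TM)$; without it the vertical projections would not take this clean form in terms of $\widehat{R}$, $\widehat{P}$, $\widehat{S}$.
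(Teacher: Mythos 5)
Your proposal is correct and follows essentially the same route as the paper: decompose each bracket by $I=\beta\circ\rho+\gamma\circ K$ (justified by Proposition \ref{eqv.} under the hypothesis $T(\overline{X},\overline{\eta})=0$), read off $\rho[\cdot,\cdot]$ from the classical torsion and $K[\cdot,\cdot]=D_{[\cdot,\cdot]}\overline{\eta}$ from the classical curvature contracted with $\overline{\eta}$, with the same sign bookkeeping in (b). The only cosmetic difference is that you make explicit the involutivity of $V(\T M)$ for the horizontal part of (c), which the paper leaves implicit in its ``proved analogously'' remark.
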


\begin{proof}
 It should first be noted that, as $D$ is regular and
$T(\overline{X},\overline{\eta})=0$, we have $h=\beta\circ\rho$,
$\,v=\gamma\circ K$, $K\circ\gamma=id_{\cp} $, by Proposition
\ref{eqv.}. Then, we have
\begin{eqnarray*}
  [\beta \overline{X},\beta \overline{Y}] &=& \gamma
(K
  [\beta \overline{X},\beta \overline{Y}])
  +\beta(\rho [\beta \overline{X},\beta \overline{Y}]) =\gamma
(D_{
  [\beta \overline{X},\beta \overline{Y}]}\overline{\eta})+
  \beta(\rho [\beta \overline{X},\beta \overline{Y}]) \\
  &=&
     \gamma(\widehat{R}(\overline{X},\overline{Y})-D_
     {\beta \overline{Y}}D_
     {\beta\overline{X}}\overline{\eta}+D_
     {\beta\overline{X}}D_
     {\beta \overline{Y}}\overline{\eta})+\beta( D_{\beta
\overline{X}}\overline{Y}-D_{\beta
\overline{Y}}\overline{X}-Q(\overline{X},\overline{Y}))\\
&=&  \gamma\widehat{R}(\overline{X},\overline{Y})
     + \beta(D_{\beta \overline{X}}\overline{Y}-
     D_{\beta \overline{Y}}\overline{X}-Q(\overline{X},\overline{Y})).
\end{eqnarray*}

On the other hand,
\begin{eqnarray*}
  [\gamma \overline{X},\beta \overline{Y}] &=& \gamma
(K
  [\gamma \overline{X},\beta \overline{Y}])
  +\beta(\rho [\gamma \overline{X},\beta \overline{Y}]) = \gamma
(D_{
  [\gamma \overline{X},\beta \overline{Y}]}\overline{\eta})+
  \beta(\rho [\gamma \overline{X},\beta \overline{Y}]) \\
  &=&-
     \gamma(\widehat{P}(\overline{Y},\overline{X})+D_
     {\beta \overline{Y}}D_
     {\gamma\overline{X}}\overline{\eta}-D_
     {\gamma\overline{X}}D_
     {\beta \overline{Y}}\overline{\eta})+\beta( D_{\gamma
\overline{X}}\overline{Y}-T(\overline{X},\overline{Y}))\\
&=&-
     \gamma(\widehat{P}(\overline{Y},\overline{X})+D_
     {\beta \overline{Y}}\overline{X})
     +\beta( D_{\gamma \overline{X}}\overline{Y}-T(\overline{X},\overline{Y})).
\end{eqnarray*}

The last identity can be proved analogously.
\end{proof}

\begin{lem}\label{lem.2} Let $D$ be a linear connection on $\pi^{-1}(TM)$
with \emph{(}classical\emph{)} torsion  tensor $\textbf{T}$ and
\emph{(}classical\emph{)} curvature tensor $\textbf{K}$. For every
$X,Y,Z \in \cpp$, $\overline{Z},\overline{W}\in \cp$, we
have\,\emph{:}\vspace{-0.2cm}
\begin{description}
    \item[(a)]$\mathfrak{S}_{X,Y,Z}\{\textbf{K}(X,Y)\rho Z+D_{X}\textbf{T}(Y,Z)
    +\textbf{T}(X,[Y,Z])\}=0$,

 \item[(b)] $\mathfrak{S}_{X,Y,Z}\{D_{Z}\textbf{K}(X,Y)\overline{W}-\textbf{K}(X,Y)D_{Z}\overline{W}
    -\textbf{K}([X,Y],Z)\overline{W}\}=0$.
\end{description}
If  $\pi^{-1}(TM)$ is equipped with  a metric $g$,
then\vspace{-0.2cm}
\begin{description}
    \item[(c)]$g(\textbf{K}(X,Y)\overline{Z},\overline{W})+g(\textbf{K}(X,Y)\overline{W},\overline{Z})= \mathfrak{U}_{X,Y}\{
(D_{X}(D_{Y}g))(\overline{W},\overline{Z})\}-(D_{[X,Y]}g)(\overline{W},\overline{Z})$.
\end{description}
\end{lem}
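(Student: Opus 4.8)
\medskip\noindent\textbf{Proof plan.} All three identities are the intrinsic, pullback-bundle versions of the two Bianchi identities (for (a) and (b)) and of the curvature--non-metricity relation (for (c)) for a linear connection on a vector bundle over the base manifold $\T M$; in particular no regularity hypothesis on $D$ is needed, and each follows by a direct --- if somewhat lengthy --- computation from the definitions of $\textbf{T}$ and $\textbf{K}$ together with the Jacobi identity on $\mathfrak{X}(\T M)$. For (a) I would substitute $\textbf{T}(Y,Z)=D_{Y}\rho Z-D_{Z}\rho Y-\rho[Y,Z]$ into both $D_{X}\textbf{T}(Y,Z)$ and $\textbf{T}(X,[Y,Z])$, and the definition of $\textbf{K}$ into $\textbf{K}(X,Y)\rho Z$. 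After the evident cancellations inside the bracket $\{\,\cdot\,\}$ the surviving terms are only second covariant derivatives $D_{\bullet}D_{\bullet}\rho(\cdot)$, bracket-covariant derivatives $D_{[\bullet,\bullet]}\rho(\cdot)$, and the single term $-\rho[X,[Y,Z]]$. Passing to the cyclic sum $\mathfrak{S}_{X,Y,Z}$, the second-order terms cancel in pairs and the $D_{[\bullet,\bullet]}\rho(\cdot)$ terms cancel in pairs, so that the whole cyclic sum reduces to $-\rho\,\mathfrak{S}_{X,Y,Z}[X,[Y,Z]]$, which vanishes by the Jacobi identity. This gives (a).

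For (b), the first observation is that the right-hand side of the defining formula $\textbf{K}(X,Y)\rho Z=-D_{X}D_{Y}\rho Z+D_{Y}D_{X}\rho Z+D_{[X,Y]}\rho Z$ involves $Z$ only through the $\pi$-vector field $\rho Z$, and $\rho:\mathfrak{X}(\T M)\To\cp$ is surjective; hence the curvature extends to an endomorphism-valued $2$-form on $\pi^{-1}(TM)$ with $\textbf{K}(X,Y)\overline{W}=-D_{X}D_{Y}\overline{W}+D_{Y}D_{X}\overline{W}+D_{[X,Y]}\overline{W}$ for \emph{every} $\overline{W}\in\cp$. I would substitute this into each of $D_{Z}\textbf{K}(X,Y)\overline{W}$, $\textbf{K}(X,Y)D_{Z}\overline{W}$ and $\textbf{K}([X,Y],Z)\overline{W}$ and expand; every resulting summand is a triple covariant derivative of $\overline{W}$, either along $X,Y,Z$ or along the brackets $[X,Y]$ and $[[X,Y],Z]$. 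One then checks that the two summands $D_{Z}D_{[X,Y]}\overline{W}$ and $D_{[X,Y]}D_{Z}\overline{W}$ cancel, that the six genuine triple derivatives cancel in pairs once the cyclic sum is formed, and that the remaining contribution $-\mathfrak{S}_{X,Y,Z}D_{[[X,Y],Z]}\overline{W}$ vanishes by the Jacobi identity; hence the cyclic sum is $0$, which is (b).

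For (c), I would expand $g(\textbf{K}(X,Y)\overline{Z},\overline{W})+g(\textbf{K}(X,Y)\overline{W},\overline{Z})$ using the curvature formula above, and then iteratively eliminate covariant derivatives by means of $(D_{X}g)(\overline{U},\overline{V})=X\!\cdot\!g(\overline{U},\overline{V})-g(D_{X}\overline{U},\overline{V})-g(\overline{U},D_{X}\overline{V})$, applied first to remove the outer derivative and a second time --- with $g$ replaced by $D_{Y}g$ --- to remove the inner one. This produces the second-derivative term $(D_{X}(D_{Y}g))(\overline{W},\overline{Z})$, a ``base'' term $XY\!\cdot\!g(\overline{Z},\overline{W})$, and a package of six lower-order terms built from $g$, $Dg$ and single covariant derivatives of $\overline{Z}$ and $\overline{W}$. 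The base terms coming from $D_{X}D_{Y}$ and $D_{Y}D_{X}$ combine, through $-[X,Y]\!\cdot\!g(\overline{Z},\overline{W})$, with the term coming from $D_{[X,Y]}$ to leave exactly $-(D_{[X,Y]}g)(\overline{W},\overline{Z})$, and the second-derivative terms assemble into $\mathfrak{U}_{X,Y}\{(D_{X}(D_{Y}g))(\overline{W},\overline{Z})\}$. The one step that must be checked with care --- and which I regard as the main, though still routine, obstacle --- is that the package of six lower-order terms is \emph{symmetric} under the interchange $X\leftrightarrow Y$, so that it is annihilated by $\mathfrak{U}_{X,Y}$; a direct inspection shows these six terms split into three pairs swapped by $X\leftrightarrow Y$ (using the symmetry of $g$), which completes the proof of (c).
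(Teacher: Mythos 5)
Your proposal is correct, and for part (c) it is essentially the paper's own computation run in the opposite direction: the paper differentiates $g(\overline{W},\overline{Z})$ twice, compares with the direct expansion of $[X,Y]\cdot g(\overline{W},\overline{Z})$, and identifies $[D_X,D_Y]-D_{[X,Y]}=-\textbf{K}(X,Y)$, which produces exactly the cancellation pattern (second-derivative terms assembling into $\mathfrak{U}_{X,Y}\{(D_X(D_Yg))(\overline{W},\overline{Z})\}$, the bracket terms leaving $-(D_{[X,Y]}g)(\overline{W},\overline{Z})$, and the mixed terms killed by the alternation) that you describe. For (a) and (b) the paper gives no proof at all (``We prove (c) only''), and your expansions of $\textbf{T}$ and $\textbf{K}$ followed by the Jacobi identity are precisely the standard Bianchi-type arguments the authors implicitly invoke, including the needed remark that $\textbf{K}(X,Y)$ extends tensorially to arbitrary $\overline{W}\in\cp$.
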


\begin{proof} We prove (c) only.
$$ X\cdot g(\overline{W},\overline{Z} )=(D_{X}g)(\overline{W},\overline{Z})
+g(D_{X}\overline{W},\overline{Z})+g(\overline{W},D_{X}\overline{Z}).$$
From which, we obtain
\begin{eqnarray*}
   X\cdot ( Y\cdot g(\overline{W},\overline{Z} ))
   &=&  X\cdot((D_{Y}g)(\overline{W},\overline{Z}))
   + (D_{X}g)(D_{Y}\overline{W},\overline{Z})+
   (D_{X}g)(\overline{W},D_{Y}\overline{Z})
   \\
   &&+g(D_{X}D_{Y}\overline{W},\overline{Z})
   +g(D_{Y}\overline{W},D_{X}\overline{Z})
   +g(D_{X}\overline{W},D_{Y}\overline{Z})\\
   && +g(\overline{W},D_{X}D_{Y}\overline{Z}),
  \end{eqnarray*}
with similar expression for $Y\cdot ( X\cdot
g(\overline{W},\overline{Z} ))$. Consequently,
\begin{eqnarray*}
   [X,Y]\cdot g(\overline{W},\overline{Z} )&=&
    \mathfrak{U}_{X,Y}\{ X\cdot((D_{Y}g)(\overline{W},\overline{Z}))
   + (D_{X}g)(D_{Y}\overline{W},\overline{Z})+
   (D_{X}g)(\overline{W},D_{Y}\overline{Z})\}
   \\
   &&+g([D_{X},D_{Y}]\overline{W},\overline{Z})
      +g(\overline{W},[D_{X},D_{Y}]\overline{Z}).
  \end{eqnarray*}
 On the other hand, we have
\begin{eqnarray*}
   [X,Y]\cdot g(\overline{W},\overline{Z} )&=&(D_{[X,Y]}g)(\overline{W},\overline{Z})
+g(D_{[X,Y]}\overline{W},\overline{Z})+g(\overline{W},D_{[X,Y]}\overline{Z})
  \end{eqnarray*}
The result follows from the above two equations.
\end{proof}

\begin{prop}\label{pp.1} Let $D$ be a regular connection on $\p$
whose (h)hv-torsion tensor $T$ has the property that
$\,T(\overline{X}, \overline{\eta})=0$. Then, we
have{\em:}\vspace{-0.2cm}
\begin{description}
\item[(a)] $
S(\overline{X},\overline{Y})\overline{Z} =
 (D_{\gamma \overline{Y}}T)(\overline{X},\overline{Z})-
 (D_{\gamma \overline{X}}T)(\overline{Y},\overline{Z})$\\
 ${\,\qquad\qquad\,\,\,}+T(\overline{X},T(\overline{Y},\overline{Z}))
 -T(\overline{Y},T(\overline{X},\overline{Z}))+T( \widehat{S}(\overline{X},
 \overline{Y}),\overline{Z})$,

\item[(b)]$P(\overline{X},\overline{Y})\overline{Z}-P(\overline{Z},\overline{Y})\overline{X}=
(D_{\beta \overline{Z}}T)(\overline{Y},\overline{X})-(D_{\beta
\overline{X}}T)(\overline{Y},\overline{Z})-(D_{\gamma
\overline{Y}}Q)(\overline{X},\overline{Z})$\\
$ { \qquad\qquad\qquad \qquad\qquad\ \ }-T(\overline{Y},
Q(\overline{X},\overline{Z}))-T(\widehat{P}(\overline{Z},\overline{Y}),\overline{X})+
T(\widehat{P}(\overline{X},\overline{Y}),\overline{Z})$
\\ $ { \qquad\qquad\qquad \qquad\qquad\ \ }-Q(\overline{Z},
T(\overline{Y},\overline{X})) +Q(\overline{X},
T(\overline{Y},\overline{Z})),$

\item[(c)]$\mathfrak{S}_{\overline{X},\overline{Y},\overline{Z}}
\{R(\overline{X},
\overline{Y})\overline{Z}-T(\widehat{R}(\overline{X},\overline{Y}),\overline{Z})\}=
\mathfrak{S}_{\overline{X},\overline{Y},\overline{Z}}
\{Q(\overline{X},Q(\overline{Y},\overline{Z}))-(D_{\beta
\overline{X}}Q)(\overline{Y},\overline{Z})\}$.
\end{description}
\end{prop}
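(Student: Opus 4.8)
The plan is to derive each of the three identities (a), (b), (c) by expanding the defining formula for the relevant curvature tensor in terms of the connection $D$, and then eliminating all brackets of the form $[\gamma\overline{X},\gamma\overline{Y}]$, $[\gamma\overline{X},\beta\overline{Y}]$, $[\beta\overline{X},\beta\overline{Y}]$ using Lemma \ref{bracket}. The hypothesis $T(\overline{X},\overline{\eta})=0$ is exactly what licenses Lemma \ref{bracket} and the simplifications $h=\beta\circ\rho$, $v=\gamma\circ K$, $K\circ\gamma=\mathrm{id}$ coming from Proposition \ref{eqv.}, so these will be invoked at the outset in each case. Throughout I will use that $\rho\circ\gamma=0$, $\rho\circ\beta=\mathrm{id}$, $D_{X}\overline{\eta}=K(X)$ (so that $D_{\gamma\overline{X}}\overline{\eta}=\overline{X}$ and $D_{\beta\overline{X}}\overline{\eta}=0$), and that the $\pi$-tensors $T$, $Q$ arise by applying $\textbf{T}$ to mixed, resp. horizontal, pairs while $\widehat{R},\widehat{P},\widehat{S}$ are the $\overline{\eta}$-contractions of $R,P,S$.

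For part (a): start from $S(\overline{X},\overline{Y})\overline{Z}=\textbf{K}(\gamma\overline{X},\gamma\overline{Y})\overline{Z}$ and expand via the definition of $\textbf{K}$, namely $\textbf{K}(\gamma\overline{X},\gamma\overline{Y})\rho W=-D_{\gamma\overline{X}}D_{\gamma\overline{Y}}\rho W+D_{\gamma\overline{Y}}D_{\gamma\overline{X}}\rho W+D_{[\gamma\overline{X},\gamma\overline{Y}]}\rho W$; here one must first realize $\overline{Z}$ as $\rho W$ for a suitable $W$ — the clean choice is to use Lemma \ref{lem.2}(a), the Bianchi-type identity, specialized to $X=\gamma\overline{X}$, $Y=\gamma\overline{Y}$, $Z=\beta\overline{Z}$. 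In that identity $\textbf{K}(\gamma\overline{X},\gamma\overline{Y})\rho(\beta\overline{Z})=S(\overline{X},\overline{Y})\overline{Z}$, the cyclic terms $\textbf{K}(\gamma\overline{Y},\beta\overline{Z})\rho(\gamma\overline{X})$ and $\textbf{K}(\beta\overline{Z},\gamma\overline{X})\rho(\gamma\overline{Y})$ vanish since $\rho\circ\gamma=0$, and the remaining terms are covariant derivatives of $\textbf{T}$ applied to pairs whose brackets are computed by Lemma \ref{bracket}(c). Identifying $\textbf{T}(\gamma\overline{X},\beta\overline{Y})=T(\overline{X},\overline{Y})$ and $\textbf{T}(\gamma\overline{X},\gamma\overline{Y})=\gamma\widehat{S}(\overline{X},\overline{Y})$ composed appropriately, and collecting, should produce exactly the stated formula; the $T(\widehat{S}(\overline{X},\overline{Y}),\overline{Z})$ term is the footprint of the bracket $[\gamma\overline{X},\gamma\overline{Y}]$.

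For parts (b) and (c) the strategy is the same, only with different substitutions into Lemma \ref{lem.2}(a): for (b) take $X=\beta\overline{X}$, $Y=\gamma\overline{Y}$, $Z=\beta\overline{Z}$, so the cyclic sum of $\textbf{K}$-terms becomes $P(\overline{X},\overline{Y})\overline{Z}-P(\overline{Z},\overline{Y})\overline{X}$ (the $\textbf{K}(\beta\overline{Z},\beta\overline{X})\rho(\gamma\overline{Y})$ term dies by $\rho\circ\gamma=0$, and the two surviving $\textbf{K}$-terms are mixed ones with a sign from the cyclic rotation and from the order of arguments in $P$), while the torsion terms unpack using brackets $[\gamma\overline{Y},\beta\overline{Z}]$, $[\beta\overline{X},\gamma\overline{Y}]$, $[\beta\overline{X},\beta\overline{Z}]$ from Lemma \ref{bracket}(a),(b) — the bracket $[\beta\overline{X},\beta\overline{Z}]$ is what injects the $\widehat{R}$-free $Q$-terms and the $T(\widehat{P}(\cdot,\cdot),\cdot)$ terms. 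For (c) take all three of $X,Y,Z$ horizontal, $\beta\overline{X},\beta\overline{Y},\beta\overline{Z}$; then each $\textbf{K}$-term is an $R$-term, giving the cyclic sum $\mathfrak{S}\{R(\overline{X},\overline{Y})\overline{Z}\}$, each $\textbf{T}$ of a horizontal pair is $Q$, and the brackets $[\beta\overline{Y},\beta\overline{Z}]$ etc. from Lemma \ref{bracket}(a) contribute the $\gamma\widehat{R}$ pieces which, when hit by $\textbf{T}(\beta\overline{X},\cdot)=\textbf{T}(\beta\overline{X},\gamma(\cdot))=T(\cdot,\cdot)$-type evaluations, become the $-T(\widehat{R}(\overline{X},\overline{Y}),\overline{Z})$ correction, while $D_{\beta\overline{X}}\textbf{T}(\beta\overline{Y},\beta\overline{Z})$ reorganizes into $(D_{\beta\overline{X}}Q)(\overline{Y},\overline{Z})$ plus $Q$-of-$Q$ terms via the Leibniz rule and Lemma \ref{bracket}(a) again.

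The main obstacle I anticipate is purely bookkeeping: matching signs and argument-orders when (i) rotating the cyclic sum $\mathfrak{S}_{X,Y,Z}$ so that the three $\textbf{K}$-contributions line up as $P(\overline{X},\overline{Y})\overline{Z}-P(\overline{Z},\overline{Y})\overline{X}$ in (b) (which is not cyclic but antisymmetric in the outer arguments, so one must carefully track which rotated term gets killed by $\rho\circ\gamma=0$ and reconcile the residual sign), and (ii) splitting each $D_{X}\textbf{T}(Y,Z)$ into a covariant-derivative-of-$\pi$-tensor piece plus lower-order pieces — since $\beta$ and $\gamma$ are not $D$-parallel in general, $D_{X}(\beta\overline{Y})\ne\beta(D_{X}\overline{Y})$, and the defect terms (governed by $K$, i.e. by $\widehat{R},\widehat{P}$) must be shown to cancel against the bracket contributions. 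Provided one consistently uses $D_{\beta\overline{X}}\overline{\eta}=0$, $D_{\gamma\overline{X}}\overline{\eta}=\overline{X}$, $K\circ\beta=0$, and the three cases of Lemma \ref{bracket}, the cancellations are forced and each identity drops out; I would write (a) in full and remark that (b) and (c) follow by the same method with the indicated substitutions.
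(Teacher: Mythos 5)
Your proposal is correct and follows essentially the same route as the paper, whose proof of this proposition is precisely to substitute the appropriate vertical/horizontal lifts into the Bianchi-type identity of Lemma \ref{lem.2}(a) and evaluate all brackets via Lemma \ref{bracket}, using $\rho\circ\gamma=0$, $\rho\circ\beta=\mathrm{id}$ and the regularity consequences of Proposition \ref{eqv.}. Two cosmetic slips only: $\textbf{T}(\gamma \overline{X},\gamma \overline{Y})=0$ rather than anything involving $\widehat{S}$ (the term $T(\widehat{S}(\overline{X},\overline{Y}),\overline{Z})$ enters solely through $\textbf{T}(\beta\overline{Z},[\gamma\overline{X},\gamma\overline{Y}])$, as you also note), and in (b) the $T(\widehat{P}(\cdot,\cdot),\cdot)$ terms arise from the mixed brackets $[\gamma\overline{Y},\beta\overline{Z}]$ and $[\beta\overline{X},\gamma\overline{Y}]$, while $[\beta\overline{X},\beta\overline{Z}]$ contributes only $Q$-terms (its $\gamma\widehat{R}$ part is killed by $\textbf{T}(\gamma\overline{Y},\gamma\,\cdot)=0$).
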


\begin{proof} Follows from Lemma
\ref{lem.2}{(a)} and  Lemma \ref{bracket}.
\end{proof}

\begin{prop}\label{pp.2} Let $D$ be a regular connection on $\p$
whose (h)hv-torsion tensor $T$ has the property that
$\,T(\overline{X}, \overline{\eta})=0$. Then, we
have{\em:}\vspace{-0.2cm}
\begin{description}
\item[(a)] $\mathfrak{S}_{\overline{X},\overline{Y},\overline{Z}}\{(D_{\gamma\overline{X}}S)
(\overline{Y},\overline{Z},\overline{W})-S(\widehat{S}(\overline{X},\overline{Y}),
\overline{Z})\overline{W}\}=0$.

\item[(b)] $(D_{\beta\overline{Z}}S)(\overline{X},\overline{Y},\overline{W}
)-(D_{\gamma
\overline{X}}P)(\overline{Z},\overline{Y},\overline{W})+ (D_{\gamma
\overline{Y}}P)(\overline{Z},\overline{X}, \overline{W})=$\\$=
P(T(\overline{X},\overline{Z}),\overline{Y})\overline{W}
-P(T(\overline{Y},\overline{Z}),\overline{X})\overline{W}
-P(\overline{Z},\widehat{S}(\overline{X},\overline{Y}))\overline{W}$\\
$+S(\widehat{P}(\overline{Z},\overline{X}),\overline{Y})\overline{W}
-S(\widehat{P}(\overline{Z},\overline{Y}),\overline{X})\overline{W}
$.

\item[(c)]$(D_{\gamma\overline{X}}R)(\overline{Y},\overline{Z},\overline{W})
   + (D_{\beta\overline{Y}}P)(\overline{Z},\overline{X},\overline{W})-
   (D_{\beta   \overline{Z}}P)(\overline{Y},\overline{X},\overline{W})=$\\
$=P(\overline{Z},\widehat{P}(\overline{Y},\overline{X}))\overline{W}
-P(\overline{Y},\widehat{P}(\overline{Z},\overline{X}))\overline{W}
- P(Q(\overline{Y},\overline{Z}),\overline{X})\overline{W}\\
+R(T(\overline{X},\overline{Z}),\overline{Y})\overline{W}
-R(T(\overline{X},\overline{Y}),\overline{Z})\overline{W}+
S(\widehat{R}(\overline{Y},\overline{Z}),\overline{X})\overline{W}$.

\item[(d)] $\mathfrak{S}_{\overline{X},\overline{Y},\overline{Z}}
\{(D_{\beta \overline{X}}R)(\overline{Y},
\overline{Z},\overline{W})+P(\overline{X},\widehat{R}(\overline{Y},\overline{Z}))\overline{W}+R(Q(\overline{X},\overline{Y}),\overline{Z})\overline{W}\}=0$.
\end{description}
\end{prop}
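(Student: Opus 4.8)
The strategy is to imitate the proof of Proposition~\ref{pp.1}, but with the second (differential) Bianchi identity Lemma~\ref{lem.2}(b) playing the role that the first one Lemma~\ref{lem.2}(a) played there. Given $\pi$-vector fields $\overline{X},\overline{Y},\overline{Z},\overline{W}$, one applies Lemma~\ref{lem.2}(b) with the three arguments $X,Y,Z\in\cpp$ taken to be $\beta$- or $\gamma$-lifts of $\overline{X},\overline{Y},\overline{Z}$; the four items arise from the four admissible combinations of lift types, namely (a) from $(X,Y,Z)=(\gamma\overline{X},\gamma\overline{Y},\gamma\overline{Z})$, (b) from $(\gamma\overline{X},\gamma\overline{Y},\beta\overline{Z})$, (c) from $(\gamma\overline{X},\beta\overline{Y},\beta\overline{Z})$, and (d) from $(\beta\overline{X},\beta\overline{Y},\beta\overline{Z})$. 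In items (a) and (d) the cyclic sum $\mathfrak{S}_{X,Y,Z}$ of Lemma~\ref{lem.2}(b) remains a genuine cyclic sum over $\overline{X},\overline{Y},\overline{Z}$, while in (b) and (c) its three summands have three different shapes and together produce the non-symmetric identity to be proved.

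For each item I would proceed in four steps. First, expand $\mathbf{K}(X,Y)\overline{W}$ through the definitions of $R$, $P$, $S$ and the antisymmetry $\mathbf{K}(A,B)=-\mathbf{K}(B,A)$; for instance $\mathbf{K}(\gamma\overline{X},\beta\overline{Y})\overline{W}=-P(\overline{Y},\overline{X})\overline{W}$. Second, rewrite each Lie bracket $[X,Y]$ by Lemma~\ref{bracket} --- this is legitimate since $D$ is regular with $T(\overline{X},\overline{\eta})=0$, so that $h=\beta\circ\rho$ and $v=\gamma\circ K$ by Proposition~\ref{eqv.}. The bracket formulas contribute exactly the contracted-curvature and torsion terms $\widehat{R},\widehat{P},\widehat{S},Q,T$ that occur on the right-hand sides, together with extra terms of the form $\mathbf{K}(\beta\overline{U},\cdot)$ and $\mathbf{K}(\gamma\overline{U},\cdot)$ in which $\overline{U}$ is itself one of the covariant derivatives $D_{\beta\overline{X}}\overline{Y}$, $D_{\gamma\overline{X}}\overline{Y}$, etc. Third, distribute $D_Z$ over $\mathbf{K}(X,Y)\overline{W}$ by the Leibniz rule: for $\mathcal{R}\in\{R,P,S\}$, $D_Z\big(\mathcal{R}(\overline{X},\overline{Y})\overline{W}\big)=(D_Z\mathcal{R})(\overline{X},\overline{Y},\overline{W})+\mathcal{R}(D_Z\overline{X},\overline{Y})\overline{W}+\mathcal{R}(\overline{X},D_Z\overline{Y})\overline{W}+\mathcal{R}(\overline{X},\overline{Y})D_Z\overline{W}$; the final term is cancelled by the term $-\mathbf{K}(X,Y)D_Z\overline{W}$ already present in Lemma~\ref{lem.2}(b), and the desired covariant derivatives of $R$, $P$, $S$ emerge.

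It then remains to dispose of the two ``correction'' terms $\mathcal{R}(D_Z\overline{X},\overline{Y})\overline{W}$ and $\mathcal{R}(\overline{X},D_Z\overline{Y})\overline{W}$ coming from each cyclic position. In the mixed items (b) and (c) these cancel pairwise against the extra terms issuing from Lemma~\ref{bracket}; for example in (b) the term $-P(\overline{Z},D_{\gamma\overline{X}}\overline{Y})\overline{W}$ produced by $D_{\gamma\overline{X}}\big(P(\overline{Z},\overline{Y})\overline{W}\big)$ is exactly cancelled by the corresponding piece of $-\mathbf{K}([\gamma\overline{X},\gamma\overline{Y}],\beta\overline{Z})\overline{W}$, and similarly for the remaining corrections, leaving exactly identities (b) and (c). In the pure items (a) and (d) one again cancels the $D_\beta$- (resp.\ $D_\gamma$-) type corrections against the bracket terms, after which the corrections that still survive cancel among themselves once one invokes the antisymmetry of $S$ (resp.\ of $R$) in its first two arguments and reindexes the cyclic sum; what remains is precisely $\mathfrak{S}_{\overline{X},\overline{Y},\overline{Z}}\{(D_{\gamma\overline{X}}S)(\overline{Y},\overline{Z},\overline{W})-S(\widehat{S}(\overline{X},\overline{Y}),\overline{Z})\overline{W}\}=0$ in (a), and its horizontal analogue in (d).

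The only real difficulty I anticipate is organizational: each identity expands into roughly a dozen terms, and forcing the ``junk'' to vanish demands strict consistency in the sign conventions --- the antisymmetry of $\mathbf{K}$, the signs appearing in Lemma~\ref{bracket}, and the correct unfolding of the cyclic sum in Lemma~\ref{lem.2}(b). There is no conceptual obstruction, the result being forced once the right lifts are inserted; but careless bookkeeping leaves stray $D_{\gamma\overline{X}}\overline{Y}$- or $D_{\beta\overline{Z}}\overline{Y}$-type terms, so the crux is to pair each correction term with the bracket term that annihilates it.
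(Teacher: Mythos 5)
Your proposal is correct and is exactly the paper's argument: the paper's proof is the one-line remark that the result ``follows from Lemma \ref{lem.2}(b) and Lemma \ref{bracket}'', i.e.\ the second Bianchi identity specialized to the four lift combinations you list, with the brackets rewritten via Lemma \ref{bracket}. Your choice of lifts for each item, the Leibniz-rule expansion, and the cancellation of the correction terms against the bracket contributions (using antisymmetry of $S$, resp.\ $R$, in the pure cases) all check out.
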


\begin{proof}
 Follows from Lemma \ref{lem.2}{(b)} and Lemma \ref{bracket}.
\end{proof}


\Section{Fundamental tensors associated with the Cartan
\vspace{7pt}\\
connection}

  We shall use the results obtained in $\S 2$ to investigate the fundamental properties of the
   most important tensors associated with Cartan connection.

\begin{thm}\emph{\cite{r92} }\label{th.1}Let $(M,L)$ be a Finsler manifold and  $g$  the Finsler metric
defined by $L$. There exists a unique regular connection $\nabla$ on
$\pi^{-1}(TM)$ such that\vspace{-0.2cm}
\begin{description}
  \item[(i)]  $\nabla$ is  metric\,{\em:} $\nabla g=0$,

  \item[(ii)] The (h)h-torsion of $\nabla$ vanishes\,{\em:} $Q=0
  $,
  \item[(iii)] The (h)hv-torsion $T$ of $\nabla$ satisfies \,
  $g(T(\overline{X},\overline{Y}), \overline{Z})=g(T(\overline{X},\overline{Z}),\overline{Y})$.\vspace{-0.2cm}
\end{description}
 \par
 Such a connection is called the Cartan
connection associated with  the Finsler manifold $(M,L)$.
\end{thm}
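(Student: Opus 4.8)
The plan is to reduce the problem to a determination of the connection coefficients in the horizontal and vertical directions, using the metric condition \textbf{(i)} together with the torsion conditions \textbf{(ii)} and \textbf{(iii)}, and the normalization forced by Proposition \ref{eqv.}. First I would observe that condition \textbf{(iii)}, applied with $\overline Y=\overline\eta$, together with the symmetry of the (h)hv-torsion it encodes and the fact that $T(\overline\eta,\cdot)$ must be controlled, gives $T(\overline X,\overline\eta)=0$; hence by Proposition \ref{eqv.} we are in the situation where $K=\gamma^{-1}$ on $V(TM)$, the associated nonlinear connection is $\Gamma_\nabla=2\beta\circ\rho-I$, and $h_\nabla=\beta\circ\rho$, $v_\nabla=\gamma\circ K$. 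This means a regular connection $\nabla$ satisfying the three conditions is completely determined once we know $\nabla_{\beta\overline X}\overline Y$ and $\nabla_{\gamma\overline X}\overline Y$ for all $\overline X,\overline Y\in\cp$.

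Next I would derive the explicit \emph{Christoffel-type formulas} for these two pieces by the classical Koszul trick, adapted to the pullback bundle. For the vertical coefficients: write out $(\gamma\overline X)\cdot g(\overline Y,\overline Z)$, $(\gamma\overline Y)\cdot g(\overline Z,\overline X)$ and $(\gamma\overline Z)\cdot g(\overline X,\overline Y)$ using $\nabla g=0$, take the alternating sum, and use Lemma \ref{bracket}(c) to express the brackets $[\gamma\overline X,\gamma\overline Y]$ in terms of $\nabla$ on vertical arguments and $\widehat S$; since $\widehat S$ is the contracted v-curvature and $T(\overline X,\overline\eta)=0$ forces the relevant symmetry, one solves for $g(\nabla_{\gamma\overline X}\overline Y,\overline Z)$. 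For the horizontal coefficients one does the same with $\beta\overline X,\beta\overline Y,\beta\overline Z$, using Lemma \ref{bracket}(a) for $[\beta\overline X,\beta\overline Y]$, the vanishing $Q=0$ from \textbf{(ii)}, condition \textbf{(iii)} to handle the mixed terms that appear, and $\widehat R$ (which, when paired against $\overline\eta$ via $g$, is antisymmetric and drops out of the symmetric solve). This yields a closed formula for both sets of coefficients purely in terms of $g$, $L$, the Barthel connection (i.e.\ $\beta$) and their derivatives. Uniqueness is then immediate: any $\nabla$ with the three properties must have exactly these coefficients.

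For existence, I would turn the formulas around: \emph{define} $\nabla_{\beta\overline X}\overline Y$ and $\nabla_{\gamma\overline X}\overline Y$ by the Koszul expressions just obtained, extend to all of $\cpp$ by $\nabla_X\overline Y:=\nabla_{h_D X}\overline Y+\nabla_{v_D X}\overline Y$ with $h_D=\beta\circ\rho$, $v_D=I-\beta\circ\rho$ coming from the Barthel connection of Theorem \ref{th.9a}, and verify directly that this is an $\Real$-linear connection on $\pi^{-1}(TM)$ that is regular, and that it satisfies \textbf{(i)}, \textbf{(ii)}, \textbf{(iii)}. Checking the connection axioms ($\f$-linearity in $X$, Leibniz in $\overline Y$) and the three defining properties is where the bulk of the computation sits; the metric identity in particular requires reinserting the Koszul formula into $(\nabla_X g)(\overline Y,\overline Z)$ and watching the curvature-type remainder terms cancel against one another by the antisymmetries of $\widehat R,\widehat S$ and by condition \textbf{(iii)}. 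The main obstacle I anticipate is precisely this bookkeeping: ensuring that the ``anomalous'' terms $\gamma\widehat R$, $\gamma\widehat S$, $\gamma\widehat P$ produced by Lemma \ref{bracket} when one commutes $\nabla$ past brackets are consistently absorbed, so that the symmetric part of the Koszul solve is well defined and the resulting $\nabla$ genuinely has vanishing $Q$ and the stated symmetry of $T$. Everything else is routine, and — as remarked in the introduction — reduces in local coordinates to the classical Cartan Christoffel symbols $\Gamma^i_{jk}$ and $C^i_{jk}$.
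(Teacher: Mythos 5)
Your overall strategy---determining $\nabla$ by a Koszul-type formula from metricity plus the two torsion axioms, then reversing the formula for existence---is the right one, and in fact the only trace of the proof recorded in this paper (which cites \cite{r92} for Theorem \ref{th.1} rather than proving it) is precisely such a formula, equation (\ref{c1eq.r}), asserting that a metric connection is completely determined by its classical torsion $\textbf{T}$. But your write-up has two genuine gaps. First, the opening step is unjustified: setting $\overline{Y}=\overline{\eta}$ in axiom (iii) gives only $g(T(\overline{X},\overline{\eta}),\overline{Z})=g(T(\overline{X},\overline{Z}),\overline{\eta})$, which is not obviously zero. In the paper the vanishing $T(\overline{X},\overline{\eta})=0$ is Proposition \ref{pp.3}(c), and its proof requires the full strength of metricity through (\ref{c1eq.r}) together with the nontrivial identity $\overline{X}=\rho\,[\gamma\overline{X},\beta\overline{\eta}]$; so you cannot invoke Proposition \ref{eqv.} and Lemma \ref{bracket} at the outset the way you do---the fact you need to enter that machinery is itself one of the things to be proved.

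Second, and more seriously, you treat the horizontal map $\beta$ as given data, namely the Barthel horizontal map. But $\beta$ is part of the unknown: the horizontal subbundle of a regular connection is the kernel of its own deflection map $K X=\nabla_{X}\overline{\eta}$, so two candidate connections satisfying (i)--(iii) could a priori induce different horizontal maps, and then your ``closed formulas in terms of $g$, $L$ and $\beta$'' would be formulas relative to different decompositions---uniqueness is not ``immediate''. The missing step is exactly the content of Theorem \ref{c.ba.} (also from \cite{r92}): one must show that axioms (i)--(iii) force the associated nonlinear connection $\Gamma_{\nabla}=2\beta\circ\rho-I$ to be conservative, homogeneous and torsion-free, and then invoke the uniqueness part of Theorem \ref{th.9a} to conclude that $\beta$ is the Barthel horizontal map; only after that does the Koszul solve pin down $\nabla$, and only then may one legitimately define $\nabla$ by those formulas relative to the Barthel decomposition and verify regularity and (i)--(iii) for existence. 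A minor further point: routing the solve through Lemma \ref{bracket} needlessly injects the unknown contractions $\widehat{R}$, $\widehat{S}$, $\widehat{P}$ into the right-hand side, and your claim that they cancel is not established; deriving the Koszul identity directly from $\nabla g=0$ and the classical torsion, as in (\ref{c1eq.r}), avoids this entirely, since the only torsion components that then appear are exactly those controlled by (ii) and (iii).
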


\begin{thm}\emph{\cite{r92}}\label{c.ba.}  The nonlinear connection associated with
the Cartan connection $\nabla$ coincides with the Barthel
connection{\em:} $\Gamma_{\nabla}=[J,G]$.
\end{thm}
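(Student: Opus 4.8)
The plan is to pin down $\Gamma_{\nabla}=2\beta\circ\rho-I$ by the characterization of the Barthel connection in Theorem \ref{th.9a}: since $[J,G]$ is the \emph{unique} conservative, homogeneous, torsion-free nonlinear connection of $(M,L)$, it suffices to show that $\Gamma_{\nabla}$ has these four properties. Two of them cost almost nothing. That $\Gamma_{\nabla}$ is a nonlinear connection follows from Proposition \ref{eqv.} once one knows that the $(h)hv$-torsion of the Cartan connection satisfies $T(\overline{X},\overline{\eta})=0$; this is a basic property of $\nabla$ (vanishing of the $v$-deflection), coming from metricity, condition (iii) of Theorem \ref{th.1}, and the $0$-homogeneity of $g$: indeed $g(T(\overline{X},\overline{\eta}),\overline{Z})=g(T(\overline{X},\overline{Z}),\overline{\eta})$, whose right-hand side vanishes because contracting the (symmetric) Cartan tensor with $\overline{\eta}$ is killed by Euler's relation applied to $g$. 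Proposition \ref{eqv.} then also gives $h_{\Gamma_{\nabla}}=\beta\circ\rho$, $v_{\Gamma_{\nabla}}=\gamma\circ K$ and $K\circ\gamma=\mathrm{id}$. Conservativeness is immediate from $\nabla g=0$: for every $\overline{X}$,
$$(\beta\overline{X})\cdot E=\tfrac12(\nabla_{\beta\overline{X}}g)(\overline{\eta},\overline{\eta})+g(\nabla_{\beta\overline{X}}\overline{\eta},\overline{\eta})=g(K\beta\overline{X},\overline{\eta})=0,$$
so $d_{h_{\Gamma_{\nabla}}}E=0$.

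Before attacking homogeneity and vanishing torsion directly, I would show that the semispray $S=\beta\circ\overline{\eta}$ associated with $\nabla$ is exactly the canonical spray $G$; this supplies the bridge to the Barthel connection. Using $\nabla g=0$ and $K\circ\gamma=\mathrm{id}$ one first gets $d_{J}E(X)=g(\rho X,\overline{\eta})$, hence
$$\Omega(X,Y)=dd_{J}E(X,Y)=g(\textbf{T}(X,Y),\overline{\eta})+g(\rho Y,KX)-g(\rho X,KY).$$
Putting $X=S$ and using $KS=K\beta\overline{\eta}=0$ together with $\textbf{T}(S,Y)=0$ — which follows by splitting $Y$ into its $h$- and $v$-parts, since $\textbf{T}(\beta\overline{\eta},\beta\,\cdot\,)=Q(\overline{\eta},\,\cdot\,)=0$ and $\textbf{T}(\beta\overline{\eta},\gamma\,\cdot\,)=-T(\,\cdot\,,\overline{\eta})=0$ — we obtain $\Omega(S,Y)=-g(KY,\overline{\eta})=-dE(Y)$, i.e. $i_{S}\Omega=-dE$. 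By the uniqueness underlying Proposition \ref{spray} (here $\Omega$ is nondegenerate), $S=G$; in particular the semispray associated with $\nabla$ is a spray.

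It remains to see that $\Gamma_{\nabla}$ is homogeneous and torsion-free, and here Lemma \ref{bracket} is the main tool. Expanding the Grifone torsion $t=\tfrac12[J,\Gamma_{\nabla}]$ on a pair of horizontal lifts via Lemma \ref{bracket} gives $t(\beta\overline{X},\beta\overline{Y})=\gamma\big(\widehat{P}(\overline{X},\overline{Y})-\widehat{P}(\overline{Y},\overline{X})+Q(\overline{X},\overline{Y})\big)$, which, since $Q=0$, vanishes iff $\widehat{P}$ is symmetric; likewise $[\mathcal{C},\Gamma_{\nabla}]\beta\overline{Y}=-2\gamma\widehat{P}(\overline{Y},\overline{\eta})$ (using $K\beta=0$, $K\gamma=\mathrm{id}$ and $T(\overline{\eta},\overline{\eta})=0$), so homogeneity is equivalent to $\widehat{P}(\,\cdot\,,\overline{\eta})=0$. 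Thus everything reduces to the two fundamental identities for the $hv$-curvature of the Cartan connection: $\widehat{P}$ is symmetric and $\widehat{P}(\,\cdot\,,\overline{\eta})=0$, which I would extract from Proposition \ref{pp.1}(b) by setting an argument equal to $\overline{\eta}$ and using $Q=0$, $T(\,\cdot\,,\overline{\eta})=0$ and the symmetry of $T$. Once torsion-freeness is in hand one may finish either by Theorem \ref{th.9a} or, more directly, by noting that a torsion-free nonlinear connection equals the Grifone connection $[J,S]$ of its associated semispray, so that $\Gamma_{\nabla}=[J,S]=[J,G]$ by the previous paragraph.

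The step I expect to be the real obstacle is the pair of $hv$-curvature facts above. The Bianchi-type identities of Section 2 are purely algebraic consequences of the structure equations, and one checks that they collapse to tautologies as soon as an argument is replaced by $\overline{\eta}$; so extracting $\widehat{P}(\,\cdot\,,\overline{\eta})=0$ and the symmetry of $\widehat{P}$ genuinely requires the homogeneity carried by the Finsler structure — equivalently, the input that the canonical spray is a spray. Concretely, this means the identification $S=G$ and the vanishing of the homogeneity/torsion defects have to be interlocked rather than run independently, and getting that interlocking clean is the delicate point.
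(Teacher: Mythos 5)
Your overall architecture is sound, and the parts you do carry out are correct: the conservativity computation, the identification $i_{S}\Omega=-dE$ (hence $S=\beta\circ\overline{\eta}=G$ by nondegeneracy of $\Omega$), and the reduction, via Lemma \ref{bracket}, of torsion-freeness and homogeneity of $\Gamma_{\nabla}=2\beta\circ\rho-I$ to the two facts that $\widehat{P}$ is symmetric and $\widehat{P}(\cdot\,,\overline{\eta})=0$. (For the record, the paper itself gives no proof of Theorem \ref{c.ba.} — it is imported from \cite{r92} — so the comparison is against the machinery the paper does develop.) The genuine gap is exactly where you suspect it: those two facts about $\widehat{P}$ are never established. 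Your proposed extraction from Proposition \ref{pp.1}(b) alone, by substituting $\overline{\eta}$, does fail: with $Q=0$, setting $\overline{Z}=\overline{\eta}$ there gives $\widehat{P}(\overline{X},\overline{Y})-P(\overline{\eta},\overline{Y})\overline{X}=(\nabla_{\beta\overline{\eta}}T)(\overline{Y},\overline{X})-T(\widehat{P}(\overline{\eta},\overline{Y}),\overline{X})$, which still contains the unknowns $P(\overline{\eta},\overline{Y})\overline{X}$ and $\widehat{P}(\overline{\eta},\overline{Y})$, and the further substitution $\overline{X}=\overline{\eta}$ collapses to $0=0$. So, as written, the proof is incomplete at its decisive step, and the "interlocking with $S=G$" you gesture at in the last paragraph is not worked out.

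The missing ingredient is available in the paper and does not require homogeneity of the canonical spray, contrary to your closing diagnosis: one must feed in metricity a second time. Lemma \ref{lem.2}(c) with $X=\beta\overline{X}$, $Y=\gamma\overline{Y}$ and $\nabla g=0$ gives the skewness $P(\overline{X},\overline{Y},\overline{Z},\overline{W})=-P(\overline{X},\overline{Y},\overline{W},\overline{Z})$; combining this with the identity of Proposition \ref{pp.1}(b) by the Christoffel-type trick (cyclic permutation in $\overline{X},\overline{Z},\overline{W}$, add two of the resulting equations, subtract the third, using Proposition \ref{pp.3}(a),(b),(f)) yields the closed formula of Theorem \ref{.thm1}(c) for $P(\overline{X},\overline{Y},\overline{Z},\overline{W})$ in terms of $\nabla T$ and $T$-contractions of $\widehat{P}$. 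Contracting that formula with $\overline{\eta}$ kills all $T$-terms (since $T$ is totally symmetric, $T(\cdot\,,\overline{\eta})=0$ and $K\circ\beta=0$) and gives $\widehat{P}(\overline{\eta},\cdot)=0$ and $\widehat{P}=(\nabla_{\beta\overline{\eta}}T)$, hence the symmetry of $\widehat{P}$ — exactly Theorem \ref{.thm1}(d),(e),(f) — after which your argument closes via Theorem \ref{th.9a}. A smaller gloss to repair: $T(\overline{X},\overline{\eta})=0$ is not just "Euler's relation applied to $g$"; intrinsically it is Proposition \ref{pp.3}(c), proved from the Koszul-type formula (\ref{c1eq.r}), and it must be secured before Proposition \ref{eqv.}, Lemma \ref{bracket} and Proposition \ref{pp.1} may be invoked at all.
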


\begin{prop}\label{pp.3} The (h)hv-torsion $T$ of the Cartan connection has the  properties\,:\vspace{-0.2cm}
\begin{description}
 \item[(a)] $T(\overline{X},\overline{Y},\overline{Z})=T(\overline{X},\overline{Z},\overline{Y})$,

 \item[(b)] $(\nabla_{W}T)(\overline{X},\overline{Y},\overline{Z})=g((\nabla_{W}T)(\overline{X},\overline{Y}),\overline{Z})=
 g((\nabla_{W}T)(\overline{X},\overline{Z}),\overline{Y})$,

\item[(c)] $T(\overline{X},\overline{\eta})=0$,

 \item[(d)] $ (\nabla_{\gamma \overline{X}}T)(\overline{Y},\overline{Z})=
 (\nabla_{\gamma \overline{Y}}T)(\overline{X},\overline{Z})$,

 \item[(e)]  $(\nabla_{\gamma\overline{\eta}}T)(\overline{X},\overline{Y})=-T(\overline{X},\overline{Y})$,

 \item[(f)] $T$ is totally symmetric.
\end{description}
\end{prop}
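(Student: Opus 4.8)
The plan is to establish the six items more or less in the listed order, deriving each from the defining properties (i)--(iii) of the Cartan connection in Theorem \ref{th.1} together with the general identities of \S2. First, item (a) is immediate: by definition (\ref{tor.g}), $T(\overline{X},\overline{Y},\overline{Z})=g(T(\overline{X},\overline{Y}),\overline{Z})$, and property (iii) of Theorem \ref{th.1} says exactly $g(T(\overline{X},\overline{Y}),\overline{Z})=g(T(\overline{X},\overline{Z}),\overline{Y})$, which is $T(\overline{X},\overline{Z},\overline{Y})$. For item (b), I would differentiate the relation in (a) covariantly along $\beta\overline{W}$ (or an arbitrary $W\in\cpp$), using $\nabla g=0$ from property (i) to commute $\nabla_{W}$ past the metric $g$; since $\nabla_W$ is a derivation and $g$ is parallel, $W\cdot g(T(\overline{X},\overline{Y}),\overline{Z})=g((\nabla_W T)(\overline{X},\overline{Y}),\overline{Z})+\cdots$, and comparing the two sides of the differentiated identity (a) yields $g((\nabla_{W}T)(\overline{X},\overline{Y}),\overline{Z})=g((\nabla_{W}T)(\overline{X},\overline{Z}),\overline{Y})$, which is (b).

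For item (c), the key input is that the nonlinear connection associated with $\nabla$ is the Barthel connection (Theorem \ref{c.ba.}), which is homogeneous; equivalently, I would argue that $\nabla_{\gamma\overline{\eta}}\overline{\eta}=0$ (the deflection vanishes on the Liouville field, since $K(\mathcal{C})=K(\gamma\overline{\eta})=\overline{\eta}$ by Proposition \ref{eqv.}(b), and then homogeneity forces the appropriate vanishing), or more directly invoke that $T(\overline{X},\overline{\eta})=0$ is equivalent, via Proposition \ref{eqv.}, to $K=\gamma^{-1}$ on $V(TM)$, which holds for the Cartan connection because its associated nonlinear connection is genuinely a nonlinear connection (Theorem \ref{c.ba.}). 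So (c) follows by citing Proposition \ref{eqv.} once one knows $\Gamma_\nabla$ is a nonlinear connection. With (c) in hand, the hypotheses of Proposition \ref{pp.1} are satisfied, so I may use its conclusions freely for the remaining items.

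For item (d): apply Proposition \ref{pp.1}(a) to $\nabla$. Since $Q=0$ the formula there for $S(\overline{X},\overline{Y})\overline{Z}$ still involves $T$ and $\widehat{S}$. The cleaner route is to use the v-curvature $S$ of the Cartan connection directly: by Lemma \ref{lem.2}(c) with $\nabla g=0$, $g(S(\overline{X},\overline{Y})\overline{Z},\overline{W})$ is antisymmetric in $\overline{Z},\overline{W}$; combining this antisymmetry with the symmetry statement (b) and the expression for $S$ from Proposition \ref{pp.1}(a), the "non-derivative" terms $T(\overline{X},T(\overline{Y},\overline{Z}))-T(\overline{Y},T(\overline{X},\overline{Z}))+T(\widehat{S}(\overline{X},\overline{Y}),\overline{Z})$ can be shown to be symmetric in $\overline{Z},\overline{W}$ after lowering, while $S$ itself is antisymmetric, forcing a relation that collapses to $(\nabla_{\gamma\overline{X}}T)(\overline{Y},\overline{Z})=(\nabla_{\gamma\overline{Y}}T)(\overline{X},\overline{Z})$. (Alternatively, differentiate the almost-complex / tangent-structure relations, but the tensorial argument via Proposition \ref{pp.1}(a) and Lemma \ref{lem.2}(c) is the one I would push through.) For item (e), use Euler's theorem / homogeneity: $T$ is positively homogeneous of degree $-1$ in the directional argument, so the covariant derivative along $\gamma\overline{\eta}$ (the vertical Liouville direction) acts as the Euler operator and gives $(\nabla_{\gamma\overline{\eta}}T)(\overline{X},\overline{Y})=(-1)\cdot T(\overline{X},\overline{Y})$; I would justify homogeneity of $T$ from $T(\overline{X},\overline{\eta})=0$ together with the homogeneity of the Cartan tensor (or contract (d) with $\overline{\eta}$, using (a) and (c): setting $\overline{Y}=\overline{\eta}$ in (d) gives $(\nabla_{\gamma\overline{X}}T)(\overline{\eta},\overline{Z})=(\nabla_{\gamma\overline{\eta}}T)(\overline{X},\overline{Z})$, and the left side equals $\nabla_{\gamma\overline{X}}(T(\overline{\eta},\overline{Z}))-T(\nabla_{\gamma\overline{X}}\overline{\eta},\overline{Z})-T(\overline{\eta},\nabla_{\gamma\overline{X}}\overline{Z})=-T(\overline{X},\overline{Z})$ since $T(\overline{\eta},\overline{Z})=0$ by (a)+(c) and $\nabla_{\gamma\overline{X}}\overline{\eta}=K(\gamma\overline{X})=\overline{X}$). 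Finally, item (f) is just the conjunction: (a) gives symmetry in the last two slots, and since by (a) and (c) $T(\overline{\eta},\overline{Y})=g(T(\overline{\eta},\overline{Y}),\overline{Z})$-type contractions vanish, the remaining symmetry $T(\overline{X},\overline{Y},\overline{Z})=T(\overline{Y},\overline{X},\overline{Z})$ follows from property (iii) combined with (a) (a standard three-index symmetry chase: symmetric in slots $2,3$ and the combination with the $g$-symmetry of the defining property yields full symmetry), so $T$ is totally symmetric.

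\textbf{Main obstacle.} The step I expect to be most delicate is (d) (equivalently, pinning down (e) rigorously without circularity): one must extract a first-order differential identity for $T$ from the purely algebraic-plus-curvature identity of Proposition \ref{pp.1}(a), and this requires carefully exploiting the antisymmetry of $S$ in $\overline{Z},\overline{W}$ (Lemma \ref{lem.2}(c) with $\nabla g=0$) against the symmetry (b)--(a) of the $T$-quadratic terms; getting the index bookkeeping right so that the $\widehat{S}$ and double-$T$ terms cancel in the right pattern is the crux. Everything else is either a direct citation (a), (c), routine covariant differentiation (b), a homogeneity/Euler argument (e), or an index-symmetry chase (f).
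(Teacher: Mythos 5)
Your overall architecture for (a), (b) and the delicate item (d) matches the paper's, but two of your steps do not go through as written, and the most important one is (c). You propose to get $T(\overline{X},\overline{\eta})=0$ from Proposition \ref{eqv.} together with Theorem \ref{c.ba.}, but this is circular: $\Gamma_{\nabla}=2\beta\circ\rho-I$ is a nonlinear connection for \emph{every} regular connection (immediate from $\rho\circ\beta=id$ and $\rho\circ\gamma=0$), and Theorem \ref{c.ba.} only identifies it with $[J,G]$; it carries no information about the connection map $K$ on $V(\T M)$. The three equivalent assertions of Proposition \ref{eqv.} --- $T(\overline{X},\overline{\eta})=0$, $K=\gamma^{-1}$ on $V(TM)$, and $\beta\circ\rho-\gamma\circ K$ being a nonlinear connection --- all concern precisely the vertical part of $K$, i.e.\ exactly what is to be proved, and none of them is delivered by knowing $\Gamma_{\nabla}=[J,G]$. (Your alternative remark that $\nabla_{\gamma\overline{\eta}}\overline{\eta}=0$ ``since $K(\gamma\overline{\eta})=\overline{\eta}$ by Proposition \ref{eqv.}(b)'' is internally inconsistent and again presupposes \ref{eqv.}(b).) Since $T(\overline{X},\overline{\eta})=K(\gamma\overline{X})-\overline{X}$, some genuine computation of $\nabla_{\gamma\overline{X}}\overline{\eta}$ is unavoidable; the paper does this via the Koszul-type formula (\ref{c1eq.r}) determined by metricity and the torsion, specialized to $(\gamma\overline{X},hY,hZ)$ with axiom (iii) to get (\ref{eq}), and then evaluates $2g(T(\overline{X},\overline{\eta}),\overline{Z})$ using the identity $\overline{X}=\rho[\gamma\overline{X},\beta\overline{\eta}]$.

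Your item (f) is also not proved: axiom (iii) of Theorem \ref{th.1} and item (a) are the \emph{same} symmetry (in the last two slots of the lowered tensor), so a ``symmetry chase'' from them alone cannot yield symmetry in the first two slots --- a trilinear form symmetric in its last two arguments need not be totally symmetric. The paper obtains the missing symmetry from (d): set $\overline{Z}=\overline{\eta}$ there, expand the covariant derivatives using (c) and $\nabla_{\gamma\overline{X}}\overline{\eta}=K(\gamma\overline{X})=\overline{X}$, and read off $T(\overline{X},\overline{Y})=T(\overline{Y},\overline{X})$. Relatedly, your derivation of (e) uses $T(\overline{\eta},\overline{Z})=0$, which does not follow from (a) and (c) (those only give $g(T(\overline{X},\overline{Z}),\overline{\eta})=0$) but from the first-slot symmetry just described, so (f) must precede (e). Finally, a bookkeeping point in (d): the quadratic terms $g(T(\overline{X},\overline{W}),T(\overline{Y},\overline{Z}))-g(T(\overline{Y},\overline{W}),T(\overline{X},\overline{Z}))$ are \emph{antisymmetric} in $\overline{Z},\overline{W}$ and drop out of the symmetrization, while the derivative and $\widehat{S}$ terms are symmetric by (b) and (a); symmetrizing therefore gives $(\nabla_{\gamma\overline{X}}T)(\overline{Y},\overline{Z})-(\nabla_{\gamma\overline{Y}}T)(\overline{X},\overline{Z})=T(\widehat{S}(\overline{X},\overline{Y}),\overline{Z})$, and you still have to show $\widehat{S}=0$, which the paper does by substituting this relation back into Proposition \ref{pp.1}(a) and setting $\overline{Z}=\overline{\eta}$, using (c).
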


\begin{proof} ~\par
\vspace{4pt}
 \noindent\textbf{(b)} \ Follows from the following relations, making
use of (a):
$$
g((\nabla_{W}T)(\overline{X},\overline{Y}),\overline{Z})=
g(\nabla_{W}T(\overline{X},\overline{Y}),\overline{Z})-g(T(\nabla_{W}\overline{X},\overline{Y}),\overline{Z})
-g(T(\overline{X},\nabla_{W}\overline{Y}),\overline{Z}).
 $$
$$
g((\nabla_{W}T)(\overline{X},\overline{Z}),\overline{Y})=
g(\nabla_{W}T(\overline{X},\overline{Z}),\overline{Y})-
g(T(\nabla_{W}\overline{X},\overline{Z}),\overline{Y})-g(T(\overline{X},\nabla_{W}\overline{Z}),\overline{Y}).
$$
\begin{eqnarray*}
  g(\nabla_{W}T(\overline{X},\overline{Y}),\overline{Z})=W \cdot g(T(\overline{X},\overline{Y}),\overline{Z})-
  g(T(\overline{X},\overline{Y}),\nabla_{W}\overline{Z}).
\end{eqnarray*}
\vspace{4pt}
 \noindent\textbf{(c)} \ As  $\nabla$ is a metric linear connection
on $\pi^{-1}(T M)$ with
 nonzero torsion ${\textbf{T}}$, one can show that
 $\nabla$  is completely  determined  by the relation
\vspace{-0.2cm}
\begin{equation}\label{c1eq.r}
     \left.
    \begin{array}{rcl}
    2g(\nabla _{X}\rho Y,\rho Z)& =& X\cdot g(\rho Y,\rho
                Z)+ Y\cdot g(\rho Z,\rho X)-Z\cdot g(\rho X,\rho
                Y) \\
        & &-g(\rho X,{\textbf{T}}(Y,Z))+g(\rho Y,{\textbf{T}}(Z,X))
        +g(\rho Z,{\textbf{T}}(X,Y)) \\
        & &-g(\rho X,\rho [Y,Z])+g(\rho Y,\rho [Z,X])+g(\rho Z,\rho [X,Y]).\vspace{-0.2cm}
   \end{array}
  \right.
 \end{equation}
 for all $X,Y,Z\in\cpp$.
The connection $\nabla$ being regular, let  $h$ and $v$ be the
horizontal and vertical projectors associated with the
 decomposition (\ref{proj.}): $h=\beta\circ\rho$, $ v=I-\beta\circ\rho $.
\par
 Replacing $X, Y, Z$ by $\gamma \overline{X}, {h}Y, {h}Z $
 in (\ref{c1eq.r})
 and using  axiom  {(iii)} of Theorem \ref{th.1},  taking into account
  the fact that $\rho \circ \gamma=0$ and $\rho\circ h=\rho$, we get\vspace{-0.2cm}
\begin{equation}\label{eq}
2g(\nabla _{\gamma \overline{X}}\rho Y,\rho Z) =\gamma
\overline{X}\cdot g(\rho Y,\rho Z)+
     g(\rho Y,\rho [hZ,\gamma \overline{X}])+g(\rho Z,\rho [\gamma \overline{X},hY]).\vspace{-0.3cm}
\end{equation}
Now,  \vspace{-0.2cm}
\begin{eqnarray*}
  2\,g(T(\overline{X},\overline{\eta}),\overline{Z})=
  2\,g(\textbf{T}(\gamma\overline{X},\beta\overline{\eta}),\overline{Z})=
  2\,g(\nabla_{\gamma\overline{X}}\overline{\eta},\overline{Z})
  -2\,g(\rho[\gamma\overline{X}, \beta\overline{\eta}],\overline{Z}) \vspace{-0.2cm}.
\end{eqnarray*}
Then, from (\ref{eq}), we get\vspace{-0.2cm}
$$2\,g(T(\overline{X},\overline{\eta}),\overline{Z})=
\gamma\overline{X}\cdot g( \overline{\eta}, \overline{Z})+ g(
\overline{\eta},\rho [\beta\overline{Z},\gamma\overline{X}])-g(
\overline{Z},\rho [\gamma\overline{X},\beta\overline{\eta}]) .$$
 From which, together with the identity
 $\overline{X}= \rho\,[\gamma\overline{X},\beta\overline{\eta} ] \,$
  \cite{r92}, we
obtain\vspace{-0.2cm}
$$2\,g(T(\overline{X},\overline{\eta}),\overline{Z})=
\gamma\overline{X}\cdot g( \overline{\eta}, \overline{Z})+ g(
\overline{\eta},\rho [\beta\overline{Z},\gamma\overline{X}])-g(
\overline{Z},\overline{X}) \vspace{-0.2cm}.$$

 Finally, one can show that the sum of the first
two terms on the right-hand side is equal to $g(
\overline{X},\overline{Z})$, from which the result.

\vspace{4pt}
 \noindent\textbf{(d)}\, Since $\nabla$ is regular  with
$T(\overline{X},\overline{\eta})=0$,
  then, by Proposition \ref{pp.1}(a) and property (a), we have
\begin{equation*}\label{3.eq.7}
\left.
    \begin{array}{rcl}
S(\overline{X},\overline{Y},\overline{Z},\overline{W}) &=&
g((\nabla_{\gamma \overline{Y}}T)(\overline{X},\overline{Z}),
\overline{W}) -g((\nabla_{\gamma
\overline{X}}T)(\overline{Y},\overline{Z}),\overline{W})
\\
& &+g(T(\overline{X},\overline{W}),T(\overline{Y},\overline{Z}))-
g(T(\overline{Y},\overline{W}),T(\overline{X},\overline{Z}))\\
&&+g(T( \widehat{S}(\overline{X},
\overline{Y}),\overline{Z}),\overline{W}).\vspace{-0.2cm}
    \end{array}
  \right.
\end{equation*}
On the other hand, using Lemma \ref{lem.2}{(c)}, together with axiom
(i) of Theorem \ref{th.1}, we get
\begin{equation}\label{eq..1}
    S(\overline{X},\overline{Y},\overline{Z},\overline{W})
=-S(\overline{X},\overline{Y},\overline{W},\overline{Z}).
\end{equation}
 Using the properties (a) and (b), the above two
equations,  yield\vspace{-0.1cm}
\begin{equation}\label{3.eq.8}
    (\nabla_{\gamma \overline{X}}T)(\overline{Y},\overline{Z})-(\nabla_{\gamma
\overline{Y}}T)(\overline{X},\overline{Z})=T(
\widehat{S}(\overline{X},
\overline{Y}),\overline{Z}).\vspace{-0.1cm}
\end{equation}
Substituting (\ref{3.eq.8}) in (a) of Proposition \ref{pp.1}, we get
\begin{equation}\label{eq..2}
S(\overline{X},\overline{Y})\overline{Z}=
T(\overline{X},T(\overline{Y},\overline{Z}))-T(\overline{Y},T(\overline{X},\overline{Z})).\vspace{-0.2cm}
\end{equation}
Setting $\overline{Z}=\overline{\eta}$ in (\ref{eq..2}) and noting
that $T(\overline{X},
 \overline{\eta})=0$, we have
 \begin{equation}\label{eq.s.hat}
 \widehat{S}(\overline{X},\overline{Y})=0.
\end{equation}
Then,  the result follows from (\ref{3.eq.8}) and (\ref{eq.s.hat}) .

\vspace{4pt}
 \noindent\textbf{(f)}\, Follows from (d) by setting
$\overline{Z}=\overline{\eta}$, taking into account (c) and (a).
\end{proof}


\begin{thm}\label{th..2} The v-curvature $S$ of the Cartan connection
has the properties\,\emph{:} \vspace{-0.2cm}
\begin{description}
 \item[(a)]   $ S(\overline{X},\overline{Y},\overline{Z}, \overline{W})=-
 S(\overline{Y},\overline{X},\overline{Z},\overline{W})$,

 \item[(b)]  $ S(\overline{X},\overline{Y},\overline{Z}, \overline{W})=-
 S(\overline{X},\overline{Y},\overline{W},\overline{Z})$,

\item[(c)]$S(\overline{X},\overline{Y})\overline{Z}=
T(\overline{X},T(\overline{Y},\overline{Z}))-T(\overline{Y},T(\overline{X},\overline{Z})),$

\item[(d)] $S(\overline{X}, \overline{Y} , \overline{Z}, \overline{W})
=g(T(\overline{X},\overline{W}) , T(\overline{Y}, \overline{Z}))-
g(T(\overline{Y}, \overline{W}) ,T( \overline{X}, \overline{Z}))$,

\item[(e)]  $S(\overline{Z},\overline{W},\overline{X}, \overline{Y})
=S(\overline{X},\overline{Y},\overline{Z},\overline{W})$,

 \item[(f)] $S(\overline{X}, \overline{\eta}) \overline{Y}=
 S(\overline{\eta}, \overline{X}) \overline{Y}={\widehat{S}}(\overline{X},\overline{Y})=0$,

   \item[(g)] $\mathfrak{S}_{\overline{X},\overline{Y},\overline{Z}}\,
 \{(\nabla_{\gamma \overline{X}}S)(\overline{Y}, \overline{Z}, \overline{W})\}=0,$

 \item[(h)] $\mathfrak{S}_{\overline{X},\overline{Y},\overline{Z}}\,
 \{S(\overline{X}, \overline{Y}) \overline{Z}\}=0$,

\item[(i)] $(\nabla_{\gamma \overline{\eta}}S)(\overline{X}, \overline{Y}, \overline{Z})=
-2S(\overline{X}, \overline{Y}) \overline{Z}$,

 \item[(j)] $(\nabla_{\beta\overline{Z}}S)(\overline{X},\overline{Y},\overline{W}
)=(\nabla_{\gamma
\overline{X}}P)(\overline{Z},\overline{Y},\overline{W})-
(\nabla_{\gamma \overline{Y}}P)(\overline{Z},\overline{X},
\overline{W})-
S(\widehat{P}(\overline{Z},\overline{Y}),\overline{X})\overline{W}$\\
${\qquad\qquad\qquad\qquad\quad}
+S(\widehat{P}(\overline{Z},\overline{X}),\overline{Y})\overline{W}
-P(T(\overline{Y},\overline{Z}),\overline{X})\overline{W}+
P(T(\overline{X},\overline{Z}),\overline{Y})\overline{W}$.
\end{description}
\end{thm}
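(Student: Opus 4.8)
The proof is, for the most part, a matter of specializing the general identities of \S 2 to the Cartan connection $\nabla$, which by Proposition \ref{pp.3}(c) satisfies $T(\overline{X},\overline{\eta})=0$, so that all of Proposition \ref{pp.1}, Proposition \ref{pp.2} and Lemma \ref{bracket} are available, and moreover has $Q=0$ (axiom (ii)) and $\widehat{S}=0$ (equation (\ref{eq.s.hat}), established in the proof of Proposition \ref{pp.3}(d)). The single most useful tool is the closed formula $S(\overline{X},\overline{Y})\overline{Z}=T(\overline{X},T(\overline{Y},\overline{Z}))-T(\overline{Y},T(\overline{X},\overline{Z}))$ (equation (\ref{eq..2})), together with the total symmetry of $T$ (Proposition \ref{pp.3}(f)), the metric axiom $\nabla g=0$, and the homogeneity relation $(\nabla_{\gamma\overline{\eta}}T)(\overline{X},\overline{Y})=-T(\overline{X},\overline{Y})$ (Proposition \ref{pp.3}(e)). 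Thus (c) is just (\ref{eq..2}) and (b) is just (\ref{eq..1}), both already in hand, and the work is to derive the remaining items from these.

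First I would dispose of the algebraic symmetries. Property (a) is immediate from the skew-symmetry $\mathbf{K}(X,Y)=-\mathbf{K}(Y,X)$ built into the definition of the classical curvature, taken at $X=\gamma\overline{X}$, $Y=\gamma\overline{Y}$. Property (b) follows from Lemma \ref{lem.2}(c): since $\nabla g=0$, the entire right-hand side vanishes and we get $g(\mathbf{K}(\gamma\overline{X},\gamma\overline{Y})\overline{Z},\overline{W})=-g(\mathbf{K}(\gamma\overline{X},\gamma\overline{Y})\overline{W},\overline{Z})$. For (d) I would lower an index in (c) and apply axiom (iii) of Theorem \ref{th.1} (equivalently Proposition \ref{pp.3}(a)) in the form $g(T(\overline{X},T(\overline{Y},\overline{Z})),\overline{W})=g(T(\overline{X},\overline{W}),T(\overline{Y},\overline{Z}))$, and similarly for the second term. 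Property (e) then follows from (d) by applying the total symmetry of $T$ to each of the two inner products. Property (h) also follows from (c): forming $\mathfrak{S}_{\overline{X},\overline{Y},\overline{Z}}\{S(\overline{X},\overline{Y})\overline{Z}\}$ one obtains six terms of the form $\pm T(\,\cdot\,,T(\,\cdot\,,\,\cdot\,))$ which cancel in pairs once the symmetry $T(\overline{A},\overline{B})=T(\overline{B},\overline{A})$ is used.

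Next the ``$\overline{\eta}$-identities'' (f) and (i). For (f): by (c), $\widehat{S}(\overline{X},\overline{Y})=S(\overline{X},\overline{Y})\overline{\eta}=T(\overline{X},T(\overline{Y},\overline{\eta}))-T(\overline{Y},T(\overline{X},\overline{\eta}))=0$ using $T(\,\cdot\,,\overline{\eta})=0$ (Proposition \ref{pp.3}(c)); likewise $S(\overline{X},\overline{\eta})\overline{Y}=T(\overline{X},T(\overline{\eta},\overline{Y}))-T(\overline{\eta},T(\overline{X},\overline{Y}))=0$ after using total symmetry to move $\overline{\eta}$ to the second slot, and $S(\overline{\eta},\overline{X})\overline{Y}=-S(\overline{X},\overline{\eta})\overline{Y}=0$ by (a). For (i): differentiating (c) along $\gamma\overline{\eta}$, the Leibniz rule gives, for $F(\overline{X},\overline{Y},\overline{Z}):=T(\overline{X},T(\overline{Y},\overline{Z}))$, the identity $(\nabla_{W}F)(\overline{X},\overline{Y},\overline{Z})=(\nabla_{W}T)(\overline{X},T(\overline{Y},\overline{Z}))+T(\overline{X},(\nabla_{W}T)(\overline{Y},\overline{Z}))$; at $W=\gamma\overline{\eta}$, invoking Proposition \ref{pp.3}(e) twice yields $(\nabla_{\gamma\overline{\eta}}F)=-2F$, and antisymmetrizing in the first two arguments gives $(\nabla_{\gamma\overline{\eta}}S)(\overline{X},\overline{Y},\overline{Z})=-2S(\overline{X},\overline{Y})\overline{Z}$.

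Finally, the Bianchi-type identities (g) and (j) are direct specializations: Proposition \ref{pp.2}(a) with $\widehat{S}=0$ collapses to $\mathfrak{S}_{\overline{X},\overline{Y},\overline{Z}}\{(\nabla_{\gamma\overline{X}}S)(\overline{Y},\overline{Z},\overline{W})\}=0$, which is (g), and Proposition \ref{pp.2}(b), after dropping the term $P(\overline{Z},\widehat{S}(\overline{X},\overline{Y}))\overline{W}=0$ and transposing, is verbatim (j). The main obstacle is not conceptual but clerical: there is no genuinely hard step, and the only real care needed is (i) keeping the signs and the order of the arguments of $T$, $P$, $S$ straight in the cancellation for (h) and in the Leibniz computation for (i), and (ii) correctly invoking the intermediate facts (\ref{eq..1})--(\ref{eq.s.hat}) already produced inside the proof of Proposition \ref{pp.3}(d), so that (b) and (c) are quoted rather than reproved.
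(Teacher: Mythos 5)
Your proposal is correct, and for the bulk of the theorem it follows exactly the paper's route: (b), (c), (d) are read off from the identities (\ref{eq..1}), (\ref{eq..2}) already established inside the proof of Proposition \ref{pp.3}(d); (e) and (f) come from the explicit formula for $S$ plus the symmetry properties of $T$; (g) and (j) are Proposition \ref{pp.2}(a),(b) with $\widehat{S}=0$ inserted. The only points where you diverge are (h) and (i): the paper obtains both as contractions of the Bianchi identity (g), setting $\overline{W}=\overline{\eta}$ (resp.\ $\overline{X}=\overline{\eta}$) and using (f) together with $\nabla_{\gamma\overline{X}}\overline{\eta}=\overline{X}$, whereas you prove (h) by a direct six-term cancellation in the cyclic sum of (c) and (i) by differentiating (c) along $\gamma\overline{\eta}$ via the Leibniz rule and the homogeneity relation $(\nabla_{\gamma\overline{\eta}}T)=-T$ of Proposition \ref{pp.3}(e). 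Both variants are sound; the paper's has the advantage of deriving (h) and (i) uniformly from the single identity (g) with no further computation, while yours makes visible that these two items depend only on the closed formula (c) and the algebraic properties of $T$, independently of the vertical Bianchi identity. No gaps.
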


\begin{proof}~\par

\vspace{4pt}
 \noindent\textbf{(b)},  \textbf{(c)}  and \textbf{(d)} follow immediately from  (\ref{eq..1}) and (\ref{eq..2}).

\vspace{4pt}
 \noindent\textbf{(e)} and  \textbf{(f)} follow from (d) and the properties  of $T$.

\vspace{4pt}
 \noindent\textbf{(g)} Follows from Proposition \ref{pp.2}(a) and  (\ref{eq.s.hat}).

\vspace{4pt}
 \noindent\textbf{(h)} and \textbf{(i)} follow from (g) by setting $\overline{W}=\overline{\eta}$
 and  $\overline{X}=\overline{\eta}$ respectively, taking (f) into
account.

\vspace{4pt}
 \noindent\textbf{(j)} \ Follows from Proposition \ref{pp.2}(b) and  (\ref{eq.s.hat}).
\end{proof}


\begin{thm}\label{.thm1} The hv-curvature  tensor $P$ of the Cartan connection has
the properties\,:\vspace{-0.2cm}
\begin{description}
 \item[(a)]   $ P(\overline{X},\overline{Y},\overline{Z}, \overline{W})=-
 P(\overline{X},\overline{Y},\overline{W},\overline{Z})$,

\item[(b)] $P(\overline{X},\overline{Y})\overline{Z}-P(\overline{Z},\overline{Y})\overline{X}=
(\nabla_{\beta
\overline{Z}}T)(\overline{Y},\overline{X})-(\nabla_{\beta
\overline{X}}T)(\overline{Y},\overline{Z})$\\ $ { \qquad\qquad\qquad
\qquad\qquad\ \
}-T(\widehat{P}(\overline{Z},\overline{Y}),\overline{X})+
T(\widehat{P}(\overline{X},\overline{Y}),\overline{Z}),$

\item[(c)] $ P(\overline{X},\overline{Y},\overline{Z}, \overline{W}) =
   g(( \nabla_{\beta \overline{Z}}T)(\overline{X}, \overline{Y}),\overline{W})
   -g(( \nabla_{\beta \overline{W}}T)(\overline{X},
   \overline{Y}),\overline{Z})$\\
   ${\qquad\qquad\qquad \ \ \ \ }+ g(T(\overline{X},\overline{Z}),\widehat{P}(\overline{W},\overline{Y}))
   -g(T(\overline{X},\overline{W}),\widehat{P}(\overline{Z},
   \overline{Y})),$

 \item[(d)] $ \widehat{P}(\overline{\eta}, \overline{X})=0,$

  \item[(e)] $\widehat{P}(\overline{X}, \overline{Y})=(\nabla_{\beta \overline{\eta}}T)(\overline{X},\overline{Y}),$

\item[(f)] $\widehat{{P}}$  is  symmetric,

 \item[(g)] $ P(\overline{\eta}, \overline{X})\overline{Y}=P(\overline{X}, \overline{\eta})\overline{Y}=0,$

\item[(h)] $(\nabla_{\gamma \overline{\eta}}P)(\overline{X}, \overline{Y}, \overline{Z})=
-P(\overline{X}, \overline{Y}) \overline{Z}$,

 \item[(i)] $  P(\overline{X}, \overline{Y} ) \overline{Z}=
 P(\overline{Y}, \overline{X} ) \overline{Z} -
 (\nabla_{\beta \overline{\eta}}S)(\overline{X}, \overline{Y}, \overline{Z}).$
\end{description}
\end{thm}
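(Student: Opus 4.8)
The plan is to derive all nine properties as consequences of the general identities established in $\S 2$ (Propositions \ref{pp.1} and \ref{pp.2}), the defining axioms of the Cartan connection (Theorem \ref{th.1}), and the already-proven properties of $T$ and $S$ (Proposition \ref{pp.3} and Theorem \ref{th..2}), specializing barred arguments to $\overline{\eta}$ where needed. First, for \textbf{(a)} I would invoke Lemma \ref{lem.2}(c) with $X=\beta\overline{X}$, $Y=\gamma\overline{Y}$, using $\nabla g=0$ (axiom (i)) to kill the right-hand side, which gives the skew-symmetry $P(\overline{X},\overline{Y},\overline{Z},\overline{W})=-P(\overline{X},\overline{Y},\overline{W},\overline{Z})$ directly. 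For \textbf{(b)} I would specialize Proposition \ref{pp.1}(b) to the Cartan connection, where $Q=0$ by axiom (ii); the terms $(\nabla_{\gamma\overline{Y}}Q)$, $T(\overline{Y},Q(\cdots))$, $Q(\overline{Z},T(\cdots))$ and $Q(\overline{X},T(\cdots))$ all vanish, leaving exactly the stated four-term identity.

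For \textbf{(c)} I would take the cyclic/permutation combination of (b): write (b) once as given and once with $\overline{W}$ in place of $\overline{Z}$ (paired via the metric against the remaining vector), then feed in property (a) (the skew-symmetry in the last two slots) and the symmetry of $T$ from Proposition \ref{pp.3}(a),(f), together with the metric-compatibility $g((\nabla_W T)(\cdot,\cdot),\cdot)$ symmetry of Proposition \ref{pp.3}(b); solving the resulting linear system for $P(\overline{X},\overline{Y},\overline{Z},\overline{W})$ isolates the claimed expression. This is the same ``cyclic-sum-and-solve'' maneuver used to pin down the Cartan connection itself in the proof of Proposition \ref{pp.3}(c), so I expect it to be the most computation-heavy step but entirely routine. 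For \textbf{(d)} set $\overline{X}=\overline{\eta}$ in (b) and contract with $\overline{\eta}$ in the appropriate slot: using $T(\overline{\eta},\cdot)=0$ (Proposition \ref{pp.3}(c)) and $(\nabla_{\beta\overline{Z}}T)(\overline{Y},\overline{\eta})$-type terms, the definition $\widehat{P}(\overline{X},\overline{Y})=P(\overline{X},\overline{Y})\overline{\eta}$ forces $\widehat{P}(\overline{\eta},\overline{X})=0$; I would also use $\nabla_{\beta\overline{Z}}\overline{\eta}=K(\beta\overline{Z})\cdot(\cdots)=0$ since $\beta\overline{Z}$ is horizontal, so $\overline{\eta}$ is parallel along horizontal directions.

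For \textbf{(e)}: contract (b) with $\overline{Z}=\overline{\eta}$ (or rather specialize so that $\widehat{P}$ appears), using $T(\overline{Y},\overline{\eta})=0$ and horizontal parallelism of $\overline{\eta}$ to reduce $P(\overline{X},\overline{Y})\overline{\eta}$ to $(\nabla_{\beta\overline{\eta}}T)(\overline{X},\overline{Y})$ plus terms that vanish by (d). Then \textbf{(f)} is immediate from (e) combined with the symmetry of $(\nabla_{\beta\overline{\eta}}T)$ in its last two arguments, which is Proposition \ref{pp.3}(b) with $W=\beta\overline{\eta}$ and (a). For \textbf{(g)}: set $\overline{X}=\overline{\eta}$ in (c) and use $T(\overline{\eta},\cdot)=0$, $(\nabla_{\beta\overline{Z}}T)(\overline{\eta},\overline{Y})=-T(\overline{Y},\cdot)$-type reductions from Proposition \ref{pp.3}(b) applied at $\overline{\eta}$, together with (d)--(f), to get $P(\overline{\eta},\overline{X})\overline{Y}=0$; the other equality $P(\overline{X},\overline{\eta})\overline{Y}=0$ then follows from (a) (skew in slots 3,4 means vanishing when slot 3 is filled appropriately) — more precisely from $g(P(\overline{X},\overline{\eta})\overline{Y},\overline{Z})$ and homogeneity. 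For \textbf{(h)}: apply the Bianchi-type identity of Proposition \ref{pp.2}(b) specialized to the Cartan connection with $\overline{Z}=\overline{\eta}$, using $\widehat{S}=0$ (equation \eqref{eq.s.hat}), $S(\overline{\eta},\cdot)=0$ and $\widehat{P}(\overline{\eta},\cdot)=0$ from (d); the surviving terms collapse to $(\nabla_{\gamma\overline{\eta}}P)(\overline{X},\overline{Y},\overline{Z})=-P(\overline{X},\overline{Y})\overline{Z}$ after recognizing $(\nabla_{\beta\overline{\eta}}S)=0$ via Theorem \ref{th..2}(i) evaluated suitably. Finally \textbf{(i)} follows by antisymmetrizing (c) in $\overline{X}\leftrightarrow\overline{Y}$: the difference $P(\overline{X},\overline{Y})\overline{Z}-P(\overline{Y},\overline{X})\overline{Z}$ picks up exactly $-(\nabla_{\beta\overline{\eta}}S)(\overline{X},\overline{Y},\overline{Z})$ once one uses Theorem \ref{th..2}(j) (the mixed Bianchi identity relating $\nabla S$ and $\nabla P$) — alternatively, combine \eqref{eq..2} / Theorem \ref{th..2}(c) with the $\nabla_{\beta\overline{\eta}}$-derivative of the totally-symmetric $T$. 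The main obstacle I anticipate is bookkeeping in \textbf{(c)}: keeping track of which slot $\overline{\eta}$-contractions hit, and correctly applying the metric symmetry of $\nabla T$, so that the cyclic combination genuinely closes; everything else is substitution of $Q=0$, $\widehat{S}=0$, $T(\overline{\eta},\cdot)=0$ into the $\S 2$ master identities.
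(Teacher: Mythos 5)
Your overall strategy is the same as the paper's: (a) from Lemma \ref{lem.2}(c) with $X=\beta\overline{X}$, $Y=\gamma\overline{Y}$ and $\nabla g=0$; (b) from Proposition \ref{pp.1}(b) with $Q=0$; (c) by the cyclic ``add two, subtract one'' permutation of (b) using Proposition \ref{pp.3}(a),(b),(f); and (h),(i) from the mixed Bianchi identity Proposition \ref{pp.2}(b) (equivalently Theorem \ref{th..2}(j)) specialized at $\overline{\eta}$. However, there is a genuine gap at (e). Setting $\overline{Z}=\overline{\eta}$ in (b) and using $T(\cdot,\overline{\eta})=0$, $\nabla_{\beta\overline{X}}\overline{\eta}=0$ and (d) leaves $\widehat{P}(\overline{X},\overline{Y})-P(\overline{\eta},\overline{Y})\overline{X}=(\nabla_{\beta\overline{\eta}}T)(\overline{Y},\overline{X})$; the term $P(\overline{\eta},\overline{Y})\overline{X}$ does \emph{not} vanish by (d), which only gives $P(\overline{\eta},\overline{Y})\overline{\eta}=0$. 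Its vanishing is exactly the first half of (g), which in your plan is proved later using (d)--(f), so as written (e) is circular. The repair is what the paper does: read (e) directly off (c) with $\overline{Z}=\overline{\eta}$ (all $T$-terms and the $\widehat{P}(\overline{\eta},\cdot)$-term drop, leaving $g(\widehat{P}(\overline{X},\overline{Y}),\overline{W})=g((\nabla_{\beta\overline{\eta}}T)(\overline{X},\overline{Y}),\overline{W})$), and likewise obtain $P(\overline{\eta},\overline{X})\overline{Y}=0$ from (c) with $\overline{X}=\overline{\eta}$ without any appeal to (e) or (f); the paper also gets (d) from (c) with $\overline{X}=\overline{Z}=\overline{\eta}$, which is cleaner than your contraction of (b) (that contraction does work, but only after invoking the skew-symmetry (a) twice, which you do not mention in that step).

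A second, smaller slip: in (h), substituting $\overline{Z}=\overline{\eta}$ into Proposition \ref{pp.2}(b) never produces the term $\nabla_{\gamma\overline{\eta}}P$ --- the derivative terms there are $(\nabla_{\gamma\overline{X}}P)(\overline{Z},\overline{Y},\overline{W})$ and $(\nabla_{\gamma\overline{Y}}P)(\overline{Z},\overline{X},\overline{W})$, so you must set $\overline{X}=\overline{\eta}$ to get (h); the substitution $\overline{Z}=\overline{\eta}$ in fact yields (i). With that relabelling (and using (g) to reduce $(\nabla_{\gamma\overline{Y}}P)(\overline{Z},\overline{\eta},\overline{W})$ to $-P(\overline{Z},\overline{Y})\overline{W}$), your (h) and (i) coincide with the paper's derivation from Theorem \ref{th..2}(j); your alternative route to (i), antisymmetrizing (c) in $\overline{X},\overline{Y}$ and using Theorem \ref{th..2}(c) together with (e), (f) and the total symmetry of $T$ and $\nabla T$, is also valid and is a genuinely different (and slightly more computational) way to that item.
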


\begin{proof}~\par

\vspace{4pt}
 \noindent\textbf{(a)}  Follows from Lemma \ref{lem.2}{(c)} by setting  $X=\beta\overline{X},
Y=\gamma\overline{Y}$, noting that $\nabla g=0$.

\vspace{4pt}
 \noindent\textbf{(b)} Follows from Proposition \ref{pp.1}(b) and  Theorem \ref{th.1}(ii).

\vspace{4pt}
 \noindent\textbf{(c)} From (b), making use of Proposition \ref{pp.3}{(a)}, we have\vspace{-0.2cm}
\begin{equation*}
\left.
    \begin{array}{rcl}
P(\overline{X},\overline{Y},\overline{Z},\overline{W})-
P(\overline{Z},\overline{Y},\overline{X}, \overline{W})&=&
g((\nabla_{\beta\overline{Z}}T)(\overline{Y},\overline{X}),
\overline{W})-g((\nabla_{\beta
\overline{X}}T)(\overline{Y},\overline{Z}), \overline{W})\\
&-&g(T(\overline{X},\overline{W}),\widehat{P}(\overline{Z},\overline{Y}))+
g(T(\overline{Z},\overline{W}),\widehat{P}(\overline{X},\overline{Y})).
    \end{array}
  \right.\vspace{-0.2cm}
\end{equation*}
By cyclic permutation on $\overline{X},\overline{Z},\overline{W}$ of
the above equation, one gets
\begin{equation*}
\left.
    \begin{array}{rcl}
P(\overline{W},\overline{Y},\overline{X},\overline{Z})-
P(\overline{X},\overline{Y},\overline{W}, \overline{Z})&=&
g((\nabla_{\beta\overline{X}}T)(\overline{Y},\overline{W}),
\overline{Z})-g((\nabla_{\beta
\overline{W}}T)(\overline{Y},\overline{X}), \overline{Z})\\
&-&g(T(\overline{W},\overline{Z}),\widehat{P}(\overline{X},\overline{Y}))+
g(T(\overline{X},\overline{Z}),\widehat{P}(\overline{W},\overline{Y})),
    \end{array}
  \right.\vspace{-0.3cm}
\end{equation*}
\begin{equation*}
\left.
    \begin{array}{rcl}
P(\overline{Z},\overline{Y},\overline{W},\overline{X})-
P(\overline{W},\overline{Y},\overline{Z}, \overline{X})&=&
g((\nabla_{\beta\overline{W}}T)(\overline{Y},\overline{Z}),
\overline{X})-g((\nabla_{\beta
\overline{Z}}T)(\overline{Y},\overline{W}), \overline{X})\\
&-&g(T(\overline{Z},\overline{X}),\widehat{P}(\overline{W},\overline{Y}))+
g(T(\overline{W},\overline{X}),\widehat{P}(\overline{Z},\overline{Y})).
    \end{array}
  \right.\vspace{-0.2cm}
\end{equation*}

\vspace{6pt} \noindent
Adding the first two equations and subtracting
the third, using (a) and Proposition \ref{pp.3}{(b)}, (f), the
result follows.

\vspace{4pt}
 \noindent\textbf{(d)} Follows from (c) by setting $\overline{X}=\overline{Z}=\overline{\eta}$,
making use of  the properties of $T$ and the fact that $K\circ
\beta=0$.

\vspace{4pt}
 \noindent\textbf{(e)} Follows from (c) by setting $\overline{Z}=\overline{\eta}$, taking (d) and the properties of $T$
into account.

\vspace{4pt}
 \noindent\textbf{(f)} Follows from (e) together with the symmetry of $T$.

\vspace{4pt}
 \noindent\textbf{(g)} Follows from (c) by setting $\overline{X}=\overline{\eta}$ (resp. $\overline{Y}=\overline{\eta}$),
making use of the  obtained properties of the (v)hv-torsion
 $\widehat{P}$ and the (h)hv-torsion  $T$.

\vspace{4pt}
 \noindent\textbf{(h)} Follows from the property (i) of the v-curvature tensor $S$ (Theorem
\ref{th..2})
 by setting $\overline{X}=\overline{\eta}$ and making use of the obtained
properties of  $T$, $S$ and $P$.

\vspace{4pt}
 \noindent\textbf{(i)} Can be proved in an analogous
manner as (h).
\end{proof}


\begin{thm}\label{th.Rc}  The h-curvature  tensor $R$  of the Cartan connection has
the properties\,:\vspace{-0.2cm}
\begin{description}
 \item[(a)] $ R(\overline{X},\overline{Y},\overline{Z}, \overline{W})=
 -R(\overline{Y},\overline{X},\overline{Z},\overline{W})$,

 \item[(b)]  $ R(\overline{X},\overline{Y},\overline{Z}, \overline{W})=
 -R(\overline{X},\overline{Y},\overline{W},\overline{Z})$,

 \item[(c)] $ \widehat{R}(\overline{X}, \overline{Y})= - K\mathfrak{R}(\beta \overline{X},\beta \overline{Y})$,
  where $\mathfrak{R}$ is the curvature of Barthel
 connection,

\item[(d)] $\mathfrak{S}_{\overline{X},\overline{Y},\overline{Z}}\,
\{R(\overline{X},
\overline{Y})\overline{Z}-T(\widehat{R}(\overline{X},\overline{Y}),\overline{Z})\}=0,$

\item[(e)] $\mathfrak{S}_{\overline{X},\overline{Y},\overline{Z}}\,
\{(\nabla_{\beta \overline{X}}R)(\overline{Y},
\overline{Z},\overline{W})+P(\overline{X},\widehat{R}(\overline{Y},\overline{Z}))\overline{W}\}=0$,

\item[(f)] $(\nabla_{\gamma\overline{X}}R)(\overline{Y},\overline{Z},\overline{W})
   + (\nabla_{\beta\overline{Y}}P)(\overline{Z},\overline{X},\overline{W})-
   (\nabla_{\beta
   \overline{Z}}P)(\overline{Y},\overline{X},\overline{W})$\\
$-
P(\overline{Z},\widehat{P}(\overline{Y},\overline{X}))\overline{W}
+R(T(\overline{X},\overline{Y}),\overline{Z})\overline{W}-
S(\widehat{R}(\overline{Y},\overline{Z}),\overline{X})\overline{W}$\\
$+ P(\overline{Y},
\widehat{P}(\overline{Z},\overline{X}))\overline{W}
-R(T(\overline{X},\overline{Z}),\overline{Y})\overline{W}=0,$

\item[(g)] $(\nabla_{\gamma \overline{\eta}}R)(\overline{X}, \overline{Y},
\overline{Z})=0$.
\end{description}
\end{thm}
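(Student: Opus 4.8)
The plan is to treat the seven assertions in the order (a)--(b) first (as skew-symmetries of $R$), then (c)--(d) (the identification of $\widehat{R}$ with the Barthel curvature and the first Bianchi-type identity), then (e)--(f) (second Bianchi-type identities), and finally (g) (homogeneity in the fibre direction). For (a), I would invoke the definition $R(\overline{X},\overline{Y})\overline{Z}=\textbf{K}(\beta\overline{X},\beta\overline{Y})\overline{Z}$ together with the manifest skew-symmetry of $\textbf{K}$ in its first two arguments; this is immediate. For (b), I would apply Lemma \ref{lem.2}(c) with $X=\beta\overline{X}$, $Y=\beta\overline{Y}$, using that $\nabla g=0$ by axiom (i) of Theorem \ref{th.1}, so that the right-hand side of Lemma \ref{lem.2}(c) vanishes, yielding $g(R(\overline{X},\overline{Y})\overline{Z},\overline{W})+g(R(\overline{X},\overline{Y})\overline{W},\overline{Z})=0$ --- exactly (b). This is the same device already used for the analogous properties of $S$ and $P$.

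For (c), the key observation is that for the Cartan connection the nonlinear connection $\Gamma_{\nabla}$ coincides with the Barthel connection $[J,G]$ (Theorem \ref{c.ba.}), so the horizontal projector and horizontal map agree with those of the Barthel connection. Then $\widehat{R}(\overline{X},\overline{Y})=R(\overline{X},\overline{Y})\overline{\eta}=\textbf{K}(\beta\overline{X},\beta\overline{Y})\overline{\eta}=-\nabla_{[\beta\overline{X},\beta\overline{Y}]}\overline{\eta}+(\text{terms involving }\nabla\nabla\overline{\eta})$. Using $K=\nabla_\bullet\overline{\eta}$ and the fact (Proposition \ref{eqv.}) that $\nabla_{\beta\overline{X}}\overline{\eta}=K(\beta\overline{X})=0$ since $\beta\overline{X}$ is horizontal, the second-order terms drop and one gets $\widehat{R}(\overline{X},\overline{Y})=-K([\beta\overline{X},\beta\overline{Y}])=-K(v_{\Gamma}[\beta\overline{X},\beta\overline{Y}])$. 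The curvature of the Barthel connection is by definition (up to sign conventions) $\mathfrak{R}(\beta\overline{X},\beta\overline{Y})=v_{\Gamma}[\beta\overline{X},\beta\overline{Y}]$ read in $V(TM)$, equivalently $-\frac{1}{2}[\Gamma,\Gamma]$ evaluated on the horizontal lifts; identifying $V(TM)$ with $\pi^{-1}(TM)$ via $\gamma$ and applying $K|_{V}=\gamma^{-1}$ gives $\widehat{R}(\overline{X},\overline{Y})=-K\mathfrak{R}(\beta\overline{X},\beta\overline{Y})$. For (d), I would use Proposition \ref{pp.1}(c) together with $Q=0$ (axiom (ii) of Theorem \ref{th.1}), which kills both cyclic sums on the right-hand side of that identity, leaving $\mathfrak{S}_{\overline{X},\overline{Y},\overline{Z}}\{R(\overline{X},\overline{Y})\overline{Z}-T(\widehat{R}(\overline{X},\overline{Y}),\overline{Z})\}=0$.

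For (e), start from Proposition \ref{pp.2}(d) and set $Q=0$; this immediately gives the stated cyclic identity. For (f), start from Proposition \ref{pp.2}(c), again put $Q=0$ to delete the term $P(Q(\overline{Y},\overline{Z}),\overline{X})\overline{W}$, and rearrange the remaining terms to the displayed form (this is a purely notational regrouping of Proposition \ref{pp.2}(c)). For (g), I expect the main subtlety: set $\overline{X}=\overline{\eta}$ in (e) (or (f)) and use $\widehat{R}(\overline{\eta},\overline{Y})=R(\overline{\eta},\overline{Y})\overline{\eta}=0$, which follows from (d) evaluated with one argument equal to $\overline{\eta}$ and $T(\overline{X},\overline{\eta})=0$ (Proposition \ref{pp.3}(c)); alternatively and more directly, use the homogeneity of the Cartan connection together with the identity $(\nabla_{\gamma\overline{\eta}}R)=-2R+(\text{correction})$ obtained by differentiating the $0$-homogeneity of $R$ in the direction $\mathcal{C}=\gamma\overline{\eta}$, combined with the vanishing $\widehat{S}=0$ (relation (\ref{eq.s.hat})) and the already-established $(\nabla_{\gamma\overline{\eta}}P)=-P$ (Theorem \ref{.thm1}(h)) fed into Proposition \ref{pp.2}(c) with $\overline{X}=\overline{\eta}$. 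The hard part will be bookkeeping the $\overline{\eta}$-substitutions in Proposition \ref{pp.2}(c): one must carefully track which torsion and contracted-curvature terms survive, using $T(\cdot,\overline{\eta})=0$, $\widehat{P}(\overline{\eta},\cdot)=0$ (Theorem \ref{.thm1}(d)), $P(\overline{\eta},\cdot)=P(\cdot,\overline{\eta})=0$ (Theorem \ref{.thm1}(g)), $\widehat{R}(\overline{\eta},\cdot)=0$, and $\widehat{S}=0$, after which the identity collapses to $(\nabla_{\gamma\overline{\eta}}R)(\overline{X},\overline{Y},\overline{Z})=0$.
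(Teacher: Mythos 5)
Your handling of (a)--(f) coincides with the paper's proof: (b) from Lemma \ref{lem.2}(c) with $X=\beta\overline{X},Y=\beta\overline{Y}$ and $\nabla g=0$; (c) by identifying the vertical part of $[\beta\overline{X},\beta\overline{Y}]$ with $\gamma\widehat{R}(\overline{X},\overline{Y})$ and invoking $\mathfrak{R}=-v[h\cdot,h\cdot]$; (d), (e), (f) from Propositions \ref{pp.1}(c), \ref{pp.2}(d), \ref{pp.2}(c) with $Q=0$. One small caution in (c): with the paper's conventions ($\textbf{K}(X,Y)\rho Z=-D_XD_Y\rho Z+D_YD_X\rho Z+D_{[X,Y]}\rho Z$ and $\mathfrak{R}(X,Y)=-v[hX,hY]$) the intermediate identity is $\widehat{R}(\overline{X},\overline{Y})=+K[\beta\overline{X},\beta\overline{Y}]$, not $-K[\beta\overline{X},\beta\overline{Y}]$; you reach the correct final formula only because you simultaneously flip the sign convention for $\mathfrak{R}$, so the two deviations cancel. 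Fix both to the paper's conventions and your argument for (c) is sound.

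The genuine problem is in (g). Your primary suggestion --- set $\overline{X}=\overline{\eta}$ in (e) and use $\widehat{R}(\overline{\eta},\overline{Y})=0$ --- fails on both counts. First, (e) contains only horizontal derivatives $\nabla_{\beta\overline{X}}R$, so no substitution in it can produce the vertical derivative $\nabla_{\gamma\overline{\eta}}R$; (g) must come from (f), i.e.\ from Proposition \ref{pp.2}(c), as the paper does. Second, $\widehat{R}(\overline{\eta},\overline{Y})$ does \emph{not} vanish in general: it is precisely the tensor $H(\overline{Y})=\widehat{R^{\circ}}(\overline{\eta},\overline{Y})=\widehat{R}(\overline{\eta},\overline{Y})$ of Theorem \ref{th1}(h), whose vanishing is equivalent (by the last theorem of $\S 4$) to the vanishing of the whole curvature $\mathfrak{R}$ of the Barthel connection; it also does not follow from (d), since after substituting $\overline{\eta}$ and using $T(\cdot,\overline{\eta})=0$ the identity (d) only relates $\widehat{R}(\overline{X},\overline{Y})$ to $R(\overline{Y},\overline{\eta})\overline{X}$, $R(\overline{\eta},\overline{X})\overline{Y}$ and torsion terms. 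Fortunately the false identity is not needed: putting $\overline{X}=\overline{\eta}$ in (f), the term containing $\widehat{R}$ is $S(\widehat{R}(\overline{Y},\overline{Z}),\overline{\eta})\overline{W}$, which dies because $S(\cdot,\overline{\eta})\cdot=0$ (Theorem \ref{th..2}(f)); the remaining terms die via $T(\cdot,\overline{\eta})=T(\overline{\eta},\cdot)=0$, $\widehat{P}(\overline{\eta},\cdot)=\widehat{P}(\cdot,\overline{\eta})=0$, $P(\cdot,\overline{\eta})\cdot=0$ (Theorem \ref{.thm1}(d),(e),(g)) and $\nabla_{\beta\overline{Y}}\overline{\eta}=K(\beta\overline{Y})=0$, which makes $(\nabla_{\beta\overline{Y}}P)(\overline{Z},\overline{\eta},\overline{W})=0$. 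With that bookkeeping the identity collapses to $(\nabla_{\gamma\overline{\eta}}R)(\overline{Y},\overline{Z},\overline{W})=0$, which is the paper's argument; so replace the appeal to $\widehat{R}(\overline{\eta},\cdot)=0$ (and the detour through $(\nabla_{\gamma\overline{\eta}}P)=-P$) by these vanishing properties.
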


\begin{proof}~\par
\vspace{4pt}
 \noindent\textbf{(b)} Follows from Lemma \ref{lem.2}{(c)} by setting $X=\beta\overline{X}$ and $Y=\beta\overline{Y}$,
 taking into account the fact that $\nabla
 g=0$.

\vspace{4pt}
 \noindent\textbf{(c)} We use the identity $\mathfrak{R}(X,Y)=-v[hX,hY]$
 \cite{r97} together with  Lemma \ref{bracket}(a) and the fact
 that $K\circ \gamma=id_{\cp}$ and $K\circ \beta=0$:
  \begin{eqnarray*}
   v[h X, h Y]&=&\gamma \circ K[(\beta\circ\rho) X,(\beta\circ\rho)
   Y]\\
   &=& \gamma \circ K\{ \gamma\widehat{R}(\rho X,\rho Y)
     + \beta(D_{h X}\rho Y-
     D_{h Y}\rho X-Q(\rho X,\rho Y))\}\\
    &=& \gamma\widehat{R}(\rho X,\rho Y),
 \end{eqnarray*}
from which the result.

\vspace{4pt}
 \noindent\textbf{(d)}  Follows from Proposition \ref{pp.1}(c) and Theorem \ref{th.1}(b).

\vspace{4pt}
 \noindent\textbf{(e)} and \textbf{(f)}  follow from Proposition \ref{pp.2}(d) and (c) respectively,
 noting that $Q=0$.

\vspace{4pt}
 \noindent\textbf{(g)} Follows from (f), making use of the obtained
properties of $T$, $S$ and $P$.
\end{proof}


\Section{Fundamental tensors associated\vspace{5pt} with the Berwald
connection}

In this section, we investigate  the fundamental properties of the
most important geometric objects associated with  Berwald
connection.

\vspace{5pt}
 \par
 The following theorem guarantees the existence
and uniqueness of the Berwald connection.\vspace{-0.2cm}
\begin{thm}\emph{\cite{r92}} \label{bth2.h2} Let $(M,L)$ be a Finsler manifold. There exists a
unique regular connection ${{D}}^{\circ}$ on $\pi^{-1}(TM)$ such
that
\begin{description}
 \item[(i)] $D^{\circ}_{h^{\circ}X}L=0$,
  \item[(ii)]   ${{D}}^{\circ}$ is torsion-free\,{\em:} ${\textbf{T}}^{\circ}=0 $,
  \item[(iii)]The (v)hv-torsion tensor $\widehat{P^{\circ}}$ of ${D}^{\circ}$ vanishes:
   $\widehat{P^{\circ}}(\overline{X},\overline{Y})= 0$.
  \end{description}
  \par Such a connection is called the Berwald
  connection associated with the Finsler manifold $(M,L)$.\\
  Moreover,  the nonlinear connection associated with
 the Berwald connection $D^{\circ}$ coincides with the Barthel connection{\,\em:}
$\Gamma_{D^{\circ}}=[J,G]$. Consequently, $\beta^{\circ}=\beta$ and
$K^{\circ}=K$.
\end{thm}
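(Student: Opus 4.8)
\noindent\textit{Proof (outline).}\quad The plan is the classical two-step one: first I would show that the three axioms determine $D^{\circ}$ uniquely, and then I would exhibit such a connection explicitly from the Barthel connection $[J,G]$ of Theorem~\ref{th.9a}. The one observation to carry throughout is that axiom~(ii), $\textbf{T}^{\circ}=0$, forces \emph{all} the torsion tensors of $D^{\circ}$ to vanish, since $Q^{\circ}$ and $T^{\circ}$ are obtained from $\textbf{T}^{\circ}$ by restriction; in particular $T^{\circ}(\overline{X},\overline{\eta})=0$, so Proposition~\ref{eqv.} and Lemma~\ref{bracket} are available with $Q^{\circ}=T^{\circ}=0$.

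For uniqueness, Proposition~\ref{eqv.} first gives $h^{\circ}=\beta^{\circ}\circ\rho$, $v^{\circ}=\gamma\circ K^{\circ}$, $K^{\circ}\circ\gamma=\mathrm{id}$, and that $\Gamma_{D^{\circ}}=2\beta^{\circ}\circ\rho-I$ is a nonlinear connection, necessarily torsion-free since $Q^{\circ}=0$. A one-line expansion of $\textbf{K}^{\circ}(\beta^{\circ}\overline{X},\gamma\overline{Y})\overline{\eta}$, using $D^{\circ}_{\gamma\overline{Y}}\overline{\eta}=\overline{Y}$ and $D^{\circ}_{\beta^{\circ}\overline{X}}\overline{\eta}=0$, yields $\widehat{P^{\circ}}(\overline{X},\overline{Y})=K^{\circ}[\beta^{\circ}\overline{X},\gamma\overline{Y}]-D^{\circ}_{\beta^{\circ}\overline{X}}\overline{Y}$; specialising $\overline{Y}=\overline{\eta}$, axiom~(iii) gives $v^{\circ}[\mathcal{C},h^{\circ}Z]=0$, so $\Gamma_{D^{\circ}}$ is homogeneous, while axiom~(i), read as $(h^{\circ}X)(L)=0$, says it is conservative. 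By the uniqueness half of Theorem~\ref{th.9a}, $\Gamma_{D^{\circ}}=[J,G]$, whence $\beta^{\circ}=\beta$ and $K^{\circ}=K$. Lemma~\ref{bracket}(b), now reading $[\gamma\overline{X},\beta\overline{Y}]=\beta\,D^{\circ}_{\gamma\overline{X}}\overline{Y}-\gamma\,D^{\circ}_{\beta\overline{Y}}\overline{X}$, then gives on applying $\rho$ and $K$
\begin{equation*}
D^{\circ}_{\gamma\overline{X}}\overline{Y}=\rho[\gamma\overline{X},\beta\overline{Y}],\qquad D^{\circ}_{\beta\overline{X}}\overline{Y}=K[\beta\overline{X},\gamma\overline{Y}],
\end{equation*}
and since every $X\in\cpp$ splits into a $\beta$-part and a $\gamma$-part, $D^{\circ}$ is completely determined.

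For existence I would run this backwards. With $h,v,\beta$ the data of the Barthel connection, set
\begin{equation*}
D^{\circ}_{X}\overline{Y}:=\rho[vX,\beta\overline{Y}]+\gamma^{-1}\big(v[hX,\gamma\overline{Y}]\big).
\end{equation*}
A routine check (the extra bracket terms reorganise correctly, using $\rho(vX)=0$ and $v(hX)=0$) shows that $D^{\circ}$ is a linear connection on $\p$. Its deflection map is $K^{\circ}X=\rho[vX,\beta\overline{\eta}]+\gamma^{-1}(v[hX,\mathcal{C}])$; the second summand vanishes on horizontal $X$ because $[J,G]$ is homogeneous (Theorem~\ref{th.9a}), and for $X=\gamma\overline{Z}$ the first equals $\rho[\gamma\overline{Z},\beta\overline{\eta}]=\overline{Z}$ by the identity $\overline{Z}=\rho[\gamma\overline{Z},\beta\overline{\eta}]$ of~\cite{r92}; hence $\ker K^{\circ}=H$ and $K^{\circ}\circ\gamma=\mathrm{id}$, so $D^{\circ}$ is regular with $\Gamma_{D^{\circ}}=[J,G]$, $\beta^{\circ}=\beta$, $K^{\circ}=K$. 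Axiom~(i) is then the conservativity of $[J,G]$; axiom~(iii) holds since $\widehat{P^{\circ}}(\overline{X},\overline{Y})=K^{\circ}[\beta\overline{X},\gamma\overline{Y}]-D^{\circ}_{\beta\overline{X}}\overline{Y}$ while, by construction, $D^{\circ}_{\beta\overline{X}}\overline{Y}=\gamma^{-1}(v[\beta\overline{X},\gamma\overline{Y}])=K^{\circ}[\beta\overline{X},\gamma\overline{Y}]$; and for axiom~(ii) I would evaluate $\textbf{T}^{\circ}(X,Y)=D^{\circ}_{X}\rho Y-D^{\circ}_{Y}\rho X-\rho[X,Y]$ on the three spanning types of pairs: $(\gamma\overline{X},\gamma\overline{Y})$, zero by involutivity of $V(\T M)$; $(\gamma\overline{X},\beta\overline{Y})$, zero straight from the definition of $D^{\circ}_{\gamma\overline{X}}\overline{Y}$; and $(\beta\overline{X},\beta\overline{Y})$, where it collapses to $\gamma^{-1}v[\beta\overline{X},\gamma\overline{Y}]-\gamma^{-1}v[\beta\overline{Y},\gamma\overline{X}]-\rho[\beta\overline{X},\beta\overline{Y}]$.

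The main obstacle is exactly this last expression: after applying $\gamma$ and writing $\gamma\overline{X}=J\beta\overline{X}$, it is the torsion of the nonlinear connection $[J,G]$ evaluated on a horizontal frame, so the real work is to match it with the Klein--Grifone torsion that vanishes by Theorem~\ref{th.9a}. Everything else is routine; the only further input worth isolating is the homogeneity relation $v[\mathcal{C},hZ]=0$, used both to get $\ker K^{\circ}=H$ and, in the uniqueness half, to pin $\Gamma_{D^{\circ}}$ down to $[J,G]$. As an alternative route to existence, one could instead start from the Cartan connection $\nabla$ of Theorem~\ref{th.1} and obtain $D^{\circ}$ by adjusting $\nabla$ with a tensor assembled from its $(\mathrm{h})\mathrm{hv}$-torsion $T$ and the identities of Proposition~\ref{pp.3} and Theorem~\ref{.thm1}, trading part of the bracket bookkeeping for facts already established.
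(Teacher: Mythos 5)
The paper itself offers no proof of Theorem \ref{bth2.h2}: it is imported verbatim from \cite{r92}, so there is nothing here to compare your argument with line by line. Judged on its own terms, your outline is the standard existence--uniqueness scheme and the skeleton is sound: axiom (ii) gives $T^{\circ}(\overline{X},\overline{\eta})=0$ so Proposition \ref{eqv.} and Lemma \ref{bracket} apply; the identity $\widehat{P^{\circ}}(\overline{X},\overline{Y})=K^{\circ}[\beta^{\circ}\overline{X},\gamma\overline{Y}]-D^{\circ}_{\beta^{\circ}\overline{X}}\overline{Y}$ is correct and does yield homogeneity from (iii); axiom (i) is conservativity; the recovery formulas $D^{\circ}_{\gamma\overline{X}}\overline{Y}=\rho[\gamma\overline{X},\beta\overline{Y}]$, $D^{\circ}_{\beta\overline{X}}\overline{Y}=K[\beta\overline{X},\gamma\overline{Y}]$ are right; and your explicit $D^{\circ}_{X}\overline{Y}=\rho[vX,\beta\overline{Y}]+\gamma^{-1}(v[hX,\gamma\overline{Y}])$ built from the Barthel data is indeed a regular linear connection satisfying (i) and (iii).

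The one substantive hole is the step you yourself defer as ``the real work'', and it actually occurs in \emph{both} halves, not just the existence half. The link you need is the identity, valid for any regular $D$ with $T(\overline{X},\overline{\eta})=0$, that the Grifone torsion $t=\tfrac{1}{2}[J,\Gamma_{D}]$ is semi-basic and satisfies $t(\beta\overline{X},\beta\overline{Y})=\gamma\{Q(\overline{X},\overline{Y})+\widehat{P}(\overline{X},\overline{Y})-\widehat{P}(\overline{Y},\overline{X})\}$ (a short Fr\"olicher--Nijenhuis computation using Lemma \ref{bracket}(a),(b)). In the uniqueness half your parenthetical ``necessarily torsion-free since $Q^{\circ}=0$'' is not correct as a justification: $Q^{\circ}=0$ alone does not kill $t$; you also need the antisymmetric part of $\widehat{P^{\circ}}$ to vanish, which here is supplied by axiom (iii) --- so the conclusion stands, but only once the displayed identity is proved and (iii) is invoked. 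In the existence half the same identity, read backwards with $\widehat{P^{\circ}}=0$ already established, converts the vanishing of the Grifone torsion of $[J,G]$ (Theorem \ref{th.9a}) into $\textbf{T}^{\circ}(\beta\overline{X},\beta\overline{Y})=0$, closing axiom (ii). So the proposal is a correct outline, but to be a proof it must include this torsion-matching computation rather than flag it; once that is written out, everything else you state checks out.
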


\begin{thm}\emph{\cite{r92}}\label{th.5} The Berwald connection $D^{\circ}$ is explicitly expressed in
terms of the Cartan connection $\nabla$ in the form:
 \vspace{-0.1cm}
  \begin{equation}\label{b10}
     {{D}}^{\circ}_{X}\overline{Y} = \nabla _{X}\overline{Y}
+{\widehat{P}}(\rho X,\overline{Y}) -T(K X,\overline{Y}).
\vspace{-0.2cm}
\end{equation}
In particular, we have\vspace{-0.1cm}
\begin{description}
  \item[(a)] $ {{D}}^{\circ}_{\gamma \overline{X}}\overline{Y}=\nabla _{\gamma
  \overline{X}}\overline{Y}-T(\overline{X},\overline{Y})$.

 \item[(b)] $ {{D}}^{\circ}_{\beta \overline{X}}\overline{Y}=\nabla _{\beta
  \overline{X}}\overline{Y}+\widehat{P}(\overline{X},\overline{Y}).$
\end{description}
\end{thm}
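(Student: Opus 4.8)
The plan is to construct the connection on the right-hand side, check that it is a regular connection satisfying the three characterizing properties (i)--(iii) of the Berwald connection in Theorem \ref{bth2.h2}, and then invoke uniqueness. Define
$$\widetilde{D}_{X}\overline{Y}:=\nabla_{X}\overline{Y}+\widehat{P}(\rho X,\overline{Y})-T(KX,\overline{Y}),\qquad X\in\cpp,\ \overline{Y}\in\cp,$$
where $T$, $\widehat{P}$ and $K$ denote the (h)hv-torsion, the (v)hv-torsion and the deflection map of the Cartan connection $\nabla$. First I would verify that $\widetilde{D}$ is a linear connection on $\pi^{-1}(TM)$: additivity and $\Real$-bilinearity are clear, $\mathfrak{F}(\T M)$-linearity in $X$ follows since $\rho$ and $K$ are bundle morphisms and $T$, $\widehat{P}$ are tensorial, and the Leibniz rule in $\overline{Y}$ reduces to that of $\nabla$ because the two correction terms are $\mathfrak{F}(\T M)$-linear in $\overline{Y}$.

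Next I would identify the deflection map of $\widetilde{D}$. From $\widetilde{D}_{X}\overline{\eta}=K(X)+\widehat{P}(\rho X,\overline{\eta})-T(KX,\overline{\eta})$ together with $T(\overline{Z},\overline{\eta})=0$ (Proposition \ref{pp.3}(c)) and $\widehat{P}(\overline{Z},\overline{\eta})=\widehat{P}(\overline{\eta},\overline{Z})=0$ (Theorem \ref{.thm1}(f),(d)) one gets $\widetilde{K}=K$. Hence $\widetilde{D}$ has the same horizontal distribution as $\nabla$, so it is regular with $\widetilde{\beta}=\beta$, and the nonlinear connection associated with it is $2\beta\circ\rho-I=\Gamma_{\nabla}$, which is the Barthel connection by Theorem \ref{c.ba.}. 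Since the Barthel connection is conservative (Theorem \ref{th.9a}), the $\widetilde{D}$-horizontal derivative of $L$ vanishes, which is property (i). For property (ii) I would expand
$$\widetilde{\textbf{T}}(X,Y)=\textbf{T}(X,Y)+\bigl(\widehat{P}(\rho X,\rho Y)-\widehat{P}(\rho Y,\rho X)\bigr)-\bigl(T(KX,\rho Y)-T(KY,\rho X)\bigr),$$
$\textbf{T}$ being the classical torsion of $\nabla$; the middle bracket vanishes by symmetry of $\widehat{P}$ (Theorem \ref{.thm1}(f)), and decomposing $X=\beta\rho X+\gamma KX$, $Y=\beta\rho Y+\gamma KY$ and using $Q=0$ for $\nabla$ (Theorem \ref{th.1}(ii)), the antisymmetry of $\textbf{T}$, and $\textbf{T}(\gamma\overline{X},\gamma\overline{Y})=0$ (which follows from $\rho\circ\gamma=0$ and Lemma \ref{bracket}(c)) gives $\textbf{T}(X,Y)=T(KX,\rho Y)-T(KY,\rho X)$; hence $\widetilde{\textbf{T}}=0$.

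The substantive step is property (iii), that the (v)hv-torsion $\widehat{\widetilde{P}}$ of $\widetilde{D}$ vanishes. Since $\widetilde{\beta}=\beta$ and $\widetilde{D}_{W}\overline{\eta}=K(W)$, taking $Z=\beta\overline{\eta}$ (so that $\rho Z=\overline{\eta}$) in the definition of the hv-curvature yields
$$\widehat{\widetilde{P}}(\overline{X},\overline{Y})=\widetilde{\textbf{K}}(\beta\overline{X},\gamma\overline{Y})\overline{\eta}=-\widetilde{D}_{\beta\overline{X}}(K\gamma\overline{Y})+\widetilde{D}_{\gamma\overline{Y}}(K\beta\overline{X})+K[\beta\overline{X},\gamma\overline{Y}]=-\widetilde{D}_{\beta\overline{X}}\overline{Y}+K[\beta\overline{X},\gamma\overline{Y}],$$
using $K\circ\gamma=id$ and $K\circ\beta=0$ for $\nabla$ (Proposition \ref{eqv.}). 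Applying $K$ to Lemma \ref{bracket}(b) for $\nabla$ gives $K[\beta\overline{X},\gamma\overline{Y}]=\widehat{P}(\overline{X},\overline{Y})+\nabla_{\beta\overline{X}}\overline{Y}$, whereas from the definition $\widetilde{D}_{\beta\overline{X}}\overline{Y}=\nabla_{\beta\overline{X}}\overline{Y}+\widehat{P}(\overline{X},\overline{Y})$; the two cancel, so $\widehat{\widetilde{P}}=0$. By the uniqueness part of Theorem \ref{bth2.h2} we conclude $\widetilde{D}=D^{\circ}$, which is exactly the stated formula. The particular cases (a) and (b) follow by specialization: for $X=\gamma\overline{X}$ one has $\rho X=0$ and $KX=\overline{X}$, and for $X=\beta\overline{X}$ one has $\rho X=\overline{X}$ and $KX=0$. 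I expect the computation of $\widehat{\widetilde{P}}$ to be the main obstacle, since it hinges on the bracket identity of Lemma \ref{bracket}(b) and on carefully treating the torsion and curvature objects appearing there as those of $\nabla$, not of $\widetilde{D}$.
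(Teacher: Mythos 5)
Your proposal is correct, and it is essentially the natural argument: build the candidate connection $\widetilde{D}$ from the right-hand side, check the three axioms of Theorem \ref{bth2.h2}, and conclude by uniqueness. Note that the paper itself gives no proof of this statement (it is quoted from \cite{r92}), so there is nothing in-text to compare against; but your verification is sound and, importantly, free of circularity, since every ingredient you invoke about $\nabla$ (namely $T(\overline{X},\overline{\eta})=0$ from Proposition \ref{pp.3}(c), $\widehat{P}(\overline{\eta},\cdot)=0$ and the symmetry of $\widehat{P}$ from Theorem \ref{.thm1}, and the bracket formula of Lemma \ref{bracket}(b)) is established independently of the Berwald connection. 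The key computations all check out: $\widetilde{K}=K$ gives regularity, the same horizontal map $\beta$ and hence the Barthel connection, so (i) reduces to conservativity of $\Gamma=[J,G]$; the decomposition $X=\beta\rho X+\gamma KX$ together with $Q=0$, the symmetry of $\widehat{P}$ and $\textbf{T}(\gamma\overline{X},\gamma\overline{Y})=0$ gives $\widetilde{\textbf{T}}=0$; and the evaluation of $\widetilde{\textbf{K}}(\beta\overline{X},\gamma\overline{Y})\overline{\eta}$ against $K[\beta\overline{X},\gamma\overline{Y}]=\widehat{P}(\overline{X},\overline{Y})+\nabla_{\beta\overline{X}}\overline{Y}$ indeed yields $\widehat{\widetilde{P}}=0$, so uniqueness forces $\widetilde{D}=D^{\circ}$.
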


Concerning the metricity of the Berwald connection, we
have\vspace{-0.2cm}
\begin{lem}\label{le.3} For the Berwald connection  $D^{\circ}$, we have\vspace{-0.2cm}
\begin{description}
 \item[(a)]$(D^{\circ}_{\gamma \overline{X}}
g)(\overline{Y},\overline{Z})=2T(\overline{X},\overline{Y},\overline{Z})$,

 \item[(b)]$(D^{\circ}_{\beta \overline{X}}
g)(\overline{Y},\overline{Z})=-2\widehat{P}(\overline{X},\overline{Y},\overline{Z})$
\end{description}
\end{lem}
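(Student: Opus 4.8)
The plan is to compute the metricity defect of the Berwald connection directly from the explicit relation between $D^{\circ}$ and the Cartan connection $\nabla$ given in Theorem \ref{th.5}, using the fact that $\nabla g = 0$. For part (a), I would start from the definition of the covariant derivative of $g$:
$$(D^{\circ}_{\gamma\overline{X}}g)(\overline{Y},\overline{Z}) = \gamma\overline{X}\cdot g(\overline{Y},\overline{Z}) - g(D^{\circ}_{\gamma\overline{X}}\overline{Y},\overline{Z}) - g(\overline{Y},D^{\circ}_{\gamma\overline{X}}\overline{Z}).$$
Then I would substitute the formula $D^{\circ}_{\gamma\overline{X}}\overline{Y} = \nabla_{\gamma\overline{X}}\overline{Y} - T(\overline{X},\overline{Y})$ from Theorem \ref{th.5}(a) into the two terms on the right, and regroup: the terms involving $\nabla$ reassemble into $(\nabla_{\gamma\overline{X}}g)(\overline{Y},\overline{Z})$, which vanishes by Theorem \ref{th.1}(i), leaving
$$(D^{\circ}_{\gamma\overline{X}}g)(\overline{Y},\overline{Z}) = g(T(\overline{X},\overline{Y}),\overline{Z}) + g(\overline{Y},T(\overline{X},\overline{Z})).$$
Now by Proposition \ref{pp.3}(a) (the symmetry $T(\overline{X},\overline{Y},\overline{Z}) = T(\overline{X},\overline{Z},\overline{Y})$) the second term equals $g(T(\overline{X},\overline{Z}),\overline{Y}) = T(\overline{X},\overline{Z},\overline{Y}) = T(\overline{X},\overline{Y},\overline{Z})$, which coincides with the first term, so their sum is $2T(\overline{X},\overline{Y},\overline{Z})$, recalling the notation (\ref{tor.g}).

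For part (b), the argument is entirely parallel: expand $(D^{\circ}_{\beta\overline{X}}g)(\overline{Y},\overline{Z})$ via the Leibniz-type expansion, substitute $D^{\circ}_{\beta\overline{X}}\overline{Y} = \nabla_{\beta\overline{X}}\overline{Y} + \widehat{P}(\overline{X},\overline{Y})$ from Theorem \ref{th.5}(b), peel off the vanishing $(\nabla_{\beta\overline{X}}g)(\overline{Y},\overline{Z}) = 0$, and obtain
$$(D^{\circ}_{\beta\overline{X}}g)(\overline{Y},\overline{Z}) = -g(\widehat{P}(\overline{X},\overline{Y}),\overline{Z}) - g(\overline{Y},\widehat{P}(\overline{X},\overline{Z})).$$
Here I need the symmetry of $\widehat{P}$ in its last two slots with respect to $g$; this follows from Theorem \ref{.thm1}(e), namely $\widehat{P}(\overline{X},\overline{Y}) = (\nabla_{\beta\overline{\eta}}T)(\overline{X},\overline{Y})$, combined with Proposition \ref{pp.3}(b), which gives $g((\nabla_{W}T)(\overline{X},\overline{Y}),\overline{Z}) = g((\nabla_{W}T)(\overline{X},\overline{Z}),\overline{Y})$; taking $W = \beta\overline{\eta}$ yields $\widehat{P}(\overline{X},\overline{Y},\overline{Z}) = \widehat{P}(\overline{X},\overline{Z},\overline{Y})$. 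Hence both terms equal $-\widehat{P}(\overline{X},\overline{Y},\overline{Z})$ and their sum is $-2\widehat{P}(\overline{X},\overline{Y},\overline{Z})$.

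The only subtlety — the "main obstacle," though it is mild — is making sure the right symmetry of $\widehat{P}$ is available at this point in the development; one must invoke the $g$-symmetry of $(\nabla_W T)$ from Proposition \ref{pp.3}(b) rather than the (simpler) symmetry of $T$ itself, since $\widehat{P}$ is a derivative of $T$. Everything else is a bookkeeping exercise in the Leibniz rule for $D^{\circ}g$ together with $\nabla g = 0$. I would also remark that (a) and (b) together show the Berwald connection fails to be metric precisely by the tensors $T$ and $\widehat{P}$, which is the conceptual point of the lemma and explains why it is recorded separately.
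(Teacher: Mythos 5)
Your proof is correct, and it follows exactly the route the paper intends (the paper in fact states Lemma \ref{le.3} without proof, immediately after Theorem \ref{th.5}): expand $(D^{\circ}g)$ by the Leibniz rule, substitute $D^{\circ}_{\gamma\overline{X}}\overline{Y}=\nabla_{\gamma\overline{X}}\overline{Y}-T(\overline{X},\overline{Y})$ and $D^{\circ}_{\beta\overline{X}}\overline{Y}=\nabla_{\beta\overline{X}}\overline{Y}+\widehat{P}(\overline{X},\overline{Y})$, and use $\nabla g=0$ together with the symmetries of $T$ and $\widehat{P}$. You also correctly identify and justify the one non-trivial ingredient, namely the $g$-symmetry $\widehat{P}(\overline{X},\overline{Y},\overline{Z})=\widehat{P}(\overline{X},\overline{Z},\overline{Y})$ via $\widehat{P}=\nabla_{\beta\overline{\eta}}T$ (Theorem \ref{.thm1}(e)) and Proposition \ref{pp.3}(b), all of which are established earlier in the paper, so no circularity arises.
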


\begin{prop}\label{.pp} The v-curvature  $S^{\circ}$ of the Berwald connection vanishes\,\emph{:} $ S^{\circ}=0$.
\end{prop}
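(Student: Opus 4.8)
The plan is to reduce the vanishing of $S^{\circ}$ to the already-established vanishing of the v-curvature $S$ of the Cartan connection (Theorem \ref{th..2}(f), or equivalently equations (\ref{eq..2}) and (\ref{eq.s.hat})), by using the explicit comparison formula in Theorem \ref{th.5}(a): $D^{\circ}_{\gamma\overline{X}}\overline{Y}=\nabla_{\gamma\overline{X}}\overline{Y}-T(\overline{X},\overline{Y})$. Since $S^{\circ}(\overline{X},\overline{Y})\overline{Z}$ is built purely from the vertical covariant derivatives $D^{\circ}_{\gamma(\cdot)}$ and the bracket $[\gamma\overline{X},\gamma\overline{Y}]$, and since the Berwald and Cartan connections share the same horizontal map ($\beta^{\circ}=\beta$, $K^{\circ}=K$, so $\gamma$ is unchanged), everything can be transported to the Cartan side.

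First I would write out, directly from the definition of $S^{\circ}$ together with the definition of the (classical) curvature via the bracket,
\[
S^{\circ}(\overline{X},\overline{Y})\overline{Z}
=-D^{\circ}_{\gamma\overline{X}}D^{\circ}_{\gamma\overline{Y}}\overline{Z}
+D^{\circ}_{\gamma\overline{Y}}D^{\circ}_{\gamma\overline{X}}\overline{Z}
+D^{\circ}_{[\gamma\overline{X},\gamma\overline{Y}]}\overline{Z}.
\]
Then I would substitute $D^{\circ}_{\gamma\overline{X}}\overline{Y}=\nabla_{\gamma\overline{X}}\overline{Y}-T(\overline{X},\overline{Y})$ into each term. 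The last term is handled via Lemma \ref{bracket}(c): $[\gamma\overline{X},\gamma\overline{Y}]=\gamma(\nabla_{\gamma\overline{X}}\overline{Y}-\nabla_{\gamma\overline{Y}}\overline{X}+\widehat{S}(\overline{X},\overline{Y}))$, which is the same for $D^{\circ}$ since the bracket is intrinsic; one then applies $D^{\circ}_{\gamma(\cdot)}$ using the comparison formula again. Expanding and collecting, the $\nabla$-only terms reassemble into $S(\overline{X},\overline{Y})\overline{Z}$, which vanishes by Theorem \ref{th..2}(f), and the remaining terms involving $T$ should collapse: the genuinely second-order $T$-terms cancel by symmetry of $T$ (Proposition \ref{pp.3}(f)), while the mixed terms of the form $T(\overline{X},\nabla_{\gamma\overline{Y}}\overline{Z})$ versus $(\nabla_{\gamma\overline{Y}}T)(\overline{X},\overline{Z})$ recombine using equation (\ref{3.eq.8}), $(\nabla_{\gamma\overline{X}}T)(\overline{Y},\overline{Z})-(\nabla_{\gamma\overline{Y}}T)(\overline{X},\overline{Z})=T(\widehat{S}(\overline{X},\overline{Y}),\overline{Z})$, together with $\widehat{S}=0$ (equation (\ref{eq.s.hat})).

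An alternative, cleaner route I might prefer: use Lemma \ref{le.3}(a), which says $(D^{\circ}_{\gamma\overline{X}}g)(\overline{Y},\overline{Z})=2T(\overline{X},\overline{Y},\overline{Z})$, so the Berwald connection is \emph{not} v-metric but its non-metricity along vertical directions is exactly controlled by the totally symmetric tensor $T$. Feeding this into Lemma \ref{lem.2}(c) with $X=\gamma\overline{X}$, $Y=\gamma\overline{Y}$ expresses $g(S^{\circ}(\overline{X},\overline{Y})\overline{Z},\overline{W})+g(S^{\circ}(\overline{X},\overline{Y})\overline{W},\overline{Z})$ in terms of vertical derivatives of $T$; but $S^{\circ}$ also satisfies a Bianchi-type cyclic identity from Proposition \ref{pp.2}(a) applied to $D^{\circ}$ (noting $T^{\circ}$, the (h)hv-torsion of Berwald, is $0$ since ${\textbf T}^{\circ}=0$, and $\widehat{S^{\circ}}=S^{\circ}(\cdot,\cdot)\overline{\eta}=0$ because $D^{\circ}\overline{\eta}$ restricted vertically gives $K^{\circ}=\gamma^{-1}$). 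Combining the symmetry/antisymmetry relations the way Riemannian curvature symmetries are derived from pair-symmetry forces $S^{\circ}=0$.

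The main obstacle I anticipate is bookkeeping: in the first approach, making sure every $T$-term is accounted for and that the recombination into $(\nabla_{\gamma(\cdot)}T)$ genuinely matches (\ref{3.eq.8}) rather than leaving a residual term; in the second approach, the subtlety is justifying that $S^{\circ}$ still obeys enough of the curvature symmetries despite $D^{\circ}$ being non-metric — one must check that $\widehat{S^{\circ}}=0$ and that the relevant identities of \S2 apply with $Q^{\circ}=0$, $T^{\circ}=0$. Either way the conceptual content is the same: the vertical part of Berwald equals the vertical part of Cartan "twisted by $T$", and that twist is harmless precisely because $T$ is totally symmetric and $\widehat S=0$. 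I would write up the first approach as the primary proof and relegate the symmetry argument to a remark.
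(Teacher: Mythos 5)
Your primary route has a genuine error at its key step: you assert that the $\nabla$-only terms reassemble into $S(\overline{X},\overline{Y})\overline{Z}$, ``which vanishes by Theorem \ref{th..2}(f)''. Theorem \ref{th..2}(f) does \emph{not} say that the Cartan v-curvature vanishes; it only says $\widehat{S}=0$ and $S(\overline{X},\overline{\eta})\overline{Y}=0$. For a general Finsler manifold $S\neq 0$ (see Table 1, and Theorem \ref{th..2}(c)--(d), which give $S$ explicitly in terms of $T$). Likewise, the ``genuinely second-order $T$-terms'' $T(\overline{X},T(\overline{Y},\overline{Z}))-T(\overline{Y},T(\overline{X},\overline{Z}))$ do \emph{not} cancel by total symmetry of $T$: total symmetry of the trilinear form $T(\overline{X},\overline{Y},\overline{Z})$ does not make the composed map symmetric, and indeed by equation (\ref{eq..2}) this combination is exactly $S(\overline{X},\overline{Y})\overline{Z}$. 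Your two mistakes happen to compensate: if you carry out the expansion carefully, using $[\gamma\overline{X},\gamma\overline{Y}]=\gamma(\nabla_{\gamma\overline{X}}\overline{Y}-\nabla_{\gamma\overline{Y}}\overline{X})$ (Lemma \ref{bracket}(c) with $\widehat{S}=0$) and regrouping the mixed terms into $(\nabla_{\gamma\overline{X}}T)(\overline{Y},\overline{Z})-(\nabla_{\gamma\overline{Y}}T)(\overline{X},\overline{Z})=0$ via (\ref{3.eq.8}) and (\ref{eq.s.hat}), you get
\begin{equation*}
S^{\circ}(\overline{X},\overline{Y})\overline{Z}
= S(\overline{X},\overline{Y})\overline{Z}
-\bigl(T(\overline{X},T(\overline{Y},\overline{Z}))-T(\overline{Y},T(\overline{X},\overline{Z}))\bigr)=0
\end{equation*}
by Theorem \ref{th..2}(c). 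So the comparison-with-Cartan strategy is salvageable, but only after replacing ``$S=0$'' and ``quadratic terms cancel by symmetry'' by the correct cancellation of the quadratic block against $S$ itself.

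You also missed a much shorter argument, which is the one the paper uses: Proposition \ref{pp.1}(a) expresses the v-curvature of \emph{any} regular connection (with $T(\overline{X},\overline{\eta})=0$) entirely in terms of that connection's own (h)hv-torsion. For the Berwald connection $\textbf{T}^{\circ}=0$, hence $T^{\circ}=0$ (so the hypothesis is trivially met), and every term on the right-hand side of Proposition \ref{pp.1}(a) vanishes, giving $S^{\circ}=0$ in one line --- no comparison with the Cartan connection, and no use of the Cartan identities (\ref{eq..2}), (\ref{eq.s.hat}) at all. Your sketched alternative via Lemma \ref{le.3}(a), Lemma \ref{lem.2}(c) and Proposition \ref{pp.2}(a) is close in spirit but points at the wrong identity (the Bianchi-type identity for $S$ rather than the structure equation for $S$), and the claim that Riemannian-style symmetry juggling ``forces $S^{\circ}=0$'' is not substantiated; had you invoked Proposition \ref{pp.1}(a) instead, you would have landed on the paper's proof.
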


\begin{proof}
 Since $D^{\circ}$ is regular with $\textbf{T}^{\circ}=0$,
the result follows  from Proposition \ref{pp.1}(a).
\end{proof}


\begin{thm}\label{th} The hv-curvature  tensor $P^{\circ}$ of the Berwald connection
 has the properties\,:\vspace{-0.2cm}
\begin{description}

\item[(a)] $\widehat{P^{\circ}}(\overline{X},\overline{Y})=0$,

 \item[(b)] $P^{\circ}(\overline{X},\overline{Y},\overline{Z},\overline{W})
 +P^{\circ}(\overline{X},\overline{Y},\overline{W},\overline{Z})=2(D^\circ_{\beta \overline{X}}T)(\overline{Y},\overline{Z},\overline{W})+
           2 (D^{\circ}_{\gamma
           \overline{Y}}\widehat{P})(\overline{X},\overline{Z},\overline{W})$,

\item[(c)] $P^{\circ}(\overline{X},\overline{Y})\overline{Z}=
P^{\circ}(\overline{Z},\overline{Y})\overline{X}$,

 \item[(d)] $(D^{\circ}_{\gamma \overline{X}}P^{\circ})(\overline{Y},
    \overline{Z},  \overline{W})
    =(D^{\circ}_{\gamma \overline{Z}}P^{\circ})(\overline{Y},
    \overline{X},  \overline{W})$,

\item[(e)]  $P^{\circ}$ is totally symmetric,

\item[(f)] $(D^{\circ}_{\gamma \overline{\eta}}P^{\circ})(\overline{X},
    \overline{Y},  \overline{Z})=-P^{\circ}(\overline{X}, \overline{Y})
    \overline{Y}$.
\end{description}
\end{thm}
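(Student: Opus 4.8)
The plan is to settle the six assertions in turn, reducing (a)--(d) and (f) to the defining axioms of the Berwald connection together with the general identities of \S2 applied to $D^{\circ}$ (recall $\textbf{T}^{\circ}=0$, $\widehat{P^{\circ}}=0$, $S^{\circ}=0$, $K^{\circ}=K$), and treating (e) as the analytic core. Assertion (a) is nothing but axiom (iii) of Theorem \ref{bth2.h2}. For (b) I would apply the non-metricity--curvature identity of Lemma \ref{lem.2}(c) to $D^{\circ}$ with $X=\beta\overline{X}$ and $Y=\gamma\overline{Y}$; the left-hand side is then $P^{\circ}(\overline{X},\overline{Y},\overline{Z},\overline{W})+P^{\circ}(\overline{X},\overline{Y},\overline{W},\overline{Z})$, while on the right one substitutes the metricity defects of $D^{\circ}$ recorded in Lemma \ref{le.3}, namely $D^{\circ}_{\gamma\overline{V}}g=2T(\overline{V},\cdot,\cdot)$ and $D^{\circ}_{\beta\overline{V}}g=-2\widehat{P}(\overline{V},\cdot,\cdot)$, together with the bracket $[\beta\overline{X},\gamma\overline{Y}]$ from Lemma \ref{bracket}(b) (applicable since $T^{\circ}(\overline{X},\overline{\eta})=0$), which collapses to $\gamma(D^{\circ}_{\beta\overline{X}}\overline{Y})-\beta(D^{\circ}_{\gamma\overline{Y}}\overline{X})$ because $T^{\circ}=\widehat{P^{\circ}}=0$. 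Expanding the Leibniz terms, the contributions in which $D^{\circ}$ hits one of the argument fields cancel against the bracket term, leaving $2(D^{\circ}_{\beta\overline{X}}T)(\overline{Y},\overline{Z},\overline{W})+2(D^{\circ}_{\gamma\overline{Y}}\widehat{P})(\overline{X},\overline{Z},\overline{W})$.

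For (c) I would use Proposition \ref{pp.1}(b) for $D^{\circ}$: since $\textbf{T}^{\circ}=0$ forces $T^{\circ}=Q^{\circ}=0$, the entire right-hand side of that identity vanishes, giving $P^{\circ}(\overline{X},\overline{Y})\overline{Z}=P^{\circ}(\overline{Z},\overline{Y})\overline{X}$. For (d) I would use Proposition \ref{pp.2}(b) for $D^{\circ}$: with $S^{\circ}=0$ (Proposition \ref{.pp}), $T^{\circ}=0$ and $\widehat{P^{\circ}}=\widehat{S^{\circ}}=0$, every term of that identity reduces to zero except the two $P^{\circ}$-derivative terms, so $(D^{\circ}_{\gamma\overline{X}}P^{\circ})(\overline{Z},\overline{Y},\overline{W})=(D^{\circ}_{\gamma\overline{Y}}P^{\circ})(\overline{Z},\overline{X},\overline{W})$, which is (d) after relabelling.

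Assertion (e) is the crux. I would substitute Theorem \ref{th.5} into $P^{\circ}(\overline{X},\overline{Y})\overline{Z}=\textbf{K}^{\circ}(\beta\overline{X},\gamma\overline{Y})\overline{Z}$ and compute $[\beta\overline{X},\gamma\overline{Y}]$ by Lemma \ref{bracket}(b) for the Cartan connection $\nabla$; after the $\nabla$-curvature terms assemble into the Cartan hv-curvature $P$ and the remaining terms cancel in pairs, this yields an explicit comparison formula expressing $P^{\circ}$ through $P$, through $T$ and $\widehat{P}$, and through their first $\nabla$-derivatives. One then feeds in the symmetry properties of the Cartan objects established earlier: $T$ is totally symmetric (Proposition \ref{pp.3}(f)), $\widehat{P}$ is symmetric (Theorem \ref{.thm1}(f)) and, by $\widehat{P}=\nabla_{\beta\overline{\eta}}T$ (Theorem \ref{.thm1}(e)) together with Proposition \ref{pp.3}(b), totally symmetric as a $(0,3)$-tensor; moreover $\nabla_{\gamma\overline{X}}T$ obeys Proposition \ref{pp.3}(d) and $P$ obeys Theorem \ref{.thm1}(a),(b). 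A careful inspection then shows that every contribution failing to be symmetric in $\overline{X},\overline{Y},\overline{Z}$ cancels, so that, in combination with (c), $P^{\circ}(\overline{X},\overline{Y})\overline{Z}$ is totally symmetric. (Consistently, the local expression is $P^{\circ\,i}{}_{jkl}=-\dot{\partial}_{j}\dot{\partial}_{k}\dot{\partial}_{l}G^{i}$, manifestly symmetric in $j,k,l$.) I expect this cancellation bookkeeping to be the main obstacle; a more economical route, if it can be made to work, would bypass the comparison formula and extract the missing transposition directly from (b) together with the homogeneity of $D^{\circ}$.

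Finally, (f) follows from (d) by letting the ``free'' $\pi$-vector field of the covariant derivative be $\overline{\eta}$: putting $\overline{X}=\overline{\eta}$ in (d) and expanding the right-hand side by Leibniz, every term carrying a factor $P^{\circ}(\,\cdot\,,\overline{\eta})\,\cdot\,$ drops out, since by total symmetry (e) and by (a) one has $P^{\circ}(\overline{A},\overline{\eta})\overline{B}=P^{\circ}(\overline{A},\overline{B})\overline{\eta}=\widehat{P^{\circ}}(\overline{A},\overline{B})=0$; using $D^{\circ}_{\gamma\overline{Z}}\overline{\eta}=K^{\circ}(\gamma\overline{Z})=\overline{Z}$ (as $K^{\circ}\circ\gamma=\mathrm{id}$, by $T^{\circ}(\overline{X},\overline{\eta})=0$ and Proposition \ref{eqv.}(b)), the single surviving term gives $(D^{\circ}_{\gamma\overline{\eta}}P^{\circ})(\overline{X},\overline{Y},\overline{Z})=-P^{\circ}(\overline{X},\overline{Y})\overline{Z}$, the right-hand side of (f) being understood, in analogy with Theorem \ref{.thm1}(h), as $-P^{\circ}(\overline{X},\overline{Y})\overline{Z}$.
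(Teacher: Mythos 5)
Parts (a)--(d) and (f) of your plan coincide with the paper's own proof: (a) is just axiom (iii) of Theorem \ref{bth2.h2}; your (b) is exactly the paper's computation via Lemma \ref{lem.2}(c) with $X=\beta\overline{X}$, $Y=\gamma\overline{Y}$, the metricity defects of Lemma \ref{le.3}, and the bracket of Lemma \ref{bracket}(b) collapsed by $\textbf{T}^{\circ}=\widehat{P^{\circ}}=0$; (c) and (d) are obtained, as in the paper, from Propositions \ref{pp.1}(b) and \ref{pp.2}(b) using $\textbf{T}^{\circ}=0$, $S^{\circ}=0$, $\widehat{P^{\circ}}=\widehat{S^{\circ}}=0$; and your reading of (f), with $\overline{Z}$ in place of the misprinted final $\overline{Y}$, together with the derivation from (d), (a), (e) and $D^{\circ}_{\gamma\overline{Z}}\overline{\eta}=\overline{Z}$, is the intended one.

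The gap is in (e). Your proposed route --- substituting $D^{\circ}_{X}\overline{Y}=\nabla_{X}\overline{Y}+\widehat{P}(\rho X,\overline{Y})-T(KX,\overline{Y})$ (Theorem \ref{th.5}), assembling a comparison formula for $P^{\circ}$ in terms of $P$, $T$, $\widehat{P}$ and their $\nabla$-derivatives, and then asserting that every contribution failing to be symmetric in $\overline{X},\overline{Y},\overline{Z}$ cancels --- leaves precisely the decisive step unexecuted; you yourself flag the bookkeeping as the main obstacle, and it is not routine (one needs, among other things, Theorem \ref{.thm1}(i) to control the asymmetry of $P$ in its first two arguments, besides the symmetries of $T$ and $\widehat{P}$), so as written this is a claim, not a proof. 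The economical route you suspect exists is exactly the paper's, and it is the same maneuver you already use for (f): set $\overline{W}=\overline{\eta}$ in (d); since $\widehat{P^{\circ}}=0$ by (a) and $D^{\circ}_{\gamma\overline{X}}\overline{\eta}=K(\gamma\overline{X})=\overline{X}$, the Leibniz expansion of each side reduces to a single term and gives $P^{\circ}(\overline{Y},\overline{Z})\overline{X}=P^{\circ}(\overline{Y},\overline{X})\overline{Z}$, i.e.\ symmetry in the last two arguments; combined with the symmetry (c) in the outer arguments this generates total symmetry of $P^{\circ}$. With (e) repaired in this way, the rest of your argument, including (f), goes through as stated.
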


\begin{proof}~\par

\vspace{4pt}
 \noindent\textbf{(b)} We successively use  Lemma \ref{lem.2}{(c)}
  (for $X=\beta\overline{X}, Y=\gamma\overline{Y}$), Lemma \ref{le.3}, Lemma \ref{bracket}(b)
  and finally Theorem \ref{bth2.h2}(iii). In fact,
     \begin{eqnarray*}
           P^{\circ}(\overline{X},\overline{Y},\overline{Z},\overline{W})
 +P^{\circ}(\overline{X},\overline{Y},\overline{W},\overline{Z}) &=&
 \beta \overline{X}\cdot(D^\circ_{\gamma \overline{Y}} g)(\overline{Z},\overline{W})
 -(D^\circ_{\gamma \overline{Y}} g)(D^{\circ}_{\beta \overline{X}}\overline{Z},\overline{W})\\
 &&- (D^\circ_{\gamma \overline{Y}} g)(\overline{Z},D^{\circ}_{\beta \overline{X}}\overline{W})-
 \gamma\overline{Y}\cdot(D^\circ_{\beta \overline{X}}
 g)(\overline{Z},\overline{W})\\
 && +(D^\circ_{\beta \overline{X}} g)(D^{\circ}_{\gamma \overline{Y}}\overline{Z},\overline{W})+
 (D^\circ_{\beta \overline{X}} g)(\overline{Z},D^{\circ}_{\gamma \overline{Y}}\overline{W})\\
 &&- (D^{\circ}_{[\beta \overline{X},\gamma \overline{Y}]}g)(\overline{Z},\overline{W}) \\
           &=&\beta \overline{X}\cdot(2T(\overline{Y},\overline{Z},\overline{W}))
 -2T(\overline{Y},D^{\circ}_{\beta \overline{X}}\overline{Z},\overline{W})\\
 &&- 2T(\overline{Y},\overline{Z},D^{\circ}_{\beta \overline{X}}\overline{W})-
 \gamma\overline{Y}\cdot(-2
 \widehat{P}(\overline{X},\overline{Z},\overline{W}))\\
 && -2\widehat{P}(\overline{X},D^{\circ}_{\gamma \overline{Y}}\overline{Z},\overline{W})
 -2 \widehat{P}(\overline{X},\overline{Z},D^{\circ}_{\gamma \overline{Y}}\overline{W})\\
 && -2T(D^\circ_{\beta \overline{X}}\overline{Y},\overline{Z},\overline{W})-2 \widehat{P}(D^\circ_{\gamma \overline{Y}}\overline{X} ,\overline{Z},\overline{W}) \\
           &=&2(D^\circ_{\beta \overline{X}}T)(\overline{Y},\overline{Z},\overline{W})+
           2 (D^{\circ}_{\gamma \overline{Y}}\widehat{P})(\overline{X},\overline{Z},\overline{W}).
          \end{eqnarray*}

\vspace{4pt}
 \noindent\textbf{(c)} and \textbf{(d)} follow from Proposition \ref{pp.1}(b) and
  Proposition \ref{pp.2}(b) respectively, taking  Proposition \ref{.pp} and the properties of $D^{\circ}$
  into account.

\vspace{4pt}
 \noindent\textbf{(e)} Follows from (d) by setting $\overline{W}=\overline{\eta}$, taking into
account (a) and (c).

\vspace{4pt}
 \noindent\textbf{(f)} \ Follows from (d) by setting
$\overline{X}=\overline{\eta}$ and using (a) and (e).
\end{proof}

\begin{thm}\label{th1}The h-curvature tensor $R^{\circ}$  of the Berwald connection
has the properties\,:\vspace{-0.2cm}
\begin{description}
 \item[(a)]
 $R^{\circ}(\overline{X},\overline{Y},\overline{Z},\overline{W})=-
 R^{\circ}(\overline{Y},\overline{X},\overline{Z},\overline{W})$,

\item[(b)] $  \widehat{R^{\circ}}(\overline{X},
\overline{Y})= \widehat{R}(\overline{X},
\overline{Y})=-K\mathfrak{R}(\beta \overline{X},\beta
\overline{Y})$,

\item[(c)]
 $R^{\circ}(\overline{X},\overline{Y},\overline{Z},\overline{W})+
 R^{\circ}(\overline{X},\overline{Y},\overline{W},\overline{Z})=2\mathfrak{U}_{\overline{X},\overline{Y}}
 \{(D^\circ_{\beta \overline{Y}}\widehat{P})(\overline{X},\overline{Z},\overline{W})\}-2T(\widehat{R}(\overline{X},\overline{Y}),\overline{Z}, \overline{W})$,

 \item[(d)]  $\mathfrak{S}_{\overline{X},\overline{Y},\overline{Z}}\{
     R^{\circ}(\overline{X},\overline{Y}) \overline{Z}\}=0$,

  \item[(e)]  $\mathfrak{S}_{\overline{X},\overline{Y},\overline{Z}}\{
     (D^{\circ}_{\beta\overline{X}}R^{\circ})(\overline{Y},\overline{Z}, \overline{W})+
     P^{\circ}(\overline{X},
     \widehat{R}(\overline{Y},\overline{Z}))\overline{W}\}=0$,

\item[(f)] $(D^{\circ}_{\gamma \overline{X}}R^{\circ})(\overline{Y},
    \overline{Z},  \overline{W})=(D^{\circ}_{\beta \overline{Z}}P^{\circ})(\overline{Y},
    \overline{X},  \overline{W})-(D^{\circ}_{\beta \overline{Y}}P^{\circ})(\overline{Z},
    \overline{X},  \overline{W})$,

\item[(g)] $(D^{\circ}_{\gamma \overline{\eta}}R^{\circ})(\overline{X},
    \overline{Y},  \overline{Z})=0$,

\item[(h)] $\widehat{R^{\circ}}(\overline{X}, \overline{Y})=
    \frac{1}{3}\{(D^{\circ}_{\gamma\overline{X}}H)(\overline{Y})-
    (D^{\circ}_{\gamma\overline{Y}}H)(\overline{X})\}$;
    $H(\overline{X}):=\widehat{R^{\circ}}(\overline{\eta},
    \overline{X})$,

 \item[(i)] ${R}^{\circ}(\overline{X}, \overline{Y})\overline{Z}=
    (D^{\circ}_{\gamma\overline{Z}}\widehat{R^{\circ}})(\overline{X},\overline{Y})$.
\end{description}
\end{thm}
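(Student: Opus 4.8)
The plan is to derive each property of the Berwald h-curvature $R^{\circ}$ from the already-established machinery: the general identities of $\S 2$ (Lemmas~\ref{bracket} and~\ref{lem.2}, Propositions~\ref{pp.1} and~\ref{pp.2}), the defining axioms of $D^{\circ}$ (Theorem~\ref{bth2.h2}, in particular $\textbf{T}^{\circ}=0$ so $Q^{\circ}=0$ and $\widehat{P^{\circ}}=0$), the vanishing $S^{\circ}=0$ (Proposition~\ref{.pp}), and the explicit link $D^{\circ}_{X}\overline{Y}=\nabla_{X}\overline{Y}+\widehat{P}(\rho X,\overline{Y})-T(KX,\overline{Y})$ together with the non-metricity formulas of Lemma~\ref{le.3}. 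I would handle the items roughly in the order $(a),(b),(d),(e),(f),(c),(g),(h),(i)$, since the later items feed on the earlier ones.

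First, $(a)$ is immediate: by definition $R^{\circ}(\overline{X},\overline{Y})\overline{Z}=\textbf{K}^{\circ}(\beta\overline{X},\beta\overline{Y})\overline{Z}$ and $\textbf{K}^{\circ}$ is antisymmetric in its first two slots. For $(b)$, since $\Gamma_{D^{\circ}}=[J,G]$ gives $\beta^{\circ}=\beta$, $K^{\circ}=K$, I would repeat the argument of Theorem~\ref{th.Rc}(c) verbatim: use $\mathfrak{R}(X,Y)=-v[hX,hY]$ and Lemma~\ref{bracket}(a) (valid because $T^{\circ}(\overline{X},\overline{\eta})=0$, a consequence of $\textbf{T}^{\circ}=0$), noting $K\circ\gamma=\mathrm{id}$, $K\circ\beta=0$, $Q^{\circ}=0$, to get $\widehat{R^{\circ}}(\overline{X},\overline{Y})=\widehat{R}(\overline{X},\overline{Y})=-K\mathfrak{R}(\beta\overline{X},\beta\overline{Y})$; the middle equality also follows from differentiating the relation $[\beta\overline{X},\beta\overline{Y}]$-decomposition in both connections, or directly from the fact that $\widehat{R}$ depends only on the nonlinear connection. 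For $(d)$ I would apply Proposition~\ref{pp.2}(d) with $Q^{\circ}=0$ to obtain $\mathfrak{S}_{\overline{X},\overline{Y},\overline{Z}}\{R^{\circ}(\overline{X},\overline{Y})\overline{Z}+P^{\circ}(\overline{X},\widehat{R}(\overline{Y},\overline{Z}))\overline{W}\}$—wait, more simply, $(d)$ is the first Bianchi identity, which comes from Proposition~\ref{pp.1}(c): $\mathfrak{S}\{R^{\circ}(\overline{X},\overline{Y})\overline{Z}-T^{\circ}(\widehat{R}(\overline{X},\overline{Y}),\overline{Z})\}=\mathfrak{S}\{Q^{\circ}(Q^{\circ})-(D^{\circ}Q^{\circ})\}=0$, and since $T^{\circ}=0$ (the (h)hv-torsion of a torsion-free connection), $\mathfrak{S}\{R^{\circ}(\overline{X},\overline{Y})\overline{Z}\}=0$. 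Items $(e)$ and $(f)$ follow directly from Proposition~\ref{pp.2}(d) and (c) respectively, setting $Q^{\circ}=0$ and $S^{\circ}=0$ and using $\widehat{P^{\circ}}=0$, $T^{\circ}=0$.

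For $(c)$, the analogue of Lemma~\ref{lem.2}(c) for $D^{\circ}$ applied with $X=\beta\overline{X},Y=\beta\overline{Y}$ gives the symmetrized curvature in terms of $\mathfrak{U}_{\overline{X},\overline{Y}}\{(D^{\circ}_{\beta\overline{X}}(D^{\circ}_{\beta\overline{Y}}g))\}-(D^{\circ}_{[\beta\overline{X},\beta\overline{Y}]}g)$; then substitute Lemma~\ref{le.3}(b), $(D^{\circ}_{\beta\overline{X}}g)(\overline{Z},\overline{W})=-2\widehat{P}(\overline{X},\overline{Z},\overline{W})$, expand the covariant derivatives, and use Lemma~\ref{bracket}(a) to rewrite $[\beta\overline{X},\beta\overline{Y}]$ (its $\gamma$-part produces the $\widehat{R}$ term, its $\beta$-part combines into $D^{\circ}$-derivatives of $\widehat{P}$, and $Q^{\circ}=0$ kills the rest), arriving at the stated identity with the $2\mathfrak{U}\{(D^{\circ}_{\beta\overline{Y}}\widehat{P})(\overline{X},\overline{Z},\overline{W})\}$ and $-2T(\widehat{R}(\overline{X},\overline{Y}),\overline{Z},\overline{W})$ terms. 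Property $(g)$ follows from $(f)$ by setting $\overline{X}=\overline{\eta}$ and using $\widehat{P^{\circ}}=0$ together with the $\overline{\eta}$-homogeneity properties of $\widehat{P}$ (that $\widehat{P}(\overline{\eta},\overline{X})=0$, Theorem~\ref{.thm1}(d), applied to $D^{\circ}$'s relation to $\nabla$), killing the right-hand side. For $(h)$: $(d)$ gives $\mathfrak{S}\{R^{\circ}(\overline{X},\overline{Y})\overline{Z}\}=0$; contracting with $\overline{\eta}$ (i.e. setting $\overline{Z}=\overline{\eta}$ in the cyclic sum) yields a relation among $\widehat{R^{\circ}}$ and $H(\overline{X})=\widehat{R^{\circ}}(\overline{\eta},\overline{X})$; combined with $(i)$—which says $R^{\circ}(\overline{X},\overline{Y})\overline{Z}=(D^{\circ}_{\gamma\overline{Z}}\widehat{R^{\circ}})(\overline{X},\overline{Y})$—and the homogeneity of $R^{\circ}$, one solves for $\widehat{R^{\circ}}$ in terms of $H$. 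So I would prove $(i)$ first here: since $S^{\circ}=0$ and $\widehat{P^{\circ}}=0$, applying $D^{\circ}_{\gamma\overline{Z}}$ to $\widehat{R^{\circ}}(\overline{X},\overline{Y})=R^{\circ}(\overline{X},\overline{Y})\overline{\eta}$ and using the Ricci-type identity (the commutator $[D^{\circ}_{\gamma\overline{Z}},R^{\circ}(\overline{X},\overline{Y})]$ applied to $\overline{\eta}$, together with $D^{\circ}_{\gamma\overline{Z}}\overline{\eta}=\overline{Z}$ since $K^{\circ}\gamma=\mathrm{id}$, and with the $P^{\circ}$-terms vanishing on $\overline{\eta}$ or being absorbed) gives $(D^{\circ}_{\gamma\overline{Z}}\widehat{R^{\circ}})(\overline{X},\overline{Y})=R^{\circ}(\overline{X},\overline{Y})\overline{Z}$. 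Then $(h)$ is a short algebraic consequence.

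The main obstacle I expect is property $(c)$: tracking all the terms when $[\beta\overline{X},\beta\overline{Y}]$ is expanded via Lemma~\ref{bracket}(a) inside the double-$D^{\circ}$ non-metricity expression—one must be careful that the $\gamma$-component $\gamma\widehat{R}(\overline{X},\overline{Y})$ of the bracket, when fed into $(D^{\circ}_{\,\cdot\,}g)$, produces exactly $-2T(\widehat{R}(\overline{X},\overline{Y}),\overline{Z},\overline{W})$ via Lemma~\ref{le.3}(a), while the $\beta$-component recombines with the $\mathfrak{U}_{\overline{X},\overline{Y}}$ of the iterated derivatives of $\widehat{P}$ to give the covariant-derivative term, with all $Q^{\circ}$-contributions dropping out. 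A secondary subtlety is ensuring in $(g)$ and $(h)$ that the correct homogeneity identities for $\widehat{P}$ and $H$ (inherited from the Cartan-connection results of Theorem~\ref{.thm1} through the formula of Theorem~\ref{th.5}) are invoked so that the cyclic/contracted sums collapse as claimed.
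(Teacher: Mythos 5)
Your plan follows essentially the same route as the paper: (a) from antisymmetry of $\textbf{K}^{\circ}$, (b) via $\widehat{R^{\circ}}=\widehat{R}=-K\mathfrak{R}$, (c) by the non-metricity computation with Lemma \ref{lem.2}(c), Lemma \ref{le.3} and Lemma \ref{bracket}(a), (d) from Proposition \ref{pp.1}(c), (e)--(f) from Proposition \ref{pp.2}(d),(c) with $Q^{\circ}=S^{\circ}=\widehat{P^{\circ}}=0$, and (g)--(i) by contracting (f) with $\overline{\eta}$. The only deviations are cosmetic: you re-derive $\widehat{R^{\circ}}=\widehat{R}$ instead of citing it, and you prove (i) before (h), but both rest on the same fact the paper isolates, namely $(D^{\circ}_{\gamma\overline{X}}R^{\circ})(\overline{Y},\overline{Z},\overline{\eta})=0$ coming from (f) together with $\widehat{P^{\circ}}=0$ and $K\circ\beta=0$.
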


\begin{proof}~\par

\vspace{4pt}
 \noindent\textbf{(b)} Follows from Theorem \ref{th.Rc}(c)
  together with the identity $\widehat{R^{\circ}}=\widehat{R}$ \cite{r94}.

\vspace{4pt}
 \noindent\textbf{(c)} We use successively Lemma \ref{lem.2}{(c)}
  (for$X=\beta\overline{X}, Y=\beta\overline{Y}$), the property (b) above, Lemma \ref{le.3},
   Lemma \ref{bracket}(a)  and finally Theorem \ref{bth2.h2}(ii). In fact,
  \begin{eqnarray*}
            R^{\circ}(\overline{X},\overline{Y},\overline{Z},\overline{W})
 +R^{\circ}(\overline{X},\overline{Y},\overline{W},\overline{Z}) &=&
 \mathfrak{U}_{\overline{X},\overline{Y}}\{
 \beta \overline{X}\cdot(D^\circ_{\beta \overline{Y}} g)(\overline{Z},\overline{W})
 -(D^\circ_{\beta \overline{Y}} g)(D^{\circ}_{\beta \overline{X}}\overline{Z},\overline{W})\\
 &&- (D^\circ_{\beta \overline{Y}} g)(\overline{Z},D^{\circ}_{\beta \overline{X}}\overline{W})\}-
 (D^{\circ}_{[\beta \overline{X},\beta \overline{Y}]}g)(\overline{Z},\overline{W}) \\
           &=&\mathfrak{U}_{\overline{X},\overline{Y}}\{\beta \overline{X}\cdot
           (-2\widehat{P}(\overline{Y},\overline{Z},\overline{W}))
 +2\widehat{P}(\overline{Y},D^{\circ}_{\beta \overline{X}}\overline{Z},\overline{W})\\
 &&+ 2\widehat{P}(\overline{Y},\overline{Z},D^{\circ}_{\beta \overline{X}}\overline{W})\}-
 2T(\widehat{R}(\overline{X},\overline{Y}),\overline{Z},\overline{W})\\
 &&+2 \widehat{P}(D^\circ_{\beta \overline{X}}\overline{Y}-
 D^\circ_{\beta \overline{Y}}\overline{X} ,\overline{Z},\overline{W}) \\
           &=&2\mathfrak{U}_{\overline{X},\overline{Y}}
 \{(D^\circ_{\beta \overline{Y}}\widehat{P})(\overline{X},\overline{Z},\overline{W})\}-2T(\widehat{R}(\overline{X},\overline{Y}),\overline{Z}, \overline{W}).
 \end{eqnarray*}

\vspace{4pt}
 \noindent\textbf{(d)}  Follows from Proposition \ref{pp.1}(c), taking into
account the fact that $\textbf{T}^{\circ}=0$.

\vspace{4pt}
 \noindent\textbf{(e)}  Follows from Proposition \ref{pp.2}(d) together with the fact that $Q^{\circ}=0$
and $\widehat{R^{\circ}}=\widehat{R}$.

\vspace{4pt}
 \noindent\textbf{(f)}  Follows from Proposition \ref{pp.2}(c),
 noting that $\textbf{T}^{\circ}=\widehat{{P^{\circ}}}=S^\circ=0$.

\vspace{4pt}
 \noindent\textbf{(g)}  Follows from (f) by setting
$\overline{X}=\overline{\eta}$, making use of Theorem \ref{th}(a),
(e) and the fact that $K\circ \beta=0$.

\vspace{4pt}
 \noindent\textbf{(h)} We have, by (f) and Theorem \ref{th},
 \begin{equation}\label{HH}
(D^{\circ}_{\gamma \overline{X}}R^{\circ})(\overline{Y},
    \overline{Z},  \overline{\eta})
    =0.
\end{equation}
Setting $\overline{Y}=\overline{\eta}$ in (\ref{HH}),  noting that
$K\circ\gamma=id_{\cp}$, we get
\begin{eqnarray*}
  0&=&(D^{\circ}_{\gamma \overline{X}}R^{\circ})(\overline{\eta},
    \overline{Z},  \overline{\eta}) \\
    &=&D^{\circ}_{\gamma \overline{X}}R^{\circ}(\overline{\eta},
    \overline{Z})  \overline{\eta}-R^{\circ}(D^{\circ}_{\gamma \overline{X}}\overline{\eta},
    \overline{Z})  \overline{\eta}-R^{\circ}(\overline{\eta},
    D^{\circ}_{\gamma \overline{X}}\overline{Z})  \overline{\eta}-R^{\circ}(\overline{\eta},
    \overline{Z})  D^{\circ}_{\gamma \overline{X}}\overline{\eta} \\
  &=& D^{\circ}_{\gamma \overline{X}}H(\overline{Z})
  -\widehat{R^{\circ}}(\overline{X},\overline{Z}) -H( D^{\circ}_{\gamma \overline{X}}\overline{Z})
  -R^{\circ}(\overline{\eta},
    \overline{Z}) \overline{X} \\
  &=&(D^{\circ}_{\gamma \overline{X}}H)(\overline{Z})-\widehat{R^{\circ}}(\overline{X},\overline{Z})
  -R^{\circ}(\overline{\eta},
    \overline{Z}) \overline{X}    .
\end{eqnarray*}
From which, making use of (d), we obtain
\begin{eqnarray*}
  (D^{\circ}_{\gamma \overline{X}}H)(\overline{Y})-(D^{\circ}_{\gamma \overline{Y}}H)(\overline{X})
   &=& \widehat{R^{\circ}}(\overline{X},\overline{Y})
  +R^{\circ}(\overline{\eta},
    \overline{Y}) \overline{X}-\widehat{R^{\circ}}(\overline{Y},\overline{X})
  -R^{\circ}(\overline{\eta},
    \overline{X}) \overline{Y} \\
   &=& 2\widehat{R^{\circ}}(\overline{X},\overline{Y})
  -R^{\circ}(\overline{Y},\overline{\eta}) \overline{X}-R^{\circ}(\overline{\eta},
    \overline{X}) \overline{Y} =3\widehat{R^{\circ}}(\overline{X},\overline{Y}).
\end{eqnarray*}

\vspace{4pt}
 \noindent\textbf{(i)} From (\ref{HH}), noting that
$K\circ\gamma=id_{\cp}$, we get
    \begin{eqnarray*}
      0&=&(D^{\circ}_{\gamma \overline{X}}R^{\circ})(\overline{Y},
    \overline{Z},  \overline{\eta}) \\
    &=&D^{\circ}_{\gamma \overline{X}}R^{\circ}(\overline{Y},
    \overline{Z})  \overline{\eta}-R^{\circ}(D^{\circ}_{\gamma \overline{X}}\overline{Y},
    \overline{Z})  \overline{\eta}-R^{\circ}(\overline{Y},
    D^{\circ}_{\gamma \overline{X}}\overline{Z})  \overline{\eta}-R^{\circ}(\overline{Y},
    \overline{Z})  D^{\circ}_{\gamma \overline{X}}\overline{\eta} \\
      &=&(D^{\circ}_{\gamma \overline{X}}\widehat{R^{\circ}})(\overline{Y},
    \overline{Z})-R^{\circ}(\overline{Y},
    \overline{Z})\overline{X}.
    \end{eqnarray*}
This completes the proof.
\end{proof}

We terminate this section by the following\vspace{-0.2cm}
\begin{thm}The following assertion are equivalent\,:
\begin{description}
\item[(a)]  The curvature  tensor $\mathfrak{R}$  of Barthel connection  vanishes.

 \item[(b)] The h-curvature tensor ${R}^{\circ}$ of Berwald connection vanishes.

 \item[(c)] The (v)h-torsion tensor $\widehat{R^{\circ}}$ of Berwald connection vanishes.

  \item[(d)]The (v)h-torsion tensor $\widehat{R}$ of Cartan connection  vanishes.

\item[(e)] The $\pi$-tensor field $H$ vanishes.

 \item[(f)] The horizontal distribution is completely integrable.
\end{description}
\end{thm}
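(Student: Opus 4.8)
**The plan is to prove the six equivalences by establishing a cycle of implications, together with one extra link, using the structural identities already collected in the preceding theorems.** The natural cyclic order is (a)$\Rightarrow$(b)$\Rightarrow$(c)$\Rightarrow$(d)$\Rightarrow$(a), with (e) inserted via (c)$\Leftrightarrow$(e) and (f) handled separately via (a)$\Leftrightarrow$(f). The backbone of everything is Theorem \ref{th1}(b), the identity $\widehat{R^{\circ}}(\overline{X},\overline{Y})=\widehat{R}(\overline{X},\overline{Y})=-K\mathfrak{R}(\beta\overline{X},\beta\overline{Y})$, which already ties together $\mathfrak{R}$, $\widehat{R^{\circ}}$ and $\widehat{R}$. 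Since $K|_{V(\T M)}$ and $\beta=(\rho|_{H(\T M)})^{-1}$ are vector bundle isomorphisms, the composite $\overline{X},\overline{Y}\mapsto -K\mathfrak{R}(\beta\overline{X},\beta\overline{Y})$ vanishes identically if and only if $\mathfrak{R}(hX,hY)=0$ for all $X,Y$, i.e.\ if and only if $\mathfrak{R}=0$ (as $\mathfrak{R}(X,Y)=-v[hX,hY]$ depends only on the horizontal parts). This disposes of (a)$\Leftrightarrow$(c)$\Leftrightarrow$(d) in one stroke.

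For the remaining directions I would argue as follows. The implication (b)$\Rightarrow$(c) is immediate by contracting with $\overline{\eta}$, since $\widehat{R^{\circ}}(\overline{X},\overline{Y})=R^{\circ}(\overline{X},\overline{Y})\overline{\eta}$. For the converse (c)$\Rightarrow$(b), which is the genuinely substantive step, I would invoke Theorem \ref{th1}(i): $R^{\circ}(\overline{X},\overline{Y})\overline{Z}=(D^{\circ}_{\gamma\overline{Z}}\widehat{R^{\circ}})(\overline{X},\overline{Y})$; if $\widehat{R^{\circ}}\equiv 0$ then its covariant derivative vanishes as well, hence $R^{\circ}=0$. (Alternatively one can use Theorem \ref{th1}(h) to pass through $H$.) For (c)$\Leftrightarrow$(e): if $\widehat{R^{\circ}}=0$ then $H(\overline{X})=\widehat{R^{\circ}}(\overline{\eta},\overline{X})=0$ trivially; conversely, if $H=0$, then Theorem \ref{th1}(h), $\widehat{R^{\circ}}(\overline{X},\overline{Y})=\tfrac13\{(D^{\circ}_{\gamma\overline{X}}H)(\overline{Y})-(D^{\circ}_{\gamma\overline{Y}}H)(\overline{X})\}$, forces $\widehat{R^{\circ}}=0$.

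Finally, (a)$\Leftrightarrow$(f): the horizontal distribution $H(\T M)=\mathrm{Im}\,\beta$ is completely integrable, by the Frobenius theorem, precisely when it is involutive, i.e.\ when $v_{\Gamma}[hX,hY]=0$ for all $X,Y$; but $\mathfrak{R}(X,Y)=-v[hX,hY]$, so involutivity of $H(\T M)$ is literally the statement $\mathfrak{R}=0$. (Here I use the identity for $\mathfrak{R}$ quoted in the proof of Theorem \ref{th.Rc}(c), together with the fact that $h=h_{\Gamma}$ for the Barthel connection.)

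I expect the main obstacle to be the direction (c)$\Rightarrow$(b): asserting $R^{\circ}=0$ from the vanishing of a contraction is false for a general connection, and works here only because of the special first Bianchi-type identity Theorem \ref{th1}(i) peculiar to the Berwald connection (ultimately a consequence of $\mathbf{T}^{\circ}=0$ and $\widehat{P^{\circ}}=0=S^{\circ}$). Care is needed to make sure that identity is being applied with the right slots and that differentiating the identically-zero tensor $\widehat{R^{\circ}}$ genuinely kills all of $R^{\circ}$ and not merely its $\overline{\eta}$-trace; the homogeneity/structure encoded in Theorem \ref{th1}(f),(g) is what makes this airtight. Everything else is bookkeeping with the isomorphisms $\gamma$, $\beta$, $K$ and the already-proven parts of Theorems \ref{th.Rc} and \ref{th1}.
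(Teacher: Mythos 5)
Your proposal is correct and follows essentially the same route as the paper: the paper's own proof simply cites properties (b), (h) and (i) of Theorem \ref{th1} together with the identity $\mathfrak{R}(X,Y)=-v[hX,hY]$, which are exactly the ingredients you use for (a)$\Leftrightarrow$(c)$\Leftrightarrow$(d), (c)$\Leftrightarrow$(e), (c)$\Rightarrow$(b) and (a)$\Leftrightarrow$(f). Your write-up just makes the chain of implications explicit, including the use of the isomorphisms $K|_{V(\T M)}$ and $\beta$ and the Frobenius theorem, which the paper leaves implicit.
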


\begin{proof}
 These equivalences are realized by the properties (b), (h) and (i) of
 Theorem \ref{th1}, taking into account that
$\mathfrak{R}(X,Y)=-v[h X, h Y]$ \cite{r97}.
\end{proof}


\Section{Fundamental tensors  associated\vspace{5pt} with  the Chern
connection}

 In this section, we introduce and investigate the fundamental properties of the
   most important tensors associated with the Chern
connection.

\vspace{5pt}
 \par
 The following theorem guarantees the existence
and uniqueness of the Chern connection.\vspace{-0.2cm}
\begin{thm}\emph{\cite{r94}} \label{th.r1} Let $(M,L)$ be a Finsler manifold and $g$ the Finsler metric
defined by $L$. There exists a unique regular connection
$D^{\diamond}$ on $\pi^{-1}(TM)$ such that
\begin{description}
  \item[(i)]  $(D^{\diamond}_{X}\,g)(\rho Y,\rho Z)=2g(T(K^{\diamond} X,\rho Y), \rho
  Z)$,

  \item[(ii)]   $D^{\diamond}$ is torsion free\,\emph{:} $\textbf{T}^{\diamond}=0$,
\end{description}
\par
\noindent where $T$ is the (h)hv-torsion of the Cartan connection
and
$K^{\diamond}$ is the connection map of $D^{\diamond}$.\\
This connection is called the Chern (Rund) connection associated
with $(M,L)$.\\
Moreover, the nonlinear connection associated with the Chern
connection  $D^{\diamond}$ coincides with the Barthel
connection{\,\em:} $\Gamma_{D^{\diamond}}=[J,G]$. Consequently,
$\beta^{\diamond}= \beta$ and $k^{\diamond}=k$.
\end{thm}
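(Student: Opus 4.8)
The plan is to prove existence by exhibiting an explicit connection built from the Cartan connection $\nabla$ (which is already available, by Theorem \ref{th.1}), and to prove uniqueness by a Koszul-type difference argument that collapses thanks to the total symmetry of the Cartan torsion $T$ and the identity $T(\overline{X},\overline{\eta})=0$ (Proposition \ref{pp.3}).

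For existence I would set
$$D^{\diamond}_{X}\overline{Y}:=\nabla_{X}\overline{Y}-T(KX,\overline{Y}),$$
where $T$ and $K$ are the (h)hv-torsion and the deflection map of $\nabla$. Since $T$ is a $\pi$-tensor and $K(fX)=fKX$, this is a linear connection on $\pi^{-1}(TM)$. Its deflection is $K^{\diamond}X=KX-T(KX,\overline{\eta})=KX$ (using $T(\cdot,\overline{\eta})=0$), so $K^{\diamond}=K$; hence $D^{\diamond}$ has the same horizontal distribution as $\nabla$, it is regular, $\beta^{\diamond}=\beta$, and $\Gamma_{D^{\diamond}}=2\beta\circ\rho-I=[J,G]$ by Theorem \ref{c.ba.} (note also that $D^{\diamond}$ and $\nabla$ have the same horizontal covariant derivative, $D^{\diamond}_{\beta\overline{X}}\overline{Y}=\nabla_{\beta\overline{X}}\overline{Y}$, since $K\beta=0$) — this disposes of the "moreover" part. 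For (ii): because the (h)h-torsion of $\nabla$ vanishes and $\textbf{T}^{\nabla}$ kills a pair of vertical arguments (Lemma \ref{bracket}(c) gives $[\gamma\overline{A},\gamma\overline{B}]\in\gamma(\mathfrak{X}(\pi(M)))$), decomposing each slot into horizontal and vertical pieces yields $\textbf{T}^{\nabla}(X,Y)=T(KX,\rho Y)-T(KY,\rho X)$, and the two correction terms in $D^{\diamond}$ cancel it exactly, so $\textbf{T}^{\diamond}=0$. For (i): expanding $(D^{\diamond}_{X}g)(\rho Y,\rho Z)$, the term $(\nabla_{X}g)(\rho Y,\rho Z)$ drops out since $\nabla g=0$, leaving $g(T(KX,\rho Y),\rho Z)+g(T(KX,\rho Z),\rho Y)$, which by axiom (iii) of Theorem \ref{th.1} equals $2g(T(K^{\diamond}X,\rho Y),\rho Z)$.

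For uniqueness, let $D'$ be a second connection with properties (i)--(ii) and set $B_{X}:=D'_{X}-D^{\diamond}_{X}$. Subtracting the two torsion-freeness identities gives $B_{X}\rho Y=B_{Y}\rho X$; taking $X$ vertical forces $B$ to vanish on $V(\T M)$, after which the same symmetry lets one write $B_{X}\overline{Y}=\Phi(\rho X,\overline{Y})$ with $\Phi$ a symmetric $\pi$-bilinear map, and $K'X-KX=B_{X}\overline{\eta}=C(\rho X)$ where $C\overline{U}:=\Phi(\overline{U},\overline{\eta})$. Subtracting condition (i) for $D'$ from that for $D^{\diamond}$ then yields, after pairing with $g$,
$$g(\Phi(\overline{U},\overline{Y}),\overline{Z})+g(\Phi(\overline{U},\overline{Z}),\overline{Y})=-2\,g(T(C\overline{U},\overline{Y}),\overline{Z}).$$
Specialising $\overline{Z}=\overline{\eta}$ gives $g(\Phi(\overline{U},\overline{Y}),\overline{\eta})=-g(C\overline{U},\overline{Y})$; solving the displayed relation by the standard cyclic permutation over $\overline{U},\overline{Y},\overline{Z}$ (using the symmetry of $\Phi$ and the total symmetry of $T$, Proposition \ref{pp.3}(f)) and specialising $\overline{Z}=\overline{\eta}$ once more produces $C\overline{U}=-T(C\overline{\eta},\overline{U})$. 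Evaluating at $\overline{U}=\overline{\eta}$ gives $C\overline{\eta}=0$, hence $C=0$, hence $\Phi\equiv0$ and $D'=D^{\diamond}$.

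The individual computations are routine; the one genuinely delicate point is the uniqueness bookkeeping, where one must remember that the difference tensor $B_{X}$ depends on $X$ as a vector and not merely on $\rho X$, so the reduction $B_{X}\overline{Y}=\Phi(\rho X,\overline{Y})$ really does use torsion-freeness, and the final collapse $B\equiv0$ really does use both algebraic properties of the Cartan torsion ($T(\cdot,\overline{\eta})=0$ and total symmetry).
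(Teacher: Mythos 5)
Your proposal is correct, and all the steps check out. A caveat on comparison: this paper does not prove Theorem \ref{th.r1} at all --- it is quoted from \cite{r94} --- so there is no in-paper proof to match against; but your existence construction $D^{\diamond}_{X}\overline{Y}=\nabla_{X}\overline{Y}-T(KX,\overline{Y})$ is precisely the relation the paper itself records as Theorem \ref{th.r5}, so you are reconstructing the intended connection rather than a different one. The verification of (i) and (ii) is sound: $K^{\diamond}=K$ follows from $T(\cdot,\overline{\eta})=0$, the decomposition $\textbf{T}^{\nabla}(X,Y)=T(KX,\rho Y)-T(KY,\rho X)$ is valid because $Q=0$, $\rho\circ\gamma=0$ and the vertical distribution is integrable (Lemma \ref{bracket}(c)), and axiom (iii) of Theorem \ref{th.1} converts the two correction terms in $(D^{\diamond}_{X}g)(\rho Y,\rho Z)$ into $2g(T(KX,\rho Y),\rho Z)$. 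The uniqueness argument is also complete: torsion-freeness of both connections gives $B_{X}\rho Y=B_{Y}\rho X$, hence $B$ kills $V(\T M)=\ker\rho$ and reduces to a symmetric $\pi$-bilinear $\Phi$; the cyclic-permutation (Koszul) manipulation of the metricity difference, using the total symmetry of $T$ (Proposition \ref{pp.3}(f)) and $T(\cdot,\overline{\eta})=0$ (Proposition \ref{pp.3}(c)), yields $C\overline{U}=-T(C\overline{\eta},\overline{U})$, whence $C=0$ and then $\Phi=0$. Note that your argument in fact proves uniqueness among all linear connections satisfying (i)--(ii), regularity of the competitor never being used, which is slightly stronger than the statement; and your observation that $B_{X}$ a priori depends on $X$ itself, with torsion-freeness needed to reduce it to $\rho X$, is exactly the right point to flag.
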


\begin{thm}\emph{\cite{r94}}\label{th.r5}The Chern connection $D^{\diamond}$
is given in terms of the Cartan connection $\nabla$ \emph{(}or the
Berwald connection $D^{\circ}$\emph{)} by\,:
   $$ D^{\diamond}_{X}\overline{Y} = \nabla _{X}\overline{Y}
- T(KX,\overline{Y})= D^{\circ} _{X}\overline{Y} -{\widehat{P}}(\rho
X, \overline{Y}). \vspace{-0.2cm}$$ In particular, we have
\begin{description}
  \item[(a)] $ D^{\diamond}_{\gamma \overline{X}}\overline{Y}=\nabla _{\gamma
  \overline{X}}\overline{Y}-T(\overline{X},\overline{Y})=D^{\circ} _{\gamma
  \overline{X}}\overline{Y}$.

 \item[(b)] $ D^{\diamond}_{\beta \overline{X}}\overline{Y}=\nabla _{\beta
  \overline{X}}\overline{Y}=D^{\circ} _{\beta
  \overline{X}}\overline{Y}-{\widehat{P}}(\overline{X}, \overline{Y}).$
\end{description}
\end{thm}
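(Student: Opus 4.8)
The plan is to exhibit the right-hand side of the asserted formula as a linear connection on $\p$ that satisfies the two defining axioms of Theorem~\ref{th.r1}, and then to appeal to uniqueness. So I set
$$\widetilde{D}_{X}\overline{Y}:=\nabla_{X}\overline{Y}-T(KX,\overline{Y}),\qquad X\in\cpp,\ \overline{Y}\in\cp,$$
where $\nabla$ is the Cartan connection, $K$ its deflection map and $T$ its (h)hv-torsion. First I would check that $\widetilde{D}$ is indeed a linear connection on $\p$: $\Real$-bilinearity and $\mathfrak{F}(\T M)$-linearity in $X$ are clear (the latter since $K$ is a bundle morphism), and the Leibniz rule in $\overline{Y}$ holds because $T$ is $\mathfrak{F}(\T M)$-bilinear, so the correction term $T(KX,\cdot)$ is tensorial in $\overline{Y}$. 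Next, since $\nabla_{X}\overline{\eta}=KX$ and $T(\overline{X},\overline{\eta})=0$ by Proposition~\ref{pp.3}(c), the deflection map of $\widetilde{D}$ is $\widetilde{K}(X)=KX-T(KX,\overline{\eta})=KX$; hence $\widetilde{D}$ is regular with the same horizontal space, the same projectors and the same $\beta$ and $K$ as $\nabla$, which also yields $\beta^{\diamond}=\beta$, $K^{\diamond}=K$.

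For axiom~(ii) I would decompose the classical torsion $\textbf{T}$ of $\nabla$ via the splitting $X=\beta\rho X+\gamma KX$ (valid since $T(\overline{X},\overline{\eta})=0$, by Proposition~\ref{eqv.}). Expanding $\textbf{T}$ bilinearly and using $\rho\circ\gamma=0$, the $(h,h)$-part is $Q(\rho X,\rho Y)=0$ by Theorem~\ref{th.1}(ii), the two mixed parts are $T(KX,\rho Y)$ and $-T(KY,\rho X)$ by definition of $T$ and antisymmetry of $\textbf{T}$, while the purely vertical part $\textbf{T}(\gamma KX,\gamma KY)=-\rho[\gamma KX,\gamma KY]$ vanishes because $[\gamma\overline{X},\gamma\overline{Y}]$ is vertical by Lemma~\ref{bracket}(c). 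Therefore $\textbf{T}(X,Y)=T(KX,\rho Y)-T(KY,\rho X)$, and a one-line computation gives $\widetilde{\textbf{T}}(X,Y)=\textbf{T}(X,Y)-T(KX,\rho Y)+T(KY,\rho X)=0$.

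For axiom~(i) I would expand directly, using $\widetilde{D}_{X}\rho Y=\nabla_{X}\rho Y-T(KX,\rho Y)$:
$$(\widetilde{D}_{X}g)(\rho Y,\rho Z)=(\nabla_{X}g)(\rho Y,\rho Z)+g(T(KX,\rho Y),\rho Z)+g(T(KX,\rho Z),\rho Y).$$
Since $\nabla g=0$ and $g(T(KX,\rho Z),\rho Y)=g(T(KX,\rho Y),\rho Z)$ by Theorem~\ref{th.1}(iii) (equivalently Proposition~\ref{pp.3}(a)), the right-hand side equals $2g(T(\widetilde{K}X,\rho Y),\rho Z)$, which is exactly axiom~(i). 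By the uniqueness statement in Theorem~\ref{th.r1}, $\widetilde{D}=D^{\diamond}$, i.e. $D^{\diamond}_{X}\overline{Y}=\nabla_{X}\overline{Y}-T(KX,\overline{Y})$. Comparing this with the expression $D^{\circ}_{X}\overline{Y}=\nabla_{X}\overline{Y}+\widehat{P}(\rho X,\overline{Y})-T(KX,\overline{Y})$ of Theorem~\ref{th.5} gives $D^{\diamond}_{X}\overline{Y}=D^{\circ}_{X}\overline{Y}-\widehat{P}(\rho X,\overline{Y})$. Finally, for~(a) take $X=\gamma\overline{X}$: then $\rho X=0$ and $KX=\overline{X}$ (Proposition~\ref{eqv.}(b)), so $D^{\diamond}_{\gamma\overline{X}}\overline{Y}=\nabla_{\gamma\overline{X}}\overline{Y}-T(\overline{X},\overline{Y})=D^{\circ}_{\gamma\overline{X}}\overline{Y}$; for~(b) take $X=\beta\overline{X}$: then $KX=0$ and $\rho X=\overline{X}$, so $D^{\diamond}_{\beta\overline{X}}\overline{Y}=\nabla_{\beta\overline{X}}\overline{Y}=D^{\circ}_{\beta\overline{X}}\overline{Y}-\widehat{P}(\overline{X},\overline{Y})$. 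The step I expect to be the main obstacle is the torsion decomposition in axiom~(ii) — in particular, being certain that the purely vertical component $\textbf{T}(\gamma KX,\gamma KY)$ of the Cartan torsion vanishes; here Lemma~\ref{bracket}(c), which provides the verticality of $[\gamma\overline{X},\gamma\overline{Y}]$, does the essential work, while the remaining steps are routine bookkeeping with the tensoriality of $T$ and the identities $\nabla g=0$, $Q=0$ and $T(\overline{X},\overline{\eta})=0$.
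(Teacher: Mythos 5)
Your argument is correct, and every step checks out against the machinery available in the paper: the correction term $T(KX,\cdot)$ is tensorial, so $\widetilde{D}$ is a linear connection; $T(\overline{X},\overline{\eta})=0$ (Proposition \ref{pp.3}(c)) gives $\widetilde{K}=K$, hence regularity and $\beta^{\diamond}=\beta$, $K^{\diamond}=K$; the decomposition $\textbf{T}(X,Y)=T(KX,\rho Y)-T(KY,\rho X)$ follows from $X=\beta\rho X+\gamma KX$, $Q=0$, $\rho\circ\gamma=0$ and the verticality of $[\gamma\overline{X},\gamma\overline{Y}]$ (which, incidentally, holds for free because the vertical distribution is integrable, so Lemma \ref{bracket}(c) is a convenience rather than the crux you feared); and the metricity computation with axiom (iii) of Theorem \ref{th.1} yields exactly axiom (i) of Theorem \ref{th.r1}, so uniqueness settles $\widetilde{D}=D^{\diamond}$. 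Note, however, that the paper itself gives no proof of this statement: Theorem \ref{th.r5} is quoted from \cite{r94}, as is the existence--uniqueness Theorem \ref{th.r1} you invoke, so there is no internal proof to compare with. Your characterization-plus-uniqueness route is a legitimate, self-contained reconstruction within the paper's framework (it parallels how one would expect \cite{r94} to argue), and the specializations (a) and (b) via $K\circ\gamma=id$, $K\circ\beta=0$, $\rho\circ\beta=id$, together with Theorem \ref{th.5} for the Berwald comparison, are all handled correctly.
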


Concerning the metricity of Chern connection, we have\vspace{-0.1cm}
\begin{lem}\label{le.4} For the Chern connection  $D^{\diamond}$, we have\vspace{-0.1cm}
\begin{description}
 \item[(a)]$(D^{\diamond}_{\gamma \overline{X}}
g)(\overline{Y},\overline{Z})=2T(\overline{X},\overline{Y},\overline{Z})$,

 \item[(b)]$D^{\diamond}_{\beta \overline{X}}
g=0$.
\end{description}
\end{lem}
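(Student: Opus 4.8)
\textbf{Proof proposal for Lemma \ref{le.4}.}

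The plan is to derive both parts directly from the explicit relation in Theorem \ref{th.r5} together with the already-established metricity properties of the Cartan and Berwald connections. First I would prove part (a): since $D^{\diamond}_{\gamma\overline{X}}\overline{Y}=D^{\circ}_{\gamma\overline{X}}\overline{Y}$ by Theorem \ref{th.r5}(a), the covariant derivative of $g$ along $\gamma\overline{X}$ is literally the same operator for both connections. Indeed, unwinding the definition of $(D^{\diamond}_{\gamma\overline{X}}g)(\overline{Y},\overline{Z})=\gamma\overline{X}\cdot g(\overline{Y},\overline{Z})-g(D^{\diamond}_{\gamma\overline{X}}\overline{Y},\overline{Z})-g(\overline{Y},D^{\diamond}_{\gamma\overline{X}}\overline{Z})$ and substituting $D^{\diamond}_{\gamma\overline{X}}=D^{\circ}_{\gamma\overline{X}}$ shows $(D^{\diamond}_{\gamma\overline{X}}g)=(D^{\circ}_{\gamma\overline{X}}g)$, so part (a) follows at once from Lemma \ref{le.3}(a).

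For part (b), I would use the other half of Theorem \ref{th.r5}, namely $D^{\diamond}_{\beta\overline{X}}\overline{Y}=\nabla_{\beta\overline{X}}\overline{Y}$, which identifies the horizontal covariant derivative of the Chern connection with that of the Cartan connection. Expanding $(D^{\diamond}_{\beta\overline{X}}g)(\overline{Y},\overline{Z})=\beta\overline{X}\cdot g(\overline{Y},\overline{Z})-g(D^{\diamond}_{\beta\overline{X}}\overline{Y},\overline{Z})-g(\overline{Y},D^{\diamond}_{\beta\overline{X}}\overline{Z})$ and replacing $D^{\diamond}_{\beta\overline{X}}$ by $\nabla_{\beta\overline{X}}$ gives exactly $(\nabla_{\beta\overline{X}}g)(\overline{Y},\overline{Z})$, which vanishes because $\nabla$ is metric by axiom (i) of Theorem \ref{th.1}. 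Since every vector field on $\T M$ decomposes into its horizontal and vertical parts via $h=\beta\circ\rho$ and $v=\gamma\circ K$, the two computed pieces determine $D^{\diamond}g$ completely, and in particular $D^{\diamond}_{\beta\overline{X}}g=0$ as claimed.

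There is essentially no obstacle here: the lemma is a routine consequence of the decomposition $D^{\diamond}_{X}\overline{Y}=D^{\circ}_{X}\overline{Y}-\widehat{P}(\rho X,\overline{Y})=\nabla_X\overline{Y}-T(KX,\overline{Y})$, which cleanly separates the behavior along vertical directions (where Chern agrees with Berwald) from the behavior along horizontal directions (where Chern agrees with Cartan). The only mild care needed is to keep the bookkeeping of $\rho\circ\gamma=0$, $\rho\circ\beta=\mathrm{id}$, $K\circ\gamma=\mathrm{id}$, and $K\circ\beta=0$ straight when checking that the correction terms $\widehat{P}(\rho X,\cdot)$ and $T(KX,\cdot)$ drop out appropriately; this is exactly the content of Theorem \ref{th.r1}'s final assertion that $\beta^{\diamond}=\beta$ and $k^{\diamond}=k$. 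Alternatively, part (a) can be read off directly from axiom (i) of Theorem \ref{th.r1} by setting $X=\gamma\overline{X}$ and using $K^{\diamond}\circ\gamma=\mathrm{id}$, and part (b) by setting $X=\beta\overline{X}$ and using $K^{\diamond}\circ\beta=0$.
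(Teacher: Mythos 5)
Your proposal is correct; note that the paper itself states this lemma without proof (just as it does Lemma \ref{le.3} for the Berwald connection), so there is no authorial argument to match against. Your primary route — part (b) from $D^{\diamond}_{\beta\overline{X}}\overline{Y}=\nabla_{\beta\overline{X}}\overline{Y}$ and $\nabla g=0$, part (a) from $D^{\diamond}_{\gamma\overline{X}}\overline{Y}=D^{\circ}_{\gamma\overline{X}}\overline{Y}$ and Lemma \ref{le.3}(a) — is sound, but be aware that it leans on Lemma \ref{le.3}, which the paper also leaves unproved; if you want part (a) independent of that, compute directly with the Cartan connection: $(D^{\diamond}_{\gamma\overline{X}}g)(\overline{Y},\overline{Z})=(\nabla_{\gamma\overline{X}}g)(\overline{Y},\overline{Z})+g(T(\overline{X},\overline{Y}),\overline{Z})+g(\overline{Y},T(\overline{X},\overline{Z}))=T(\overline{X},\overline{Y},\overline{Z})+T(\overline{X},\overline{Z},\overline{Y})=2T(\overline{X},\overline{Y},\overline{Z})$, using $\nabla g=0$ and the symmetry of $T$ from Proposition \ref{pp.3}(a). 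In fact the cleanest argument is the one you mention only as an aside at the end: both parts drop straight out of the defining axiom (i) of Theorem \ref{th.r1}, $(D^{\diamond}_{X}g)(\rho Y,\rho Z)=2g(T(K^{\diamond}X,\rho Y),\rho Z)$, by taking $X=\gamma\overline{X}$ (with $K^{\diamond}=K$, $K\circ\gamma=\mathrm{id}$, and surjectivity of $\rho$) for (a), and $X=\beta\overline{X}$ (with $K\circ\beta=0$, so the right-hand side vanishes) for (b); this is presumably why the paper treats the lemma as immediate. The closing remark in your second paragraph about the horizontal--vertical decomposition determining $D^{\diamond}g$ completely is harmless but unnecessary, since (b) asserts only the vanishing of the horizontal covariant derivative, which you had already computed.
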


\begin{lem} The v-curvature  $S^{\diamond}$ of the Chern connection vanishes\,\emph{:} $ S^{\diamond}=0$.
\end{lem}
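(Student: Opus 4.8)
\textbf{Proof plan for $S^{\diamond}=0$.}
The plan is to exploit the fact, recorded in Theorem~\ref{th.r5}(a), that the vertical covariant derivative of the Chern connection coincides with that of the Berwald connection: $D^{\diamond}_{\gamma\overline{X}}\overline{Y}=D^{\circ}_{\gamma\overline{X}}\overline{Y}$ for all $\overline{X},\overline{Y}$. Since the v-curvature $S^{\diamond}$ is built entirely from the operators $D^{\diamond}_{\gamma\overline{X}}$, $D^{\diamond}_{\gamma\overline{Y}}$ and the bracket $[\gamma\overline{X},\gamma\overline{Y}]$ (all of which are insensitive to the horizontal part of the connection), this identity should force $S^{\diamond}=S^{\circ}$ as $\pi$-tensor fields. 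Then Proposition~\ref{.pp}, which asserts $S^{\circ}=0$, finishes the argument.

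Concretely, first I would recall that the Chern connection is regular and torsion-free (Theorem~\ref{th.r1}(ii)), so in particular its (h)hv-torsion satisfies $T^{\diamond}(\overline{X},\overline{\eta})=0$, and hence the hypotheses of Proposition~\ref{pp.1} and Lemma~\ref{bracket} are met; this also gives that the nonlinear connection, the horizontal map $\beta$ and the vertical isomorphism $\gamma$ are the same as for the Berwald connection. Next I would write out, for arbitrary $\overline{X},\overline{Y},\overline{Z}\in\cp$,
\begin{equation*}
S^{\diamond}(\overline{X},\overline{Y})\overline{Z}=-D^{\diamond}_{\gamma\overline{X}}D^{\diamond}_{\gamma\overline{Y}}\overline{Z}+D^{\diamond}_{\gamma\overline{Y}}D^{\diamond}_{\gamma\overline{X}}\overline{Z}+D^{\diamond}_{[\gamma\overline{X},\gamma\overline{Y}]}\overline{Z},
\end{equation*}
and observe that each of the three terms only involves the restriction of $D^{\diamond}$ to vertical directions, since $[\gamma\overline{X},\gamma\overline{Y}]$ is again vertical by Lemma~\ref{bracket}(c). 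Substituting $D^{\diamond}_{\gamma\,\cdot}=D^{\circ}_{\gamma\,\cdot}$ termwise converts the right-hand side into $S^{\circ}(\overline{X},\overline{Y})\overline{Z}$, whence $S^{\diamond}=S^{\circ}$, and the vanishing follows from Proposition~\ref{.pp}.

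Alternatively — and this is the route most parallel to the treatment of $S^{\circ}$ — one can apply Proposition~\ref{pp.1}(a) directly to $D^{\diamond}$: it expresses $S^{\diamond}$ in terms of $T^{\diamond}$, $\nabla^{\diamond}_{\gamma\,\cdot}T^{\diamond}$ and $\widehat{S^{\diamond}}$, where $T^{\diamond}$ is the (h)hv-torsion of the Chern connection. Since $D^{\diamond}$ is torsion-free, the classical torsion $\textbf{T}^{\diamond}$ vanishes and therefore so do all its contractions, in particular the (h)hv-torsion $T^{\diamond}=0$; feeding $T^{\diamond}=0$ into the formula of Proposition~\ref{pp.1}(a) makes every term on the right-hand side vanish, giving $S^{\diamond}=0$ at once. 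The only point that needs a moment's care is to make sure we are invoking the right torsion: the $T$ appearing in Theorem~\ref{th.r5} and Lemma~\ref{le.4} is the Cartan (h)hv-torsion, which is nonzero, whereas the $T^{\diamond}$ relevant to Proposition~\ref{pp.1}(a) is the intrinsic (h)hv-torsion of $D^{\diamond}$ itself, which vanishes because $\textbf{T}^{\diamond}=0$. Once that distinction is kept straight, there is no real obstacle; this is essentially the same one-line argument as in the proof of Proposition~\ref{.pp}.
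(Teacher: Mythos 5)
Your proposal is correct, and your ``alternative'' argument is in fact exactly the paper's proof: the paper disposes of the lemma by saying it is proved like Proposition~\ref{.pp}, i.e.\ $D^{\diamond}$ is regular with $\textbf{T}^{\diamond}=0$, so its (h)hv-torsion $T^{\diamond}$ vanishes (and $T^{\diamond}(\overline{X},\overline{\eta})=0$ makes Proposition~\ref{pp.1} applicable), and then every term on the right-hand side of Proposition~\ref{pp.1}(a) is zero, forcing $S^{\diamond}=0$. Your caveat about distinguishing the Cartan torsion $T$ appearing in Theorem~\ref{th.r5} and Lemma~\ref{le.4} from the intrinsic torsion $T^{\diamond}$ of $D^{\diamond}$ is exactly the right point to flag. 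Your primary route --- observing that $S^{\diamond}$ is built only from vertical covariant derivatives, that $[\gamma\overline{X},\gamma\overline{Y}]$ is vertical, and that $D^{\diamond}_{\gamma\overline{X}}=D^{\circ}_{\gamma\overline{X}}$ by Theorem~\ref{th.r5}(a), whence $S^{\diamond}=S^{\circ}=0$ by Proposition~\ref{.pp} --- is a valid variant; it buys a structural explanation (the v-curvature depends only on the vertical part of the connection, which Chern shares with Berwald) at the cost of routing through the Berwald result rather than arguing directly from torsion-freeness, which is the shorter, self-contained one-line argument the paper intends.
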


\begin{proof} The proof is similar to that of proposition \ref{.pp}.
\end{proof}

\begin{thm}\label{.thm}
 The hv-curvature   $P^{\diamond}$ of the Chern connection  has the properties\,\emph{:}\vspace{-0.2cm}
\begin{description}
 \item[(a)] $P^{\diamond}(\overline{X},\overline{Y},\overline{Z},\overline{W})
    +P^{\diamond}(\overline{X},\overline{Y},\overline{W},\overline{Z})
    =2(D^{\diamond}_{\beta\,{\overline{X}}}T)(\overline{Y},\overline{Z},\overline{W})
    -2T(\widehat{P^{\diamond}}(\overline{X},\overline{Y}),
    \overline{Z},\overline{W})$,

\item[(b)]$P^{\diamond}(\overline{X},\overline{Y})\overline{Z}=P^{\diamond}(\overline{Z},\overline{Y})\overline{X}$,

\item[(c)] $ P^{\diamond}(\overline{X}, \overline{Y}, \overline{Z}, \overline{W})
=   (D^{\diamond}_{\beta\, \overline{X}}T)(\overline{Y},
\overline{Z}, \overline{W})
   +(D^{\diamond}_{\beta\, \overline{Z}}T)(\overline{Y}, \overline{W}, \overline{X})
   -(D^{\diamond}_{\beta\,\overline{W}}T)(\overline{Y}, \overline{X}, \overline{Z})$ \\
    ${\qquad\qquad\qquad\quad\,\,\,}+T(\widehat{P^{\diamond}}(\overline{W},\overline{Y}), \overline{X},\overline{Z})
    -T(\widehat{P^{\diamond}}(\overline{X},\overline{Y}), \overline{Z},\overline{W})
    -T(\widehat{P^{\diamond}}(\overline{Z},\overline{Y}),
    \overline{W},\overline{X})$,

 \item[(d)] $ \widehat{P^{\diamond}}(\overline{\eta}, \overline{X})=0,$

  \item[(e)] $\widehat{P^{\diamond}}(\overline{X}, \overline{Y})=\widehat{P}(\overline{X},
  \overline{Y})=
  (D^{\diamond}_{\beta \overline{\eta}}T)(\overline{X},\overline{Y}),$

\item[(f)] $\widehat{{P^{\diamond}}}$  is  symmetric,

 \item[(g)] $P^{\diamond}(\overline{X},
 \overline{\eta})\overline{Y}=0,$\quad $P^{\diamond}(  \overline{\eta},  \overline{X}
)\overline{Y}=(D^{\diamond}_{\beta\,
\overline{\eta}}T)(\overline{X}, \overline{Y})$,

\item[(h)] $(D^{\diamond}_{\gamma
\overline{X}}P^{\diamond})(\overline{Z},\overline{Y},\overline{W})=
(D^{\diamond}_{\gamma
\overline{Y}}P^{\diamond})(\overline{Z},\overline{X},
\overline{W})$,

\item[(i)] $(D^{\diamond}_{\gamma \overline{\eta}}P^{\diamond})(\overline{X}, \overline{Y}, \overline{Z})=
-P^{\diamond}(\overline{X}, \overline{Y}) \overline{Z}$.
\end{description}
\end{thm}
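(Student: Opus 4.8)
\textbf{Proof proposal for Theorem~\ref{.thm}.}

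The whole theorem is an exercise in transporting the already-established properties of the Cartan hv-curvature $P$ (Theorem~\ref{.thm1}) and of the Cartan $T$-tensor (Proposition~\ref{pp.3}) across the explicit relation $D^{\diamond}_X\overline{Y}=\nabla_X\overline{Y}-T(KX,\overline{Y})$ of Theorem~\ref{th.r5}. The strategy is to use the general identities of \S2 specialized to the regular, torsion-free connection $D^{\diamond}$ (for which $Q^{\diamond}=0$, $\textbf{T}^{\diamond}=0$, $S^{\diamond}=0$, and whose (h)hv-torsion is exactly the Cartan $T$ by axiom~(i)), exactly as was done for the Berwald connection in Theorem~\ref{th}. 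Concretely: part~(a) comes from Lemma~\ref{lem.2}(c) with $X=\beta\overline{X}$, $Y=\gamma\overline{Y}$, using the metricity defect of $D^{\diamond}$ recorded in Lemma~\ref{le.4} ($D^{\diamond}_{\beta\overline{X}}g=0$ and $(D^{\diamond}_{\gamma\overline{X}}g)(\overline{Y},\overline{Z})=2T(\overline{X},\overline{Y},\overline{Z})$), then expanding the bracket $[\beta\overline{X},\gamma\overline{Y}]$ via Lemma~\ref{bracket}(b) and collecting covariant-derivative terms — the $\widehat{P^{\diamond}}$ correction appears because the mixed bracket contributes $\beta(D_{\gamma\overline{X}}\overline{Y}-T)$ and $-\gamma(\widehat{P^{\diamond}}+D_{\beta\overline{Y}}\overline{X})$; parts~(b) and~(h) are immediate from Proposition~\ref{pp.1}(b) and Proposition~\ref{pp.2}(b), once one feeds in $S^{\diamond}=0$.

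For part~(c), I would mimic the proof of Theorem~\ref{.thm1}(c): start from the Bianchi-type symmetry $P^{\diamond}(\overline{X},\overline{Y})\overline{Z}-P^{\diamond}(\overline{Z},\overline{Y})\overline{X}$ obtained from Proposition~\ref{pp.1}(b) with $Q^{\diamond}=0$, namely
$$P^{\diamond}(\overline{X},\overline{Y})\overline{Z}-P^{\diamond}(\overline{Z},\overline{Y})\overline{X}=(D^{\diamond}_{\beta\overline{Z}}T)(\overline{Y},\overline{X})-(D^{\diamond}_{\beta\overline{X}}T)(\overline{Y},\overline{Z})-T(\widehat{P^{\diamond}}(\overline{Z},\overline{Y}),\overline{X})+T(\widehat{P^{\diamond}}(\overline{X},\overline{Y}),\overline{Z}),$$
pass to the fully-covariant form by pairing with $\overline{W}$ (legitimate since $D^{\diamond}_{\beta\overline{X}}g=0$, which also gives the skew-symmetry $P^{\diamond}(\overline{X},\overline{Y},\overline{Z},\overline{W})=-P^{\diamond}(\overline{X},\overline{Y},\overline{W},\overline{Z})$ via Lemma~\ref{lem.2}(c) — this is one of the ingredients and should be proved first or cited from~\cite{r94}), then perform the three cyclic permutations on $\overline{X},\overline{Z},\overline{W}$, add two and subtract the third, and simplify using the symmetry of $T(\overline{X},\overline{Y},\overline{Z})$ in its last two slots (Proposition~\ref{pp.3}(a)) and the symmetry of $\widehat{P}=\widehat{P^{\diamond}}$ (part~(f)). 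Parts~(d)–(g) are then harvested from~(c) by specialization: set $\overline{X}=\overline{Z}=\overline{\eta}$ and use $T(\overline{\eta},\cdot)=0$, $K^{\diamond}\circ\beta=0$ for~(d); set $\overline{Z}=\overline{\eta}$ and invoke~(d) for~(e), which also identifies $\widehat{P^{\diamond}}=\widehat{P}$ via Theorem~\ref{th.r5}(b) (since the two connections differ only by a $\widehat{P}$-term along $\beta$, and at $\overline{\eta}$ this difference is controlled); symmetry of $T$ then gives~(f); and~(g) follows from~(c) with $\overline{X}=\overline{\eta}$ resp.\ $\overline{Y}=\overline{\eta}$ using~(d),~(e),~(f) and $T(\overline{\eta},\cdot)=0$. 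Finally~(i) is the $\overline{X}=\overline{\eta}$ specialization of~(h) combined with the now-standard manipulation $D^{\diamond}_{\gamma\overline{\eta}}\overline{Y}=\nabla_{\gamma\overline{\eta}}\overline{Y}-T(\overline{\eta},\overline{Y})$ and $(\nabla_{\gamma\overline{\eta}}T)(\overline{X},\overline{Y})=-T(\overline{X},\overline{Y})$ from Proposition~\ref{pp.3}(e), exactly as in Theorem~\ref{th}(f).

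The main obstacle is bookkeeping rather than conceptual: in part~(a) and especially part~(c) one must keep careful track of which $\widehat{P^{\diamond}}$-correction terms survive and with which sign after the bracket expansion and the cyclic-permutation cancellation, since $D^{\diamond}$ — unlike the Cartan connection — is not metric along vertical directions, so the passage between $P^{\diamond}(\cdot)\overline{Z}$ and $P^{\diamond}(\cdot,\cdot,\overline{Z},\overline{W})$ picks up the $2T(\overline{X},\overline{Y},\overline{Z})$ defect from Lemma~\ref{le.4}(a) whenever a $\gamma$-covariant derivative acts. I would handle this by always reducing to the Cartan connection via Theorem~\ref{th.r5} at the first opportunity, so that every identity is ultimately a consequence of the corresponding Cartan identity (Theorem~\ref{.thm1}) plus the known $T$- and $\widehat{P}$-identities, thereby minimizing the independent computation to the single bracket-expansion in~(a) and the single permutation argument in~(c).
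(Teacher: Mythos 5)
Most of your plan matches the paper's proof (part (a) via Lemma \ref{lem.2}(c) with $X=\beta\overline{X}$, $Y=\gamma\overline{Y}$, axiom (i) of Theorem \ref{th.r1} and Lemma \ref{bracket}; parts (b), (h) from Propositions \ref{pp.1}(b), \ref{pp.2}(b); parts (d)--(g), (i) by specialization). But your route to part (c) contains a genuine error. You apply Proposition \ref{pp.1}(b) to $D^{\diamond}$ and write
$P^{\diamond}(\overline{X},\overline{Y})\overline{Z}-P^{\diamond}(\overline{Z},\overline{Y})\overline{X}
=(D^{\diamond}_{\beta\overline{Z}}T)(\overline{Y},\overline{X})-(D^{\diamond}_{\beta\overline{X}}T)(\overline{Y},\overline{Z})-T(\widehat{P^{\diamond}}(\overline{Z},\overline{Y}),\overline{X})+T(\widehat{P^{\diamond}}(\overline{X},\overline{Y}),\overline{Z})$,
treating the Cartan tensor $T$ as if it were the (h)hv-torsion of the Chern connection. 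It is not: $\textbf{T}^{\diamond}=0$, so the torsion entries in Proposition \ref{pp.1}(b) all vanish and that proposition yields exactly part (b), $P^{\diamond}(\overline{X},\overline{Y})\overline{Z}=P^{\diamond}(\overline{Z},\overline{Y})\overline{X}$, with no $T$-terms at all; the Cartan $T$ enters the Chern identities only through the non-metricity axiom (i), i.e.\ through the symmetrization formula (a). Your displayed formula is thus inconsistent with (b), which you yourself derive two lines earlier, and permuting it cannot produce (c).

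The second error compounds this: you claim the skew-symmetry $P^{\diamond}(\overline{X},\overline{Y},\overline{Z},\overline{W})=-P^{\diamond}(\overline{X},\overline{Y},\overline{W},\overline{Z})$ follows from $D^{\diamond}_{\beta\overline{X}}g=0$ via Lemma \ref{lem.2}(c). That is false and contradicts part (a) of the theorem, whose whole content is that this symmetrization equals $2(D^{\diamond}_{\beta\overline{X}}T)(\overline{Y},\overline{Z},\overline{W})-2T(\widehat{P^{\diamond}}(\overline{X},\overline{Y}),\overline{Z},\overline{W})\neq0$ in general: in Lemma \ref{lem.2}(c) with $X=\beta\overline{X}$, $Y=\gamma\overline{Y}$ the terms $D_{\beta\overline{X}}(D_{\gamma\overline{Y}}g)$ and $D_{[\beta\overline{X},\gamma\overline{Y}]}g$ do not vanish, because $D^{\diamond}_{\gamma\overline{Y}}g=2T(\overline{Y},\cdot,\cdot)$ and the mixed bracket has a nonzero vertical component. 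The paper's proof of (c) avoids both pitfalls: one first checks $(D^{\diamond}_{\beta\overline{X}}T)(\overline{Y},\overline{Z},\overline{W})=g((D^{\diamond}_{\beta\overline{X}}T)(\overline{Y},\overline{Z}),\overline{W})$ (using $D^{\diamond}_{\beta\overline{X}}g=0$), then cyclically permutes the symmetrization identity (a) over $\overline{X},\overline{Z},\overline{W}$, adds two of the resulting equations and subtracts the third, and uses (b) to recombine the $P^{\diamond}$-terms. If you replace your permutation of the (incorrect) difference formula by a permutation of (a) in this way, the rest of your argument for (d)--(g) and (i) goes through as you describe.
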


\begin{proof}~\par

\vspace{4pt}
 \noindent\textbf{(a)} By Lemma \ref{lem.2}(c), together with  Theorem \ref{th.r1}(i), we get
\begin{equation}\label{r.000}
\left.
    \begin{array}{rcl}
    g(\textbf{K}^{\diamond}(X,Y)\overline{Z},\overline{W})&+&g(\textbf{K}^{\diamond}(X,Y)\overline{W},\overline{Z})=\\
       &=&2\,\mathfrak{U}_{X,Y}\{
X\cdot g(T(K Y,\overline{W}),\overline{Z})
  + g(T(K X,D^{\diamond}_{Y}\overline{W}),\overline{Z})\\
  &&+
   g(T(K X,\overline{W}),D^{\diamond}_{Y}\overline{Z})\}
  -2g(T(K[X,Y],\overline{W}),\overline{Z}).
 \end{array}
  \right.
\end{equation}

From which, by setting  $X=\beta\, \overline{X}$ and $Y=\gamma
\overline{Y}$ in (\ref{r.000}) and using Lemma \ref{bracket}, noting
that  $\textbf{T}^{\diamond}=0$, we get
\begin{eqnarray*}
   P^{\diamond}(\overline{X},\overline{Y},\overline{Z},\overline{W})
   +P^{\diamond}(\overline{X},\overline{Y},\overline{W},\overline{Z})
&=&2\beta\,\overline{X}\cdot T(
\overline{Y},\overline{W},\overline{Z})
   - 2T(\overline{Y},D^{\diamond}_{\beta\,\overline{X}}\overline{W},\overline{Z}) \\
   &&- 2T(\overline{Y},\overline{W},D^{\diamond}_{\beta\,\overline{X}}\overline{Z})
  -2T(\widehat{P^{\diamond}}(\overline{X},\overline{Y}),\overline{W},\overline{Z})\\
  &&-2T( D^{\diamond}_{\beta\,\overline{X}}\overline{Y},\overline{W},\overline{Z}).
  \end{eqnarray*}
Hence, the result follows.

\vspace{4pt}
 \noindent\textbf{(b)} Follows from Proposition \ref{pp.1}(b),
     making use of the hypothesis that  $\textbf{T}^{\diamond}=0$.

\vspace{4pt}
 \noindent\textbf{(c)} Firstly, one can easily show that\vspace{-0.2cm}
 \begin{equation}\label{r.eq}
   (D^{\diamond}_{\beta\,{\overline{X}}}T)(\overline{Y},\overline{Z},\overline{W})=
g((D^{\diamond}_{\beta\,{\overline{X}}}T)(\overline{Y},\overline{Z}),\overline{W}).\vspace{-0.2cm}
 \end{equation}
Cyclic permutation on $\overline{X}, \overline{Z}, \overline{W}$ in
the formula given by (a) above yields three equations. Adding two of
these equations and subtracting the third, taking into account
(\ref{r.eq}) and  the property (b), gives\vspace{-0.2cm}
 \begin{equation}\label{r.01}
\left.
    \begin{array}{rcl}
   P^{\diamond}(\overline{X}, \overline{Y}, \overline{Z}, \overline{W}) &=&
   (D^{\diamond}_{\beta\, \overline{X}}T)(\overline{Y}, \overline{Z}, \overline{W})
   +(D^{\diamond}_{\beta\, \overline{Z}}T)(\overline{Y}, \overline{W}, \overline{X})
   -(D^{\diamond}_{\beta\,\overline{W}}T)(\overline{Y}, \overline{X}, \overline{Z}) \\
    &&+T(\widehat{P^{\diamond}}(\overline{W},\overline{Y}), \overline{X},\overline{Z})
    -T(\widehat{P^{\diamond}}(\overline{X},\overline{Y}), \overline{Z},\overline{W})
    -T(\widehat{P^{\diamond}}(\overline{Z},\overline{Y}),
    \overline{W},\overline{X}).
\end{array}
  \right.
\end{equation}

\vspace{4pt}
 \noindent\textbf{(d)} Follows from (c) by setting $\overline{X}=\overline{\eta}$ and
$\overline{Z}=\overline{\eta}$ and  making use of the properties of
the (h)hv-torsion $T$.

\vspace{4pt}
 \noindent\textbf{(e)} Follows from (c) by setting $\overline{Z}=\overline{\eta}$, taking
 (d), the properties of $T$ and the identity  $D^{\diamond}_{\beta\,
\overline{X}}\overline{Y}=
 \nabla_{\beta\, \overline{X}}\overline{Y}$ into account.

\vspace{4pt}
 \noindent\textbf{(f)} Follows from (e) and  the symmetry of $T$.

\vspace{4pt}
 \noindent\textbf{(g)} Follows from (c) by setting $\overline{Y}=\overline{\eta}$ (resp. $\overline{X}=\overline{\eta}$),
making use of the obtained  properties of $\widehat{P^{\diamond}}$
and $T$.

\vspace{4pt}
 \noindent\textbf{(h)} Follows from Proposition \ref{pp.2}(b), taking into account that
$S^{\diamond}= T^{\diamond}=0$.

\vspace{4pt}
\noindent\textbf{(i)} Follows from (h) by setting
$\overline{X}=\overline{\eta}$ and  making use of (g).
\end{proof}

\begin{cor}\label{.le.3}Let $(M,L)$ be a Finsler manifold. The following
assertion  are equivalent.\vspace{-0.2cm}
\begin{description}
  \item[(a)] The hv-curvature tensor $P$ vanishes\,: $P=0$,
  \item[(b)] The (v)hv-torsion  tensor $\widehat{P}$ vanishes\,:
  $\widehat{P}=0$.
\item[(c)] The (v)hv-torsion  tensor $\widehat{{P}^{\diamond}}$ vanishes\,:
  $\widehat{{P}^{\diamond}}=0$.
\end{description}
\end{cor}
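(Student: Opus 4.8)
The statement is a chain of equivalences, of which (a)$\Rightarrow$(b) and (b)$\Leftrightarrow$(c) are immediate and (b)$\Rightarrow$(a) carries the content. For (a)$\Rightarrow$(b): by definition $\widehat P(\overline X,\overline Y)=P(\overline X,\overline Y)\overline\eta$, so $P=0$ forces $\widehat P=0$. For (b)$\Leftrightarrow$(c): Theorem~\ref{.thm}(e) records the identity $\widehat{P^{\diamond}}(\overline X,\overline Y)=\widehat P(\overline X,\overline Y)$, so the two tensors vanish simultaneously. Everything therefore reduces to showing $\widehat P=0\ \Rightarrow\ P=0$.

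Assume $\widehat P=0$. First, Theorem~\ref{th.5}(b) gives $D^{\circ}_{\beta\overline X}\overline Y=\nabla_{\beta\overline X}\overline Y+\widehat P(\overline X,\overline Y)=\nabla_{\beta\overline X}\overline Y$, so the Berwald and Cartan connections perform the same horizontal differentiation on $\pi$-vector fields; applying the Leibniz rule to $T$ slot by slot, this upgrades to the tensor identity $D^{\circ}_{\beta\overline X}T=\nabla_{\beta\overline X}T$. Next, in Theorem~\ref{th}(b) the term $(D^{\circ}_{\gamma\overline Y}\widehat P)(\overline X,\overline Z,\overline W)$ vanishes because $\widehat P\equiv0$, while the left-hand side equals $2P^{\circ}(\overline X,\overline Y,\overline Z,\overline W)$ since $P^{\circ}$ is totally symmetric (Theorem~\ref{th}(e)). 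Hence
$$P^{\circ}(\overline X,\overline Y,\overline Z,\overline W)=(D^{\circ}_{\beta\overline X}T)(\overline Y,\overline Z,\overline W)=g\big((\nabla_{\beta\overline X}T)(\overline Y,\overline Z),\overline W\big),$$
and this $(0,4)$ $\pi$-tensor is, again by Theorem~\ref{th}(e), symmetric under every permutation of $\overline X,\overline Y,\overline Z,\overline W$. In words, the Landsberg-type condition $\widehat P=0$ makes the horizontal covariant differential of the Cartan torsion totally symmetric.

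Finally I substitute this into the structure equation for the Cartan hv-curvature. Theorem~\ref{.thm1}(c), with the two terms containing $\widehat P$ discarded, reads
$$P(\overline X,\overline Y,\overline Z,\overline W)=g\big((\nabla_{\beta\overline Z}T)(\overline X,\overline Y),\overline W\big)-g\big((\nabla_{\beta\overline W}T)(\overline X,\overline Y),\overline Z\big)=P^{\circ}(\overline Z,\overline X,\overline Y,\overline W)-P^{\circ}(\overline W,\overline X,\overline Y,\overline Z),$$
and by the total symmetry of $P^{\circ}$ the two terms on the right coincide. Thus $P=0$, which closes the cycle of equivalences.

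The one delicate step is the middle paragraph: the bare structure equation of Theorem~\ref{.thm1}(c) only expresses $P$ as the antisymmetrization of $\nabla_{\beta\,\cdot}T$ in its direction slot against its last slot, and this is nonzero for a general Finsler metric; what forces it to vanish under $\widehat P=0$ is precisely the total symmetry of the Berwald hv-curvature together with the explicit Berwald--Cartan comparison of Theorem~\ref{th.5}. I expect this to be exactly where the specifically Finslerian input (rather than formal tensor algebra) is needed; everything else is bookkeeping with the symmetries already available in Proposition~\ref{pp.3} and Theorems~\ref{th} and \ref{.thm}.
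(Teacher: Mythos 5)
Your treatment of (a)$\Rightarrow$(b) and (b)$\Leftrightarrow$(c) coincides with the paper's. The gap is in (b)$\Rightarrow$(a), at the two places where you use Theorem \ref{th}(e) to treat the $(0,4)$-form of $P^{\circ}$ as symmetric in all four slots: first when you replace the symmetrized left-hand side of Theorem \ref{th}(b) by $2P^{\circ}(\overline{X},\overline{Y},\overline{Z},\overline{W})$, and again when you identify $P^{\circ}(\overline{Z},\overline{X},\overline{Y},\overline{W})$ with $P^{\circ}(\overline{W},\overline{X},\overline{Y},\overline{Z})$. The total symmetry established in Theorem \ref{th}(e) is that of the $(1,3)$-tensor $P^{\circ}(\overline{X},\overline{Y})\overline{Z}$ in its three vector arguments (its proof sets $\overline{W}=\overline{\eta}$ in (d) and uses (a),(c)); it says nothing about interchanging a vector slot with the fourth slot, which enters only through $g$ via (\ref{cur.g}). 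That extra symmetry is not available in general -- its failure is exactly measured by Theorem \ref{th}(b) -- and under the hypothesis $\widehat{P}=0$ it is \emph{equivalent} to the conclusion: with $\widehat{P}=0$ one has $P^{\circ}(\overline{X},\overline{Y})\overline{Z}=P(\overline{X},\overline{Y})\overline{Z}+(\nabla_{\beta\overline{X}}T)(\overline{Y},\overline{Z})$ (Table 2 of the appendix; note that Theorem \ref{th.5} alone only gives equality of the horizontal derivatives, not of the hv-curvatures), and since the $\nabla_{\beta}T$-part is symmetric in the metric slot by Proposition \ref{pp.3}(b) while $P$ is antisymmetric there by Theorem \ref{.thm1}(a), the symmetry $P^{\circ}(\overline{X},\overline{Y},\overline{Z},\overline{W})=P^{\circ}(\overline{X},\overline{Y},\overline{W},\overline{Z})$ holds precisely when $P=0$. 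So the middle paragraph of your argument assumes what is to be proved.

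The missing symmetry has to come from elsewhere. The paper produces it from the Bianchi-type identity of Theorem \ref{th..2}(j): setting $\overline{W}=\overline{\eta}$ there and using $\widehat{S}=0$, $\widehat{P}=0$ and $K\circ\beta=0$ gives $P(\overline{X},\overline{Y})\overline{Z}=P(\overline{X},\overline{Z})\overline{Y}$; feeding this into Theorem \ref{.thm1}(c) (with the $\widehat{P}$-terms dropped) together with Proposition \ref{pp.3}(b) yields $(\nabla_{\beta\overline{Z}}T)(\overline{X},\overline{Y})=(\nabla_{\beta\overline{Y}}T)(\overline{X},\overline{Z})$, so that $\nabla_{\beta}T$ becomes totally symmetric, and then the same formula \ref{.thm1}(c), being an antisymmetrization of $\nabla_{\beta}T$, forces $P=0$. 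Your Berwald detour can be repaired, but only by first proving the comparison $P^{\circ}=P+\nabla_{\beta}T$ under $\widehat{P}=0$ and then exploiting the genuine three-argument symmetry of Theorem \ref{th}(e) (symmetry of $P^{\circ}$ in its first two arguments, combined with \ref{.thm1}(a),(c) and \ref{pp.3}(b), again makes $\nabla_{\beta}T$ totally symmetric); as written, the appeal to ``total symmetry of the $(0,4)$-tensor'' does not deliver this.
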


\begin{proof}~\par

\vspace{4pt}
 \noindent\textbf{(a)$\Longrightarrow$(b)}: Trivial.

\vspace{4pt}
 \noindent (b)\textbf{$\Longrightarrow$(a)}:  Suppose that
$\widehat{P}$
 vanishes. From Theorem \ref{th..2}(i), we
 have\vspace{-0.2cm}
\begin{eqnarray*}
 (\nabla_{\beta\overline{Z}}S)(\overline{X},\overline{Y},\overline{W}
)&=&(\nabla_{\gamma
\overline{X}}P)(\overline{Z},\overline{Y},\overline{W})-
(\nabla_{\gamma \overline{Y}}P)(\overline{Z},\overline{X},
\overline{W})-
S(\widehat{P}(\overline{Z},\overline{Y}),\overline{X})\overline{W}\\
&&+S(\widehat{P}(\overline{Z},\overline{X}),\overline{Y})\overline{W}
-P(T(\overline{Y},\overline{Z}),\overline{X})\overline{W}+
P(T(\overline{X},\overline{Z}),\overline{Y})\overline{W}.\vspace{-0.2cm}
\end{eqnarray*}
Setting $ \overline{W}= \overline{\eta}$ in the above relation,
taking into account that $\widehat{S}=0$, we get\vspace{-0.2cm}
\begin{equation}\label{.eq.2}
   P(\overline{X},\overline{Y})\overline{Z}=P(\overline{X},\overline{Z})\overline{Y}.\vspace{-0.2cm}
\end{equation}

On the other  hand, from Theorem \ref{.thm1}(c), making use of the
given assumption, we have
\begin{equation}\label{.eq.3}
  P(\overline{X},\overline{Y},\overline{Z}, \overline{W}) =
   g(( \nabla_{\beta \overline{Z}}T)(\overline{X}, \overline{Y}),\overline{W})
   -g(( \nabla_{\beta \overline{W}}T)(\overline{X},\overline{Y}),\overline{Z}).
\end{equation}
From which, together with  (\ref{.eq.2}) and  $g(( \nabla_{\beta
\overline{W}}T)(\overline{X},\overline{Y}),\overline{Z})=g((
\nabla_{\beta
\overline{W}}T)(\overline{X},\overline{Z}),\overline{Y})$,  we
obtain
 $$( \nabla_{\beta
\overline{Z}}T)(\overline{X},\overline{Y})=( \nabla_{\beta
\overline{Y}}T)(\overline{X},\overline{Z})$$
 Now, again  from (\ref{.eq.3}) the result follows.

\vspace{4pt}
 \noindent  \textbf{(b)}$\Longleftrightarrow$\textbf{(c)}: Follows
from Theorem \ref{.thm}(e).
\end{proof}


\begin{thm} The h-curvature  tensor $R^{\diamond}$ of the Chern connection
has the properties\,:\vspace{-0.2cm}

\begin{description}

 \item[(a)] $ R^{\diamond}(\overline{X},\overline{Y},\overline{Z}, \overline{W})=
 -R^{\diamond}(\overline{Y},\overline{X},\overline{Z},\overline{W})$,

\item[(b)] $ \widehat{R^{\diamond}}(\overline{X}, \overline{Y})=\widehat{R}(\overline{X}, \overline{Y})
 =- K\mathfrak{R}(\beta \overline{X},\beta \overline{Y})$,

 \item[(c)]  $ R^{\diamond}(\overline{X},\overline{Y},\overline{Z}, \overline{W})=
 -R^{\diamond}(\overline{X},\overline{Y},\overline{W},\overline{Z})-2
 T(\widehat{R}(\overline{X},\overline{Y}),\overline{Z}, \overline{W})$,

\item[(d)] $\mathfrak{S}_{\overline{X},\overline{Y},\overline{Z}}\,
\{R^{\diamond}(\overline{X}, \overline{Y})\overline{Z}\}=0,$

\item[(e)] $\mathfrak{S}_{\overline{X},\overline{Y},\overline{Z}}\,
\{(D^{\diamond}_{\beta \overline{X}}R^{\diamond})(\overline{Y},
\overline{Z},\overline{W})+P^{\diamond}(\overline{X},\widehat{R}(\overline{Y},\overline{Z}))\overline{W}\}=0$,

\item[(f)] $(D^{\diamond}_{\gamma\overline{X}}R^{\diamond})(\overline{Y},\overline{Z},\overline{W})
   + (D^{\diamond}_{\beta\overline{Y}}P^{\diamond})(\overline{Z},\overline{X},\overline{W})-
   (D^{\diamond}_{\beta
   \overline{Z}}P^{\diamond})(\overline{Y},\overline{X},\overline{W})$\\
$-
P^{\diamond}(\overline{Z},\widehat{P}(\overline{Y},\overline{X}))\overline{W}
+ P^{\diamond}(\overline{Y},
\widehat{P}(\overline{Z},\overline{X}))\overline{W}=0 ,$

\item[(g)] $(D^{\diamond}_{\gamma \overline{\eta}}R^{\diamond})(\overline{X}, \overline{Y},
\overline{Z})=0$.
\end{description}
\end{thm}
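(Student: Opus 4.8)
The plan is to deduce each of \textbf{(a)}--\textbf{(g)} from the general identities of $\S 2$ together with the structural properties of the Chern connection already at our disposal, following the pattern used for $R$ and $R^{\circ}$ in Theorems \ref{th.Rc} and \ref{th1}. The ingredients I will use repeatedly are: $\textbf{T}^{\diamond}=0$, so that both the (h)h-torsion $Q^{\diamond}$ and the (h)hv-torsion of $D^{\diamond}$ vanish; $S^{\diamond}=0$; $\widehat{R^{\diamond}}=\widehat{R}$ \cite{r94} and $\widehat{P^{\diamond}}=\widehat{P}$ (Theorem \ref{.thm}(e)); and the partial metricity of the Chern connection, namely $D^{\diamond}_{\beta\overline{X}}g=0$ and $(D^{\diamond}_{\gamma\overline{X}}g)(\overline{Y},\overline{Z})=2T(\overline{X},\overline{Y},\overline{Z})$ (Lemma \ref{le.4}).

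Property \textbf{(a)} is immediate from the skew-symmetry of $\textbf{K}^{\diamond}$ in its first two arguments, and \textbf{(b)} follows at once from Theorem \ref{th.Rc}(c) and the identity $\widehat{R^{\diamond}}=\widehat{R}$ (together with $\beta^{\diamond}=\beta$, $K^{\diamond}=K$). For \textbf{(c)} I would substitute $X=\beta\overline{X}$, $Y=\beta\overline{Y}$ into Lemma \ref{lem.2}(c). The term $\mathfrak{U}_{X,Y}\{(D^{\diamond}_{X}(D^{\diamond}_{Y}g))(\overline{W},\overline{Z})\}$ vanishes since $D^{\diamond}_{\beta\overline{X}}g=0$, so the left-hand side reduces to $-(D^{\diamond}_{[\beta\overline{X},\beta\overline{Y}]}g)(\overline{W},\overline{Z})$. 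Lemma \ref{bracket}(a), together with $Q^{\diamond}=0$, decomposes $[\beta\overline{X},\beta\overline{Y}]$ into the vertical vector $\gamma\widehat{R}(\overline{X},\overline{Y})$ and a horizontal vector; the horizontal part contributes nothing because $D^{\diamond}g$ annihilates horizontal directions, while the vertical part contributes $2T(\widehat{R}(\overline{X},\overline{Y}),\overline{W},\overline{Z})$ by Lemma \ref{le.4}(a). Rewriting this, via the total symmetry of $T$ (Proposition \ref{pp.3}(f)), as $2T(\widehat{R}(\overline{X},\overline{Y}),\overline{Z},\overline{W})$ yields \textbf{(c)}. Because the Chern connection is only horizontally metric, this is the step where one must keep careful track of which directions the metricity defect lives in; it is the analogue of Theorem \ref{th1}(c) and, I expect, the main (though still routine) obstacle.

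Properties \textbf{(d)}, \textbf{(e)} and \textbf{(f)} should then come straight out of the Bianchi-type identities of $\S 2$ specialized to $D=D^{\diamond}$: \textbf{(d)} from Proposition \ref{pp.1}(c), whose right-hand side vanishes because $Q^{\diamond}=0$ and whose torsion correction term vanishes because the (h)hv-torsion of $D^{\diamond}$ is zero; \textbf{(e)} from Proposition \ref{pp.2}(d), discarding the $R^{\diamond}(Q^{\diamond}(\overline{X},\overline{Y}),\overline{Z})\overline{W}$ term and replacing $\widehat{R^{\diamond}}$ by $\widehat{R}$; and \textbf{(f)} from Proposition \ref{pp.2}(c), in which the terms involving $Q^{\diamond}$, the (h)hv-torsion of $D^{\diamond}$, and $S^{\diamond}$ all drop out, leaving only the two $P^{\diamond}$-terms with $\widehat{P^{\diamond}}=\widehat{P}$.

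Finally, for \textbf{(g)} I would set $\overline{X}=\overline{\eta}$ in \textbf{(f)}. The two terms $P^{\diamond}(\overline{Z},\widehat{P}(\overline{Y},\overline{\eta}))\overline{W}$ and $P^{\diamond}(\overline{Y},\widehat{P}(\overline{Z},\overline{\eta}))\overline{W}$ vanish because $\widehat{P}(\overline{X},\overline{\eta})=\widehat{P}(\overline{\eta},\overline{X})=0$ (Theorem \ref{.thm1}(d),(f)); and each covariant-derivative term, for instance $(D^{\diamond}_{\beta\overline{Y}}P^{\diamond})(\overline{Z},\overline{\eta},\overline{W})$, vanishes upon expansion since $P^{\diamond}(\overline{Z},\overline{\eta})\overline{W}=0$ (Theorem \ref{.thm}(g)) and $D^{\diamond}_{\beta\overline{Y}}\overline{\eta}=K^{\diamond}(\beta\overline{Y})=0$. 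What remains is precisely $(D^{\diamond}_{\gamma\overline{\eta}}R^{\diamond})(\overline{X},\overline{Y},\overline{Z})=0$, which is \textbf{(g)}.
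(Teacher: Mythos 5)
Your proposal is correct and follows essentially the same route as the paper: parts (a)--(b) and (d)--(g) invoke exactly the same ingredients (skew-symmetry of $\textbf{K}^{\diamond}$, Theorem \ref{th.Rc}(c) with $\widehat{R^{\diamond}}=\widehat{R}$, Propositions \ref{pp.1}(c) and \ref{pp.2}(c),(d) with $\textbf{T}^{\diamond}=Q^{\diamond}=S^{\diamond}=0$ and $\widehat{P^{\diamond}}=\widehat{P}$, and the specialization $\overline{X}=\overline{\eta}$ in (f)). For (c) you feed the partial metricity of $D^{\diamond}$ into Lemma \ref{lem.2}(c) directly rather than quoting the displayed identity (\ref{r.000}), but since (\ref{r.000}) is precisely Lemma \ref{lem.2}(c) combined with the Chern metricity axiom, your computation (via Lemma \ref{bracket}(a) and the total symmetry of $T$, keeping the overall minus sign from the $D^{\diamond}_{[\beta\overline{X},\beta\overline{Y}]}g$ term) is the same as the paper's.
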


\begin{proof}~\par

\vspace{4pt}
 \noindent  \textbf{(b)} Follows from the identity $\widehat{R^{\diamond}}=\widehat{R}$ \cite{r94} together with
Theorem \ref{th.Rc}(c).

\vspace{4pt}
 \noindent  \textbf{(c)} Follows from (\ref{r.000}) by setting $X=\beta \overline{X}$ and $Y=\beta \overline{Y}$,
 making use of Lemma \ref{bracket}(a) and the identity
$\widehat{R^{\diamond}}=\widehat{R}$.

\vspace{4pt}
 \noindent  \textbf{(d)}  Follows from Proposition \ref{pp.1}(c), taking into
account the fact that $\textbf{T}^{\diamond}=0$.

\vspace{4pt}
 \noindent  \textbf{(e)}  Follows from Proposition \ref{pp.2}(d) together with $Q^{\diamond}=0$, making use of (c) above.

\vspace{4pt}
 \noindent  \textbf{(f)}  Follows from Proposition \ref{pp.2}(c), noting that
$\textbf{T}^{\diamond}=S^{\diamond}=0$ and
$\widehat{P^{\diamond}}=\widehat{P}$.

\vspace{4pt}
 \noindent  \textbf{(g)} Follows from (f) by setting
$\overline{X}=\overline{\eta}$, using the obtained properties of the
hv-curvature tensor $P^{\diamond}$.
\end{proof}


\Section{Fundamental tensors associated\vspace{5pt} with the
Hashiguchi connection}

 As in the previous section, we investigate
 the fundamental relations and properties of the most important tensors  associated with
the Hashiguchi connection.
\begin{thm}\emph{\cite{r94}} \label{th.h2} Let $(M,L)$ be a Finsler manifold and  $g$ the Finsler metric
defined by$\,L$. There exists a unique regular  connection ${D}^{*}$
on $\pi^{-1}(TM)$ such that
\begin{description}

  \item[(i)]  ${D}^{*}$ is vertically  metric\,{\em:} ${D}^{*}_{\gamma \overline{X}}\, g=0$,

   \item[(ii)]  The (h)hv-torsion  $T^{*}$ of ${D}^{*}$ satisfies\,{\em:}
  $g({T}^{*}(\overline{X},\overline{Y}), \overline{Z})=
  g({T}^{*}(\overline{X},\overline{Z}),\overline{Y})$,

 \item[(iii)]   The (h)h-torsion of ${D}^{*}$ vanishes\,{\em:} $Q^{*}=0
  $,

  \item[(iv)]The (v)hv-torsion of ${D}^{*}$ vanishes\,{\em:}
  $\widehat{{P}^{*}}=0$,

\item[(v)] $D^{*}_{h^{*}X}L=0$.
  \end{description}
   \par Such a connection is called the Hashiguchi connection associated with the
Finsler manifold $(M,L)$.
\end{thm}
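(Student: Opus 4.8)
The plan is to mimic the existence/uniqueness proofs already carried out for the Cartan, Berwald and Chern connections: guess the explicit formula, verify the five axioms, and then show that any connection obeying them coincides with the guessed one. Since the Chern connection is the one that is \emph{horizontally Cartan and vertically Berwald} (Theorem \ref{th.r5}), the Hashiguchi connection should be its mirror image, \emph{vertically Cartan and horizontally Berwald}; so I would propose
\begin{equation*}
D^{*}_{X}\overline{Y}:=\nabla_{X}\overline{Y}+\widehat{P}(\rho X,\overline{Y})\qquad (X\in\cpp,\ \overline{Y}\in\cp),
\end{equation*}
equivalently $D^{*}_{\gamma\overline{X}}\overline{Y}=\nabla_{\gamma\overline{X}}\overline{Y}$ and $D^{*}_{\beta\overline{X}}\overline{Y}=\nabla_{\beta\overline{X}}\overline{Y}+\widehat{P}(\overline{X},\overline{Y})=D^{\circ}_{\beta\overline{X}}\overline{Y}$ (Theorem \ref{th.5}). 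The first observation is that $K^{*}X=D^{*}_{X}\overline{\eta}=\nabla_{X}\overline{\eta}+\widehat{P}(\rho X,\overline{\eta})=\nabla_{X}\overline{\eta}=KX$, the middle term vanishing because $\widehat{P}$ is symmetric and $\widehat{P}(\overline{\eta},\overline{X})=0$ (Theorem \ref{.thm1}(d),(f)); hence $D^{*}$ is regular with $K^{*}=K$, $\beta^{*}=\beta$, $h^{*}=h$, $v^{*}=v$, and its associated nonlinear connection is the Barthel connection by Theorem \ref{c.ba.}.

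\emph{Existence.} With $D^{*}$ as above I would check the five axioms in turn. Axiom (i) is immediate from $D^{*}_{\gamma\overline{X}}\overline{Y}=\nabla_{\gamma\overline{X}}\overline{Y}$ and $\nabla g=0$, and axiom (v) from $D^{*}_{\beta\overline{X}}\overline{Y}=D^{\circ}_{\beta\overline{X}}\overline{Y}$ together with axiom (i) of Theorem \ref{bth2.h2}. For the torsion axioms I would compute, using the bracket formulas of Lemma \ref{bracket} for the Cartan connection (legitimate by Proposition \ref{pp.3}(c)): $\textbf{T}^{*}(\gamma\overline{X},\beta\overline{Y})=T(\overline{X},\overline{Y})$, so that $T^{*}=T$ and axiom (ii) is exactly Proposition \ref{pp.3}(a); $\textbf{T}^{*}(\beta\overline{X},\beta\overline{Y})=Q(\overline{X},\overline{Y})+\widehat{P}(\overline{X},\overline{Y})-\widehat{P}(\overline{Y},\overline{X})=0$ by Theorem \ref{th.1}(ii) and the symmetry of $\widehat{P}$, giving axiom (iii); and, evaluating $\textbf{K}^{*}(\beta\overline{X},\gamma\overline{Y})\overline{\eta}$ by means of $K^{*}=K$, $K\circ\gamma=id_{\cp}$, $K\circ\beta=0$ and Lemma \ref{bracket}(b) (again for the Cartan connection), one obtains $\widehat{P^{*}}(\overline{X},\overline{Y})=-D^{*}_{\beta\overline{X}}\overline{Y}+\nabla_{\beta\overline{X}}\overline{Y}+\widehat{P}(\overline{X},\overline{Y})=0$, which is axiom (iv).

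\emph{Uniqueness.} Let $D'$ satisfy (i)--(v). The first step, parallel to the ``Moreover'' clauses of Theorems \ref{bth2.h2} and \ref{th.r1}, is to identify the nonlinear connection of $D'$ with the Barthel connection: axioms (ii) and (v) give $T'(\overline{X},\overline{\eta})=0$, so that by Proposition \ref{eqv.} the map $\Gamma_{D'}=2\beta'\circ\rho-I$ is a nonlinear connection with $K'\circ\gamma=id_{\cp}$, $h'=\beta'\circ\rho$, $v'=\gamma\circ K'$, and then $\Gamma_{D'}$, being conservative, homogeneous (from (v)) and torsion-free (from $Q'=0$), equals $[J,G]$ by Theorem \ref{th.9a}; hence $\beta'=\beta$ and $K'=K$. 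Once this is known, the vertical part is forced: setting $A(\overline{X},\overline{Y}):=D'_{\gamma\overline{X}}\overline{Y}-\nabla_{\gamma\overline{X}}\overline{Y}$ and noting $A(\overline{X},\overline{Y})=T'(\overline{X},\overline{Y})-T(\overline{X},\overline{Y})$, axiom (i) applied to both $D'$ and $\nabla$ makes $g(A(\overline{X},\overline{Y}),\overline{Z})$ skew in $\overline{Y},\overline{Z}$, while axiom (ii) with Proposition \ref{pp.3}(a) makes it symmetric in $\overline{Y},\overline{Z}$; so $A=0$, i.e. $D'_{\gamma\overline{X}}\overline{Y}=\nabla_{\gamma\overline{X}}\overline{Y}$. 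The horizontal part is forced by axiom (iv): repeating the evaluation of $\textbf{K}'(\beta\overline{X},\gamma\overline{Y})\overline{\eta}$ from the existence step (still using Lemma \ref{bracket}(b) for the Cartan connection, $K'\circ\gamma=id_{\cp}$ and $K'\circ\beta=0$) gives $\widehat{P'}(\overline{X},\overline{Y})=-D'_{\beta\overline{X}}\overline{Y}+\nabla_{\beta\overline{X}}\overline{Y}+\widehat{P}(\overline{X},\overline{Y})$, so $\widehat{P'}=0$ yields $D'_{\beta\overline{X}}\overline{Y}=\nabla_{\beta\overline{X}}\overline{Y}+\widehat{P}(\overline{X},\overline{Y})=D^{*}_{\beta\overline{X}}\overline{Y}$. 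Therefore $D'=D^{*}$.

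The step I expect to be the main obstacle is the first step of the uniqueness argument — identifying the nonlinear connection of $D'$ with the canonical (Barthel) one. In the Cartan case full metricity yields a Koszul formula that determines the whole connection, and in particular its deflection, at once; here the axiom list is \emph{asymmetric} (only vertical metricity, and a spread of torsion conditions instead of a single symmetry), so one must extract conservativeness and homogeneity from axiom (v), torsion-freeness from $Q'=0$, and feed these into Theorem \ref{th.9a}, exactly as in the existence proofs for the Berwald and Chern connections. Everything else is routine: the existence direction reduces to the bracket computations of Lemma \ref{bracket}, the only mildly delicate point being the curvature evaluation that yields $\widehat{P^{*}}=0$; and the uniqueness, once the nonlinear connection is pinned down, collapses to the ``symmetric and skew implies zero'' observation for the vertical part and a single curvature evaluation for the horizontal part.
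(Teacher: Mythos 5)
The paper itself contains no proof of this theorem --- it is imported verbatim from \cite{r94} --- so your proposal has to be judged on its own merits. Its existence half is correct: the guessed formula is exactly the one recorded in Theorem \ref{th.t12}, the observation $K^{*}=K$ via $\widehat{P}(\overline{X},\overline{\eta})=0$ is right, and the verifications of (i)--(v) through Lemma \ref{bracket} (applied to the Cartan connection, which is legitimate by Proposition \ref{pp.3}(c)) all go through. The two closing steps of your uniqueness argument are also sound: the ``skew plus symmetric'' determination of the vertical part, and the evaluation $\widehat{P'}(\overline{X},\overline{Y})=-D'_{\beta\overline{X}}\overline{Y}+\nabla_{\beta\overline{X}}\overline{Y}+\widehat{P}(\overline{X},\overline{Y})$ that forces the horizontal part.

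The genuine gap is precisely the step you flag as the main obstacle, and your proposed distribution of labour among the axioms there is wrong, not merely unproved. First, $T'(\overline{X},\overline{\eta})=0$ does not follow from (ii) and (v): since $T'(\overline{X},\overline{\eta})=K'\gamma\overline{X}-\rho[\gamma\overline{X},\beta'\overline{\eta}]=K'\gamma\overline{X}-\overline{X}$ (the identity $\rho[\gamma\overline{X},\beta'\overline{\eta}]=\overline{X}$ of \cite{r92} holds for any semispray), this statement is equivalent, via Proposition \ref{eqv.}, to $K'\circ\gamma=id$, and the natural way to obtain it is to run your vertical-part argument \emph{first}: it needs only (i) and (ii) and does not presuppose $\beta'=\beta$, because $\rho[\gamma\overline{X},\beta'\overline{Y}]=\rho[\gamma\overline{X},\beta\overline{Y}]$ (the difference of the two horizontal lifts is vertical and the vertical distribution is integrable); this gives $D'_{\gamma\overline{X}}\overline{Y}=\nabla_{\gamma\overline{X}}\overline{Y}$, hence $T'=T$ and then $T'(\overline{X},\overline{\eta})=0$. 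So the order of your uniqueness proof must be reversed. Second, and more seriously, homogeneity of $\Gamma_{D'}$ cannot be extracted from (v), which yields only conservativeness ($dL\circ h'=0$), and torsion-freeness of $\Gamma_{D'}$ does not follow from $Q'=0$ alone: both remaining hypotheses of Theorem \ref{th.9a} require axiom (iv). Indeed, once $K'\circ\gamma=id$ is known, the coefficients satisfy $N'^{i}_{j}=y^{k}F^{i}_{jk}$, while $\widehat{P'}=0$ reads $F^{i}_{jk}=\dot{\partial}_{k}N'^{i}_{j}$; contracting gives $N'^{i}_{j}=y^{k}\dot{\partial}_{k}N'^{i}_{j}$, i.e.\ homogeneity by Euler's theorem, and antisymmetrizing, together with the symmetry of $F$ coming from $Q'=0$, kills the weak torsion. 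Without (iv) the identification is simply false: there are plenty of conservative, non-Barthel nonlinear connections admitting symmetric horizontal coefficients, so axioms (i), (ii), (iii), (v) alone cannot pin down $\Gamma_{D'}$. Supplying this argument (preferably intrinsically rather than in coordinates) is the real content of the uniqueness proof; once it is in place, the remainder of your proposal is correct.
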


\begin{thm}\emph{\cite{r94}} \label{h.ba.}  The nonlinear connection associated with
the  Hashiguchi connection  $D^{*}$ coincides with the Barthel
connection{\em:}
$\Gamma_{{D}^{*}}=[J,G]$. Consequently, $\beta^*=\beta$ and $K^*=K$.\\
Moreover, the (h)hv-torsion   of the Hashiguchi connection coincides
with the (h)hv-torsion of the Cartan connection\,\emph{:} $T^{*}=T$.
\end{thm}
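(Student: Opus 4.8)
The plan is to deduce the entire statement from one observation: the \emph{vertical} covariant derivative of the Hashiguchi connection coincides with that of the Cartan connection, i.e.\ $D^{*}_{\gamma\overline{X}}\overline{Y}=\nabla_{\gamma\overline{X}}\overline{Y}$ for all $\overline{X},\overline{Y}\in\cp$. First I would record that for \emph{any} regular connection $D'$ on $\p$ with horizontal map $\beta'$ one has $\mathbf{T}'(\gamma\overline{X},\beta'\overline{Y})=D'_{\gamma\overline{X}}\overline{Y}-\rho[\gamma\overline{X},\beta'\overline{Y}]$, and that the last term is \emph{independent of the choice of horizontal lift}: two lifts of $\overline{Y}$ differ by a vertical field, and $\rho$ of a bracket of two vertical fields vanishes because the vertical distribution is integrable. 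Consequently the (h)hv-torsions of $D^{*}$ and of $\nabla$ have exactly the same form in terms of the respective vertical derivatives, and a Koszul-type computation (using only directional derivatives along the vertical directions $\gamma\overline{X},\gamma\overline{Y},\gamma\overline{Z}$) shows that vertical metricity together with the (h)hv-symmetry determines the operator $(\overline{X},\overline{Y})\mapsto D_{\gamma\overline{X}}\overline{Y}$ uniquely. Since these conditions are precisely axioms (i) and (ii) of Theorem~\ref{th.h2} for $D^{*}$, and (the vertical restriction of) axiom (i) together with axiom (iii) of Theorem~\ref{th.1} for $\nabla$, the two vertical derivatives coincide. I expect this step --- the intrinsic verification of that uniqueness, and in particular the coordinate-free handling of the bracket term --- to be the main obstacle; what follows is formal.

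Granting the coincidence of vertical parts, the last assertion is immediate: $T^{*}(\overline{X},\overline{Y})=D^{*}_{\gamma\overline{X}}\overline{Y}-\rho[\gamma\overline{X},\beta^{*}\overline{Y}]=\nabla_{\gamma\overline{X}}\overline{Y}-\rho[\gamma\overline{X},\beta\overline{Y}]=T(\overline{X},\overline{Y})$, again using the horizontal-lift independence of the bracket term (now comparing $\beta^{*}$ with the Cartan lift $\beta$); hence $T^{*}=T$. In particular $T^{*}(\overline{X},\overline{\eta})=T(\overline{X},\overline{\eta})=0$ by Proposition~\ref{pp.3}(c), so Proposition~\ref{eqv.} applies to $D^{*}$: the map $\Gamma_{D^{*}}=2\beta^{*}\circ\rho-I$ is a genuine nonlinear connection on $M$, it is the nonlinear connection associated with $D^{*}$, and $h_{\Gamma_{D^{*}}}=\beta^{*}\circ\rho$, $v_{\Gamma_{D^{*}}}=\gamma\circ K^{*}$, $K^{*}\circ\gamma=\mathrm{id}_{\cp}$.

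It then remains to identify $\Gamma_{D^{*}}$ with the Barthel connection $[J,G]$. By the uniqueness in Theorem~\ref{th.9a} it suffices to check that $\Gamma_{D^{*}}$ is conservative, homogeneous and torsion-free. Conservativeness ($d_{h_{D^{*}}}E=0$, with $E=\tfrac12 L^{2}$) is precisely axiom (v) of Theorem~\ref{th.h2}, and homogeneity follows from axiom (v) together with the positive homogeneity of $L$. Torsion-freeness of $\Gamma_{D^{*}}$ follows from axiom (iii), $Q^{*}=0$: applying Lemma~\ref{bracket}(a) to $D^{*}$ gives $[\beta^{*}\overline{X},\beta^{*}\overline{Y}]=\gamma\,\widehat{R^{*}}(\overline{X},\overline{Y})+\beta^{*}\bigl(D^{*}_{\beta^{*}\overline{X}}\overline{Y}-D^{*}_{\beta^{*}\overline{Y}}\overline{X}\bigr)$, which is exactly the symmetry $\rho[\beta^{*}\overline{X},\beta^{*}\overline{Y}]=D^{*}_{\beta^{*}\overline{X}}\overline{Y}-D^{*}_{\beta^{*}\overline{Y}}\overline{X}$ expressing torsion-freeness of $\Gamma_{D^{*}}$ in Grifone's sense. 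Therefore $\Gamma_{D^{*}}=[J,G]$, and hence $H^{*}(\T M)=H(\T M)$, so the horizontal map and the connection map of $D^{*}$ are those of the Barthel connection, i.e.\ $\beta^{*}=\beta$ and $K^{*}=K$ by Theorem~\ref{c.ba.}. If one prefers to bypass Theorem~\ref{th.9a} and the precise reading of axiom (v), the conclusion $\beta^{*}=\beta$, $K^{*}=K$ can instead be obtained from the explicit expression of $D^{*}$ in terms of $\nabla$, namely $D^{*}_{X}\overline{Y}=\nabla_{X}\overline{Y}+\widehat{P}(\rho X,\overline{Y})$ (the Hashiguchi analogue of Theorems~\ref{th.5} and \ref{th.r5}), which gives $K^{*}X=KX+P(\rho X,\overline{\eta})\overline{\eta}=KX$ by Theorem~\ref{.thm1}(g), whence $\Gamma_{D^{*}}=2\beta\circ\rho-I=\Gamma_{\nabla}=[J,G]$.
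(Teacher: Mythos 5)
The paper itself offers no proof of this theorem --- it is quoted from \cite{r94} --- so there is nothing in-text to compare against, and your argument must stand on its own. Its first half does: the observation that $\rho[\gamma\overline{X},Z]$ depends only on $\rho Z$ (two lifts differ by a vertical field, and a bracket of vertical fields is vertical) is correct, and your Koszul-type uniqueness is valid: if $B(\overline{X},\overline{Y})$ denotes the difference of the two vertical derivatives, vertical metricity of both connections makes $g(B(\overline{X},\overline{Y}),\overline{Z})$ skew in $(\overline{Y},\overline{Z})$, while the common torsion-symmetry axiom (Theorem \ref{th.h2}(i),(ii) versus Theorem \ref{th.1}(i),(iii)) makes it symmetric, so $B=0$, whence $D^{*}_{\gamma\overline{X}}\overline{Y}=\nabla_{\gamma\overline{X}}\overline{Y}$, $T^{*}=T$, $T^{*}(\overline{X},\overline{\eta})=0$, and Proposition \ref{eqv.} applies to $D^{*}$ exactly as you say.

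The gap is in the identification $\Gamma_{D^{*}}=[J,G]$ via Theorem \ref{th.9a}, where two of the three hypotheses are mis-justified. First, homogeneity does \emph{not} follow from axiom (v) plus the homogeneity of $L$; conservativeness and homogeneity are independent conditions (that is precisely why Grifone's uniqueness theorem assumes both). The correct source is axiom (iv): by Lemma \ref{bracket}(b) applied to $D^{*}$, $v[\mathcal{C},\beta^{*}\overline{X}]=v[\gamma\overline{\eta},\beta^{*}\overline{X}]=-\gamma\bigl(\widehat{P^{*}}(\overline{X},\overline{\eta})+K^{*}\beta^{*}\overline{X}\bigr)=0$, and together with Lemma \ref{bracket}(c) this gives $[\mathcal{C},\Gamma_{D^{*}}]=0$. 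Second, the identity $\rho[\beta^{*}\overline{X},\beta^{*}\overline{Y}]=D^{*}_{\beta^{*}\overline{X}}\overline{Y}-D^{*}_{\beta^{*}\overline{Y}}\overline{X}$ is merely a restatement of $Q^{*}=0$; it is not Grifone's weak torsion, which also sees the vertical parts of the mixed brackets. Computing $\tfrac12[J,\Gamma_{D^{*}}]$ on horizontal lifts with Lemma \ref{bracket} gives $\gamma\{Q^{*}(\overline{X},\overline{Y})+\widehat{P^{*}}(\overline{X},\overline{Y})-\widehat{P^{*}}(\overline{Y},\overline{X})\}$, so vanishing of the torsion again uses axiom (iv) (here trivially) and not axiom (iii) alone. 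Both defects are repairable with tools already in the paper, but as written the appeal to Theorem \ref{th.9a} is not justified. Finally, your fallback through $D^{*}_{X}\overline{Y}=\nabla_{X}\overline{Y}+\widehat{P}(\rho X,\overline{Y})$ does yield $K^{*}=K$ at once (since $\widehat{P}(\overline{X},\overline{\eta})=0$ by Theorem \ref{.thm1}(g)), but that formula is Theorem \ref{th.t12}, the companion statement from the same citation \cite{r94}, whose derivation presupposes essentially the content of the present theorem; leaning on it gives a consistency check rather than an independent proof.
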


\begin{thm}\emph{\cite{r94}}\label{th.t12} The Hashiguchi  connection $ {D}^{*} $  is
given in terms of the Cartan connection \emph{(}{\it or the Berwald
connection}\emph{)} by\,\emph{:} \vspace{-0.2cm}
  \begin{eqnarray}\label{4}
   {D}^{*}_{X}\overline{Y} = \nabla _{X}\overline{Y}
+ {\widehat{P}}(\rho X,\overline{Y}) ={D}^{\circ}_{X}\overline{Y}
+{T}(K X, \overline{Y}) . \vspace{-0.2cm}
  \end{eqnarray}
In particular, we have
\begin{description}
  \item[(a)] $ {D}^{*}_{\gamma \overline{X}}\overline{Y}=\nabla _{\gamma
  \overline{X}}\overline{Y}={D}^{\circ}_{\gamma \overline{X}}\overline{Y}
+{T}( \overline{X}, \overline{Y})$.

 \item[(b)] $ {D}^{*}_{\beta \overline{X}}\overline{Y}=\nabla _{\beta
  \overline{X}}\overline{Y}+\widehat{P}(\overline{X},\overline{Y})
  ={D}^{\circ}_{\beta \overline{X}}\overline{Y}.$
\end{description}
\end{thm}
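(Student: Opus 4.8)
The plan is to write down an explicit candidate for $D^{*}$ and then identify it with the Hashiguchi connection via the uniqueness clause of Theorem~\ref{th.h2}. I would \emph{define} a map
$$D_{X}\overline{Y}\ :=\ \nabla_{X}\overline{Y}+\widehat{P}(\rho X,\overline{Y}),\qquad X\in\cpp,\ \overline{Y}\in\cp,$$
where $\nabla$ is the Cartan connection of $(M,L)$ and $\widehat{P}$ is its (v)hv-torsion, and show that $D=D^{*}$. Because $\rho$ is $\mathfrak{F}(\T M)$-linear, the expression $\widehat{P}(\rho\,\cdot,\cdot)$ is tensorial, while $\nabla$ already obeys the Leibniz rule in its second argument; hence it is routine that $D$ is a linear connection on $\pi^{-1}(TM)$.

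The second step is to read off the geometry of $D$. Its deflection map is $K^{D}(X)=D_{X}\overline{\eta}=\nabla_{X}\overline{\eta}+\widehat{P}(\rho X,\overline{\eta})$, and since $\widehat{P}(\overline{Z},\overline{\eta})=0$ by Theorem~\ref{.thm1}(d) together with the symmetry of $\widehat{P}$ (Theorem~\ref{.thm1}(f)), we get $K^{D}=K$, the Cartan deflection map. Hence $D$ has the same horizontal subbundle as $\nabla$, is in particular regular with $\beta^{D}=\beta$, and its associated nonlinear connection is the Barthel connection (Theorem~\ref{c.ba.}); in particular $h^{D}=\beta\circ\rho=h_{D^{\circ}}$. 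Using $\rho\circ\gamma=0$ one has $D_{\gamma\overline{X}}\overline{Y}=\nabla_{\gamma\overline{X}}\overline{Y}$, and using $K\circ\beta=0$ together with Theorem~\ref{th.5}(b) one has $D_{\beta\overline{X}}\overline{Y}=\nabla_{\beta\overline{X}}\overline{Y}+\widehat{P}(\overline{X},\overline{Y})=D^{\circ}_{\beta\overline{X}}\overline{Y}$, so $D$ agrees with the Berwald connection along horizontal directions.

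The core of the proof is to verify the five defining properties of Theorem~\ref{th.h2} for $D$. Property~(i) is immediate: $D_{\gamma\overline{X}}g=\nabla_{\gamma\overline{X}}g=0$ since $\nabla g=0$. Property~(v): since $h^{D}=h_{D^{\circ}}$ is the Barthel (conservative) horizontal projector, $D_{h^{D}X}L=(h^{D}X)\cdot L=0$ --- this is Theorem~\ref{bth2.h2}(i) for $D^{\circ}$. Property~(ii): from $\mathbf{T}^{D}(\gamma\overline{X},\beta\overline{Y})=D_{\gamma\overline{X}}\overline{Y}-\rho[\gamma\overline{X},\beta\overline{Y}]=\nabla_{\gamma\overline{X}}\overline{Y}-\rho[\gamma\overline{X},\beta\overline{Y}]$ one sees that the (h)hv-torsion of $D$ coincides with that of $\nabla$, i.e. with $T$, and the required symmetry is then Theorem~\ref{th.1}(iii) (this also re-derives $T^{*}=T$ of Theorem~\ref{h.ba.}). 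Property~(iii): $Q^{D}(\overline{X},\overline{Y})=Q^{\nabla}(\overline{X},\overline{Y})+\bigl(\widehat{P}(\overline{X},\overline{Y})-\widehat{P}(\overline{Y},\overline{X})\bigr)=0$ by Theorem~\ref{th.1}(ii) and the symmetry of $\widehat{P}$. The single genuinely computational point --- which I expect to be the main obstacle --- is property~(iv), $\widehat{P^{D}}=0$. Here one expands
$$\widehat{P^{D}}(\overline{X},\overline{Y})=-D_{\beta\overline{X}}D_{\gamma\overline{Y}}\overline{\eta}+D_{\gamma\overline{Y}}D_{\beta\overline{X}}\overline{\eta}+D_{[\beta\overline{X},\gamma\overline{Y}]}\overline{\eta},$$
substitutes $D_{\beta\overline{X}}\overline{\eta}=0$ and $D_{\gamma\overline{Y}}\overline{\eta}=\overline{Y}$, and evaluates the bracket term by Lemma~\ref{bracket}(b) applied to the Cartan connection (legitimate since $T(\overline{X},\overline{\eta})=0$), using $K^{D}=K$, $K\circ\gamma=id_{\cp}$ and $K\circ\beta=0$. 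One is left with $\widehat{P^{D}}(\overline{X},\overline{Y})=-D_{\beta\overline{X}}\overline{Y}+\widehat{P}(\overline{X},\overline{Y})+\nabla_{\beta\overline{X}}\overline{Y}=0$; the only difficulty is keeping the bracket bookkeeping straight.

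Once the five properties are checked, the uniqueness in Theorem~\ref{th.h2} yields $D=D^{*}$, which is the first displayed identity of the statement. The second identity, $\nabla_{X}\overline{Y}+\widehat{P}(\rho X,\overline{Y})=D^{\circ}_{X}\overline{Y}+T(KX,\overline{Y})$, is then simply Theorem~\ref{th.5} rearranged. Finally, the particular formulas (a) and (b) follow by substituting $X=\gamma\overline{X}$ (so $\rho X=0$, $KX=\overline{X}$) and $X=\beta\overline{X}$ (so $\rho X=\overline{X}$, $KX=0$), together with the corresponding particular cases of Theorem~\ref{th.5}.
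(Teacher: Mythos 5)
Your proposal is sound. Note first that the paper itself gives no proof of Theorem \ref{th.t12}: it is quoted from \cite{r94}, so there is nothing internal to compare against; judged on its own, your argument is correct and is the natural one — define $D_{X}\overline{Y}:=\nabla_{X}\overline{Y}+\widehat{P}(\rho X,\overline{Y})$, check the five axioms of Theorem \ref{th.h2}, and invoke its uniqueness clause. The individual steps do check out: $\widehat{P}(\overline{X},\overline{\eta})=0$ (from Theorem \ref{.thm1}(d) and (f)) gives $K^{D}=K$, hence regularity, $\beta^{D}=\beta$ and the Barthel nonlinear connection; $\rho\circ\gamma=0$ gives $D_{\gamma\overline{X}}=\nabla_{\gamma\overline{X}}$, whence axiom (i) and $T^{D}=T$, so (ii) follows from Theorem \ref{th.1}(iii); the symmetry of $\widehat{P}$ gives (iii); and your computation for (iv) is right, since $\widehat{P^{D}}(\overline{X},\overline{Y})=-D_{\beta\overline{X}}\overline{Y}+K[\beta\overline{X},\gamma\overline{Y}]$ and Lemma \ref{bracket}(b), legitimately applied to $\nabla$ because $T(\overline{X},\overline{\eta})=0$, yields $K[\beta\overline{X},\gamma\overline{Y}]=\nabla_{\beta\overline{X}}\overline{Y}+\widehat{P}(\overline{X},\overline{Y})$, so the two terms cancel against $D_{\beta\overline{X}}\overline{Y}$. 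The only points left implicit are harmless: the convention $D_{X}L=X\cdot L$ for the scalar $L$, which makes axiom (v) a property of the (shared) Barthel horizontal projector via Theorem \ref{bth2.h2}(i), and the second displayed equality, which is indeed just Theorem \ref{th.5} rearranged; the special cases (a), (b) then follow by the substitutions you indicate.
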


Concerning the metricity of Hashiguchi connection, we
have\vspace{-0.1cm}
\begin{lem}\label{le.4} For the Hashiguchi connection  $D^{*}$, we have\vspace{-0.1cm}
\begin{description}
 \item[(a)]$D^{*}_{\gamma \overline{X}}
g=0$,

 \item[(b)]$(D^{*}_{\beta
\overline{X}}
g)(\overline{Y},\overline{Z})=-2g(\widehat{P}(\overline{X},\overline{Y}),\overline{Z})$.
\end{description}
\end{lem}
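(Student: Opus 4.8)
The plan is to derive both identities directly from the explicit expression of the Hashiguchi connection $D^{*}$ in terms of the Cartan connection $\nabla$ and the Berwald connection $D^{\circ}$ furnished by Theorem~\ref{th.t12}, together with the already established metricity properties of $\nabla$ and $D^{\circ}$. The guiding observation is that, acting on $\pi$-vector fields, $D^{*}$ in a vertical ($\gamma$-) direction coincides with $\nabla$ in that direction, while $D^{*}$ in a horizontal ($\beta$-) direction coincides with $D^{\circ}$ in that direction; once this is noted, computing $D^{*}_{\gamma\overline{X}}g$ and $D^{*}_{\beta\overline{X}}g$ amounts to computing $\nabla g$ and $D^{\circ}g$ in the appropriate directions, and those have been dealt with.

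For (a), I would expand $(D^{*}_{\gamma\overline{X}}g)(\overline{Y},\overline{Z})=(\gamma\overline{X})\cdot g(\overline{Y},\overline{Z})-g(D^{*}_{\gamma\overline{X}}\overline{Y},\overline{Z})-g(\overline{Y},D^{*}_{\gamma\overline{X}}\overline{Z})$ and substitute $D^{*}_{\gamma\overline{X}}\overline{Y}=\nabla_{\gamma\overline{X}}\overline{Y}$, which is Theorem~\ref{th.t12}(a) (the $\widehat{P}$-term drops out because $\rho\circ\gamma=0$). The right-hand side becomes exactly $(\nabla_{\gamma\overline{X}}g)(\overline{Y},\overline{Z})$, which vanishes since the Cartan connection is metric by Theorem~\ref{th.1}(i).

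For (b), the shortest route uses Theorem~\ref{th.t12}(b), which asserts $D^{*}_{\beta\overline{X}}\overline{Y}=D^{\circ}_{\beta\overline{X}}\overline{Y}$ on $\pi$-vector fields; consequently $(D^{*}_{\beta\overline{X}}g)(\overline{Y},\overline{Z})=(D^{\circ}_{\beta\overline{X}}g)(\overline{Y},\overline{Z})$, and Lemma~\ref{le.3}(b) identifies the latter with $-2\widehat{P}(\overline{X},\overline{Y},\overline{Z})=-2g(\widehat{P}(\overline{X},\overline{Y}),\overline{Z})$. If one prefers to argue from the $\nabla$-form instead, expand $(D^{*}_{\beta\overline{X}}g)(\overline{Y},\overline{Z})$ using $D^{*}_{\beta\overline{X}}\overline{Y}=\nabla_{\beta\overline{X}}\overline{Y}+\widehat{P}(\overline{X},\overline{Y})$, absorb the $\nabla$-terms into $(\nabla_{\beta\overline{X}}g)(\overline{Y},\overline{Z})=0$, and combine the leftover terms $-g(\widehat{P}(\overline{X},\overline{Y}),\overline{Z})-g(\widehat{P}(\overline{X},\overline{Z}),\overline{Y})$ into $-2g(\widehat{P}(\overline{X},\overline{Y}),\overline{Z})$ using the symmetry of $g(\widehat{P}(\overline{X},\cdot),\cdot)$ in its last two arguments, which comes from $\widehat{P}(\overline{X},\overline{Y})=(\nabla_{\beta\overline{\eta}}T)(\overline{X},\overline{Y})$ (Theorem~\ref{.thm1}(e)) and Proposition~\ref{pp.3}(b).

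I expect no genuine obstacle: the result is essentially a transcription of the metricity of $\nabla$ and $D^{\circ}$ through the identities of Theorem~\ref{th.t12}. The only point demanding a little care is the bookkeeping --- checking that the $\widehat{P}$- and $T$-correction terms in Theorem~\ref{th.t12} genuinely vanish in the directions used --- and, should the direct route for (b) be taken, invoking the symmetry of $\widehat{P}$ in its \emph{latter} two slots rather than the different symmetry $\widehat{P}(\overline{X},\overline{Y})=\widehat{P}(\overline{Y},\overline{X})$ recorded in Theorem~\ref{.thm1}(f).
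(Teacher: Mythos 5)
Your proposal is correct, and it is exactly the intended derivation: the paper in fact states this lemma without proof, evidently regarding it as an immediate consequence of Theorem \ref{th.t12} together with $\nabla g=0$ (for the vertical part) and the Berwald metricity Lemma \ref{le.3}(b) (for the horizontal part), which is precisely what you do. Your alternative route for (b) via $D^{*}_{\beta\overline{X}}\overline{Y}=\nabla_{\beta\overline{X}}\overline{Y}+\widehat{P}(\overline{X},\overline{Y})$ is also sound, and you correctly identify that it needs the symmetry of $g(\widehat{P}(\overline{X},\cdot),\cdot)$ in its last two arguments, supplied by Theorem \ref{.thm1}(e) and Proposition \ref{pp.3}(b), rather than the symmetry of $\widehat{P}$ in its own two slots.
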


\begin{prop}\label{prop.Hs}The v-curvature $S^{*}$ of the Hashiguchi connection
 has the properties\,: \vspace{-0.2cm}
\begin{description}
 \item[(a)] $ S^{*}(\overline{X},\overline{Y},\overline{Z}, \overline{W})=-
 S^{*}(\overline{Y},\overline{X},\overline{Z},\overline{W})$,

 \item[(b)]  $ S^{*}(\overline{X},\overline{Y},\overline{Z}, \overline{W})=-
 S^{*}(\overline{X},\overline{Y},\overline{W},\overline{Z})$,

\item[(c)] $S^{*}(\overline{X}, \overline{Y} , \overline{Z},
\overline{W})=S(\overline{X}, \overline{Y} , \overline{Z},
\overline{W}) =g(T(\overline{X},\overline{W}) , T(\overline{Y},
\overline{Z}))- g(T(\overline{Y}, \overline{W}) ,T( \overline{X},
\overline{Z}))$,

   \item[(d)] $\mathfrak{S}_{\overline{X},\overline{Y},\overline{Z}}\,
 \{(D^{*}_{\gamma \overline{X}}S)(\overline{Y}, \overline{Z}, \overline{W})\}=0,$

\item[(e)] $(D^{*}_{\gamma \overline{\eta}}S)(\overline{X}, \overline{Y}, \overline{Z})=
-2S(\overline{X}, \overline{Y}) \overline{Z}$,

 \item[(f)] $(D^{*}_{\beta\overline{Z}}S)(\overline{X},\overline{Y},\overline{W}
)=(D^{*}_{\gamma
\overline{X}}P^{*})(\overline{Z},\overline{Y},\overline{W})-
(D^{*}_{\gamma \overline{Y}}P^{*})(\overline{Z},\overline{X},
\overline{W})
$\\
${\qquad\qquad\qquad\qquad}
-P^{*}(T(\overline{Y},\overline{Z}),\overline{X})\overline{W}+
P^{*}(T(\overline{X},\overline{Z}),\overline{Y})\overline{W}$.
\end{description}
\end{prop}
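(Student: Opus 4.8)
The cornerstone is Theorem \ref{th.t12}(a): the Hashiguchi connection coincides with the Cartan connection along every vertical direction, $D^{*}_{\gamma\overline{X}}\overline{Y}=\nabla_{\gamma\overline{X}}\overline{Y}$, the correction term $\widehat{P}(\rho X,\overline{Y})$ of Theorem \ref{th.t12} dropping out because $\rho\circ\gamma=0$. The plan is first to upgrade this to the identity $S^{*}=S$, where $S$ is the v-curvature of the Cartan connection. Since $D^{*}$ is regular with $T^{*}=T$ (Theorem \ref{h.ba.}), hence $T^{*}(\overline{X},\overline{\eta})=0$ by Proposition \ref{pp.3}(c), Lemma \ref{bracket}(c) applies to $D^{*}$ and exhibits $[\gamma\overline{X},\gamma\overline{Y}]$ as an element of $\gamma(\cp)$, i.e. as a vertical vector field; consequently $D^{*}$ and $\nabla$ agree along $[\gamma\overline{X},\gamma\overline{Y}]$ as well. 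Substituting into $S^{*}(\overline{X},\overline{Y})\overline{Z}=-D^{*}_{\gamma\overline{X}}D^{*}_{\gamma\overline{Y}}\overline{Z}+D^{*}_{\gamma\overline{Y}}D^{*}_{\gamma\overline{X}}\overline{Z}+D^{*}_{[\gamma\overline{X},\gamma\overline{Y}]}\overline{Z}$, each of the three terms equals the corresponding term for the Cartan connection, whence $S^{*}=S$; in particular $\widehat{S^{*}}(\overline{X},\overline{Y})=S(\overline{X},\overline{Y})\overline{\eta}=0$ by Theorem \ref{th..2}(f). This identification is the only step with genuine content.

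Granting $S^{*}=S$ and $\widehat{S^{*}}=0$, the remaining items are specializations of \S 2--3. Parts (a) and (b) are Theorem \ref{th..2}(a) and (b) restated, and part (c) combines $S^{*}=S$ with Theorem \ref{th..2}(d); the metric-contracted forms transfer verbatim since the $g$ in (\ref{cur.g}) is the Finsler metric, independent of any connection. For (d) one notes that $D^{*}_{\gamma\overline{X}}$ acts as $\nabla_{\gamma\overline{X}}$ on all tensor fields, so $(D^{*}_{\gamma\overline{X}}S)(\overline{Y},\overline{Z},\overline{W})=(\nabla_{\gamma\overline{X}}S)(\overline{Y},\overline{Z},\overline{W})$ and the cyclic sum vanishes by Theorem \ref{th..2}(g) (alternatively, apply Proposition \ref{pp.2}(a) to $D^{*}$ and use $\widehat{S^{*}}=0$). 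Part (e) is the same remark: $(D^{*}_{\gamma\overline{\eta}}S)(\overline{X},\overline{Y})\overline{Z}=(\nabla_{\gamma\overline{\eta}}S)(\overline{X},\overline{Y})\overline{Z}=-2S(\overline{X},\overline{Y})\overline{Z}$ by Theorem \ref{th..2}(i).

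Finally, (f) is Proposition \ref{pp.2}(b) applied to $D^{*}$, which is permitted since $D^{*}$ is regular with $T^{*}(\overline{X},\overline{\eta})=0$: in that identity the two summands carrying $\widehat{S^{*}}$ and $\widehat{P^{*}}$ drop out because $\widehat{S^{*}}=0$ and $\widehat{P^{*}}=0$ (Theorem \ref{th.h2}(iv)), and substituting $T^{*}=T$ and $S^{*}=S$ and reordering the last two terms gives the stated formula (the hv-curvature appearing there is $P^{*}$, not the Cartan $P$). The one point requiring care is that $D^{*}$ is only \emph{vertically} metric (Theorem \ref{th.h2}(i)), so identities relying on $Dg=0$, such as Lemma \ref{lem.2}(c), may be used only in vertical directions; for the v-curvature this restriction is automatic, which is why --- unlike the analogous proofs for $R$ and $P$ --- this proposition is essentially immediate once $S^{*}=S$ is established. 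That clean derivation of $S^{*}=S$ is therefore the main, and only real, obstacle.
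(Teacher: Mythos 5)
Your proposal is correct and follows essentially the paper's own route: the central fact $S^{*}=S$ (obtained, as in the paper's proof of item (c), from $D^{*}_{\gamma\overline{X}}\overline{Y}=\nabla_{\gamma\overline{X}}\overline{Y}$ together with the integrability of the vertical distribution) combined with Proposition \ref{pp.2}(a),(b) applied to $D^{*}$ using $T^{*}=T$, $\widehat{S^{*}}=0$ and $\widehat{P^{*}}=0$. The only cosmetic differences are that you deduce (b) from $S^{*}=S$ and Theorem \ref{th..2}(b) instead of the paper's direct use of Lemma \ref{lem.2}(c) in vertical directions with $D^{*}_{\gamma\overline{X}}g=0$, and you reduce (d),(e) to Theorem \ref{th..2}(g),(i) via the equality of vertical covariant derivatives rather than obtaining (e) from (d) by setting $\overline{X}=\overline{\eta}$ --- all of which are valid.
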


\begin{proof}~\par

\vspace{4pt}
 \noindent  \textbf{(b)}  Follows from Lemma \ref{lem.2}\textbf{(c)} by setting
$X=\gamma\overline{X}, Y=\gamma\overline{Y}$, taking into account
the fact that $D^{*}_{\gamma \overline{X}} g=0$ and making use of
Lemma \ref{bracket}(c) .

\vspace{4pt}
 \noindent  \textbf{(c)} Follows from the identity
$D^{*}_{\gamma \overline{X}}\overline{Y}=\nabla_{\gamma
\overline{X}}\overline{Y} $ (Theorem \ref{th.t12}), noting that the
vertical distribution is completely integrable.

\vspace{4pt}
 \noindent  \textbf{(d)} Since $D^*$ is regular with $T^*(\overline{X},\overline{\eta})=T(\overline{X},\overline{\eta})=0$, then
 from Proposition \ref{pp.2}(a) and the fact that $S^* =S$, the result follows.

\vspace{4pt}
 \noindent  \textbf{(e)}  Follows from (d) by setting $\overline{X}=\overline{\eta}$, taking into
account the property that
$S(\overline{X},\overline{\eta})\overline{Y}=
S(\overline{\eta},\overline{X})\overline{Y}=\widehat{S}(\overline{X},\overline{Y})=0$.

\vspace{4pt}
 \noindent  \textbf{(f)} Follows from Proposition \ref{pp.2}(b), noting that $S^* =S$ and
 $\widehat{P^{*}}=0$.
\end{proof}

\begin{thm} The hv-curvature  tensor $P^{*}$ of the Hashiguchi connection  has
the properties\,:\vspace{-0.2cm}
\begin{description}
 \item[(a)] $P^{*}(\overline{X},\overline{Y},\overline{Z},\overline{W})
 +P^{*}(\overline{X},\overline{Y},\overline{W},\overline{Z})=2
 (D^{*}_{\gamma \overline{Y}}\widehat{P})(\overline{X},\overline{Z},\overline{W})
 +2\widehat{P}(T(\overline{X},\overline{Y}),\overline{Z},\overline{W})$\,,

 \item[(b)] $ \widehat{P^{*}}=0,$
 \item[(c)] $P^{*}(\overline{X},\overline{Y})\overline{Z}-P^{*}(\overline{Z},\overline{Y})\overline{X}=
(D^{*}_{\beta
\overline{Z}}T)(\overline{Y},\overline{X})-(D^{*}_{\beta
\overline{X}}T)(\overline{Y},\overline{Z})$,

\item[(d)]$P^{*}(\overline{X},\overline{Y})\overline{Z}=P^{*}(\overline{X},\overline{Z})\overline{Y}$,

 \item[(e)] $\widehat{P^{*}}(\overline{\eta}, \overline{X}) \overline{Y}=
  -(D^{*}_{\beta
  \overline{\eta}}T)(\overline{X},\overline{Y}),\quad$ $\widehat{P^{*}}(\overline{X},\overline{\eta})
  \overline{Y}=0$,

\item[(f)] $(D^{*}_{\gamma \overline{\eta}}P^{*})(\overline{X}, \overline{Y}, \overline{Z})=
-P^{*}(\overline{X}, \overline{Y}) \overline{Z}$.

\end{description}
\end{thm}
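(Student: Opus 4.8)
The strategy is to deduce each assertion from the general identities of $\S 2$ — Lemmas \ref{lem.2} and \ref{bracket}, Propositions \ref{pp.1} and \ref{pp.2} — specialised to $D^{*}$, using throughout the facts already recorded for the Hashiguchi connection: the axioms $Q^{*}=0$ and $\widehat{P^{*}}=0$ (Theorem \ref{th.h2}); the coincidences $\beta^{*}=\beta$, $K^{*}=K$ and $T^{*}=T$ (Theorem \ref{h.ba.}); the equality $S^{*}=S$, whence also $\widehat{S^{*}}=\widehat{S}=0$ (Proposition \ref{prop.Hs}); the metricity Lemma \ref{le.4}, that is $D^{*}_{\gamma \overline{X}}g=0$ and $(D^{*}_{\beta \overline{X}}g)(\overline{Y},\overline{Z})=-2g(\widehat{P}(\overline{X},\overline{Y}),\overline{Z})$; and the standard identities $K\circ\gamma=\mathrm{id}$, $K\circ\beta=0$, $T(\overline{X},\overline{\eta})=0$ and $S(\overline{\eta},\overline{X})\overline{Y}=0$. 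Assertion (b) then needs nothing at all: it is axiom (iv) of Theorem \ref{th.h2}, written out through $\widehat{P^{*}}(\overline{X},\overline{Y})=P^{*}(\overline{X},\overline{Y})\overline{\eta}$.

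For (a) I would argue exactly as in the proofs of Theorem \ref{th}(b) and Theorem \ref{.thm}(a): apply the metricity identity Lemma \ref{lem.2}(c) with $X=\beta \overline{X}$ and $Y=\gamma \overline{Y}$. Because $D^{*}$ is vertically metric, every term of the right-hand side carrying a $D^{*}_{\gamma}g$ vanishes, while Lemma \ref{le.4} rewrites the surviving $D^{*}_{\beta \overline{X}}g$ in terms of $\widehat{P}$. It remains to treat $D^{*}_{[\beta \overline{X},\gamma \overline{Y}]}g$: expand the bracket by Lemma \ref{bracket}(b), in which the $\widehat{P^{*}}$-term drops out and the mixed torsion is $T^{*}=T$, then apply Lemma \ref{le.4} once more to the $\beta$-part of the bracket (the $\gamma$-part contributes nothing). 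Collecting everything and using the symmetry of $T$ (Proposition \ref{pp.3}(f)) recasts the result as $2(D^{*}_{\gamma \overline{Y}}\widehat{P})(\overline{X},\overline{Z},\overline{W})+2\widehat{P}(T(\overline{X},\overline{Y}),\overline{Z},\overline{W})$. This is the only step involving real bookkeeping, and the one point requiring care is keeping track of which of the many terms vanish.

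Assertion (c) is immediate from Proposition \ref{pp.1}(b): on its right-hand side every term built from $Q$ or from $\widehat{P}$ of the connection disappears because $Q^{*}=0$ and $\widehat{P^{*}}=0$, and the mixed torsion occurring there is $T^{*}=T$, leaving precisely the stated formula. For (d) I would start from Proposition \ref{prop.Hs}(f) (which is Proposition \ref{pp.2}(b) after inserting $S^{*}=S$ and $\widehat{P^{*}}=0$) and set $\overline{W}=\overline{\eta}$. Using $\widehat{S^{*}}=0$ one gets $(D^{*}_{\beta \overline{Z}}S)(\overline{X},\overline{Y})\overline{\eta}=0$; using $D^{*}_{\gamma \overline{X}}\overline{\eta}=\overline{X}$ together with $\widehat{P^{*}}=0$ one gets $(D^{*}_{\gamma \overline{X}}P^{*})(\overline{Z},\overline{Y})\overline{\eta}=-P^{*}(\overline{Z},\overline{Y})\overline{X}$; and the two torsion terms collapse since $P^{*}(\cdot,\cdot)\overline{\eta}=\widehat{P^{*}}=0$. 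What survives is exactly $P^{*}(\overline{X},\overline{Y})\overline{Z}=P^{*}(\overline{X},\overline{Z})\overline{Y}$.

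Finally, (e) and (f) are specialisations of the foregoing. Setting $\overline{X}=\overline{\eta}$ in (c) and using $\widehat{P^{*}}=0$ together with $(D^{*}_{\beta \overline{Z}}T)(\overline{Y},\overline{\eta})=0$ — valid since $T(\cdot,\overline{\eta})=0$ and $D^{*}_{\beta \overline{Z}}\overline{\eta}=0$ — yields the first formula of (e), while putting $\overline{Y}=\overline{\eta}$ in (d) yields the second (here the symbol $\widehat{P^{*}}$ in the statement is to be read as $P^{*}$). For (f), set $\overline{X}=\overline{\eta}$ in Proposition \ref{prop.Hs}(f): the term $(D^{*}_{\beta \overline{Z}}S)(\overline{\eta},\overline{Y},\overline{W})$ vanishes since $S(\overline{\eta},\cdot)\cdot=0$ (Theorem \ref{th..2}(f)) and $D^{*}_{\beta \overline{Z}}\overline{\eta}=0$; the torsion terms vanish since $T(\overline{\eta},\cdot)=0$ and $P^{*}(\cdot,\overline{\eta})\cdot=0$ by (e); and $(D^{*}_{\gamma \overline{Y}}P^{*})(\overline{Z},\overline{\eta},\overline{W})=-P^{*}(\overline{Z},\overline{Y})\overline{W}$ by the same computation as in (d). Solving for the remaining term gives $(D^{*}_{\gamma \overline{\eta}}P^{*})(\overline{X},\overline{Y},\overline{Z})=-P^{*}(\overline{X},\overline{Y})\overline{Z}$, which is (f).
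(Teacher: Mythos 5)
Your proposal is correct and follows essentially the same route as the paper: (a) via Lemma \ref{lem.2}(c) with $X=\beta\overline{X}$, $Y=\gamma\overline{Y}$ plus Lemmas \ref{le.4} and \ref{bracket}, (b) from axiom (iv) of Theorem \ref{th.h2}, (c) from Proposition \ref{pp.1}(b), and (d)--(f) by specialising Proposition \ref{prop.Hs}(f) and item (c) at $\overline{\eta}$, exactly as in the paper. Your reading of the misprinted $\widehat{P^{*}}$ in (e) as $P^{*}$, and your citation of Lemma \ref{bracket}(b) for the mixed bracket in (a), are both consistent with what the paper's proof actually uses.
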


\begin{proof}~\par

\vspace{4pt}
 \noindent  \textbf{(a)}  Follows from Lemma \ref{lem.2}\textbf{(c)} by setting
$X=\beta\overline{X}, Y=\gamma\overline{Y}$, using Lemma \ref{le.4}
and Lemma \ref{bracket}(c).
 In fact,
   \begin{eqnarray*}
            P^{*}(\overline{X},\overline{Y},\overline{Z},\overline{W})
 +P^{*}(\overline{X},\overline{Y},\overline{W},\overline{Z})
  &=&-\gamma\overline{Y}\cdot(D^*_{\beta \overline{X}} g)(\overline{Z},\overline{W})
 +(D^*_{\beta \overline{X}} g)(D^{*}_{\gamma \overline{Y}}\overline{Z},\overline{W})\\
 &&+(D^*_{\beta \overline{X}} g)(\overline{Z},D^{*}_{\gamma \overline{Y}}\overline{W})
 -(D^{*}_{[\beta \overline{X},\gamma \overline{Y}]}g)(\overline{Z},\overline{W}) \\
           &=& -\gamma\overline{Y}\cdot(-2 g(\widehat{P}(\overline{X},\overline{Z}),\overline{W}))
 -2 g(\widehat{P}(\overline{X},D^{*}_{\gamma \overline{Y}}\overline{Z}),\overline{W})\\
 &&-2 g(\widehat{P}(\overline{X},\overline{Z}),D^{*}_{\gamma \overline{Y}}\overline{W})
 -2 g(\widehat{P}(D^*_{\gamma \overline{Y}}\overline{X},\overline{Z}),\overline{W})\\
 &&+2 g(\widehat{P}(T(\overline{X},\overline{Y}),\overline{Z}),\overline{W})\\
           &=&2 (D^{*}_{\gamma \overline{Y}}\widehat{P})(\overline{X},\overline{Z},\overline{W})
           +2\widehat{P}(T(\overline{X},\overline{Y}),\overline{Z},\overline{W}).
          \end{eqnarray*}

\vspace{4pt}
 \noindent  \textbf{(b)}  Follows from Theorem \ref{th.h2}(d).

\vspace{4pt}
 \noindent  \textbf{(c)}  Follows from Proposition \ref{pp.1}(b), taking into
account the fact that $T^*=T$ and  $\widehat{P^{*}}=Q^*=0$.

\vspace{4pt}
 \noindent  \textbf{(d)}  Follows from Proposition \ref{prop.Hs}(f)
 by setting $\overline{W}=\overline{\eta}$, noting that
 $\widehat{S}=\widehat{P^{*}}=0$ and  $K\circ \beta=0$.

\vspace{4pt}
 \noindent  \textbf{(e)}  The first relation follows from (c) by setting $\overline{X}=\overline{\eta}$,
using  the obtained properties of the (h)hv-torsion $T$. On the
other hand, the relation $P^{*}(\overline{X},\overline{\eta})
  \overline{Y}=0$ follows from (d) by setting $\overline{Y}=\overline{\eta}$, making use of
  (b).

\vspace{4pt}
 \noindent  \textbf{(f)}   Follows from Proposition \ref{prop.Hs}(f) by setting
$\overline{X}=\overline{\eta}$, using (e)  and the obtained
properties of the v-curvature $S$.
\end{proof}


\begin{thm} The h-curvature  tensor $R^{*}$ has
the properties\,:\vspace{-0.2cm}
\begin{description}
 \item[(a)] $ R^{*}(\overline{X},\overline{Y},\overline{Z}, \overline{W})
 =-R^{*}(\overline{Y},\overline{X},\overline{Z},\overline{W})$,

 \item[(b)]  $ R^{*}(\overline{X},\overline{Y},\overline{Z}, \overline{W})
 +R^{*}(\overline{X},\overline{Y},\overline{W},\overline{Z})=2
 \mathfrak{U}_{\overline{X},\overline{Y}}\{(D^{*}_{\beta \overline{Y}}\widehat{P})(\overline{X},\overline{Z},\overline{W})\}$,

  \item[(c)] $ \widehat{R}^{*}(\overline{X}, \overline{Y})= \widehat{R}(\overline{X},\overline{Y})=
  - K\mathfrak{R}(\beta \overline{X},\beta \overline{Y})$,

\item[(d)] $\mathfrak{S}_{\overline{X},\overline{Y},\overline{Z}}\,
\{R^{*}(\overline{X},
\overline{Y})\overline{Z}-T(\widehat{R}(\overline{X},\overline{Y}),\overline{Z})\}=0,$

\item[(e)] $\mathfrak{S}_{\overline{X},\overline{Y},\overline{Z}}\,
\{(D^{*}_{\beta \overline{X}}R^{*})(\overline{Y},
\overline{Z},\overline{W})+P^{*}(\overline{X},\widehat{R}(\overline{Y},\overline{Z}))\overline{W}\}=0$,

\item[(f)] $(D^*_{\gamma\overline{X}}R^{*})(\overline{Y},\overline{Z},\overline{W})
   + (D^{*}_{\beta\overline{Y}}P^{*})(\overline{Z},\overline{X},\overline{W})-
   (D^{*}_{\beta
   \overline{Z}}P^{*})(\overline{Y},\overline{X},\overline{W})$\\
$+R^{*}(T(\overline{X},\overline{Y}),\overline{Z})\overline{W}-
S(\widehat{R}(\overline{Y},\overline{Z}),\overline{X})\overline{W}
-R^{*}(T(\overline{X},\overline{Z}),\overline{Y})\overline{W}=0,$

\item[(g)] $(D^{*}_{\gamma \overline{\eta}}R^{*})(\overline{X}, \overline{Y},
\overline{Z})=0$.
\end{description}
\end{thm}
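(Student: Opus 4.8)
The plan is to run the same machinery that handled the h-curvature tensors of the Cartan, Berwald and Chern connections (Theorems \ref{th.Rc}, \ref{th1} and the analogous theorem of \S 5), feeding in the structural features of the Hashiguchi connection: $Q^{*}=0$ and $\widehat{P^{*}}=0$ (Theorem \ref{th.h2}), $T^{*}=T$ together with $\Gamma_{D^{*}}=[J,G]$, so that $\beta^{*}=\beta$ and $K^{*}=K$ (Theorem \ref{h.ba.}), and the half-metricity $D^{*}_{\gamma\overline{X}}g=0$, $(D^{*}_{\beta\overline{X}}g)(\overline{Y},\overline{Z})=-2g(\widehat{P}(\overline{X},\overline{Y}),\overline{Z})$ (Lemma \ref{le.4}). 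Observe that Lemmas \ref{bracket}, \ref{lem.2} and Propositions \ref{pp.1}, \ref{pp.2} all apply to $D^{*}$, since $T^{*}(\overline{X},\overline{\eta})=T(\overline{X},\overline{\eta})=0$ by Proposition \ref{pp.3}(c).

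Property \textbf{(a)} is immediate: $\textbf{K}^{*}(\beta\overline{X},\beta\overline{Y})$ is skew in $\beta\overline{X},\beta\overline{Y}$ by the definition of the classical curvature. For \textbf{(b)} I would apply Lemma \ref{lem.2}(c) with $X=\beta\overline{X}$, $Y=\beta\overline{Y}$; on the right-hand side I rewrite every $(D^{*}_{\beta\,\cdot}g)(\cdots)$ via Lemma \ref{le.4}, and split the bracket term by Lemma \ref{bracket}(a) as $[\beta\overline{X},\beta\overline{Y}]=\gamma\widehat{R}(\overline{X},\overline{Y})+\beta(D^{*}_{\beta\overline{X}}\overline{Y}-D^{*}_{\beta\overline{Y}}\overline{X})$ (here $Q^{*}=0$ is used). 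The $\gamma\widehat{R}(\overline{X},\overline{Y})$-part of the bracket contributes nothing because $D^{*}_{\gamma\,\cdot}g=0$ — this is precisely why the term $-2T(\widehat{R}(\overline{X},\overline{Y}),\overline{Z},\overline{W})$ present in the Berwald identity (Theorem \ref{th1}(c)) is absent here — and regrouping the surviving terms exactly as in the proof of Theorem \ref{th1}(c) yields $2\mathfrak{U}_{\overline{X},\overline{Y}}\{(D^{*}_{\beta\overline{Y}}\widehat{P})(\overline{X},\overline{Z},\overline{W})\}$. For \textbf{(c)} I would combine the identity $\widehat{R^{*}}=\widehat{R}$ (cf.\ \cite{r94}) with Theorem \ref{th.Rc}(c); alternatively one reproduces the short computation in the proof of Theorem \ref{th.Rc}(c) verbatim, which is legitimate since it uses only Lemma \ref{bracket}(a), $K\circ\gamma=\mathrm{id}$, $K\circ\beta=0$, $\mathfrak{R}(X,Y)=-v[hX,hY]$ \cite{r97} and $h_{D^{*}}=h_{\Gamma}$ — all valid for $D^{*}$.

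Parts \textbf{(d)}, \textbf{(e)}, \textbf{(f)} come from specializing Propositions \ref{pp.1}(c), \ref{pp.2}(d) and \ref{pp.2}(c) respectively to $D=D^{*}$: in each the terms containing $Q^{*}$ drop out since $Q^{*}=0$; in (f) the two terms $P^{*}(\,\cdot\,,\widehat{P^{*}}(\,\cdot\,,\,\cdot\,))\overline{W}$ also drop out since $\widehat{P^{*}}=0$; then one substitutes $T^{*}=T$, $\widehat{R^{*}}=\widehat{R}$ and $S^{*}=S$ (Proposition \ref{prop.Hs}(c)) and rearranges to the stated forms. Finally \textbf{(g)} follows from (f) by setting $\overline{X}=\overline{\eta}$: the terms $R^{*}(T(\overline{\eta},\,\cdot\,),\,\cdot\,)\overline{W}$ vanish because $T(\overline{\eta},\,\cdot\,)=T(\,\cdot\,,\overline{\eta})=0$ (symmetry of $T$ and Proposition \ref{pp.3}(c)); $S(\widehat{R}(\overline{Y},\overline{Z}),\overline{\eta})\overline{W}=0$ by Theorem \ref{th..2}(f); and each $(D^{*}_{\beta\,\cdot}P^{*})(\,\cdot\,,\overline{\eta},\,\cdot\,)$ vanishes because $P^{*}(\,\cdot\,,\overline{\eta})\,\cdot=0$ (the preceding theorem on $P^{*}$) and $K\circ\beta=0$ forces $D^{*}_{\beta\,\cdot}\overline{\eta}=0$ — exactly as in the proofs of Theorem \ref{th.Rc}(g) and Theorem \ref{th1}(g) — leaving $(D^{*}_{\gamma\overline{\eta}}R^{*})(\overline{X},\overline{Y},\overline{Z})=0$.

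The one genuinely computational point is \textbf{(b)}: one must carry the Utiyama cyclic sum $\mathfrak{U}_{\overline{X},\overline{Y}}$ through the full expansion of $(D^{*}_{\beta\overline{X}}g)$, $(D^{*}_{\beta\overline{Y}}g)$ and of the bracket term, and verify that all the ``contact'' terms involving $D^{*}_{\beta\,\cdot}\overline{Z}$, $D^{*}_{\beta\,\cdot}\overline{W}$, $D^{*}_{\beta\,\cdot}\overline{X}$, $D^{*}_{\beta\,\cdot}\overline{Y}$ reassemble precisely into $(D^{*}_{\beta\overline{Y}}\widehat{P})(\overline{X},\overline{Z},\overline{W})$. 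This is the same reckoning already performed for the Berwald connection, the only difference being the welcome disappearance of the $\gamma$-bracket contribution thanks to $D^{*}_{\gamma\overline{X}}g=0$; everything else is a direct substitution into the identities of \S 2.
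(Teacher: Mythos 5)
Your proposal is correct and follows essentially the same route as the paper: (a) from skew-symmetry of the classical curvature, (b) from Lemma \ref{lem.2}(c) with $X=\beta\overline{X}$, $Y=\beta\overline{Y}$ combined with Lemma \ref{le.4} and the bracket lemma (the $\gamma\widehat{R}$-part indeed dying because $D^{*}_{\gamma\overline{X}}g=0$), (c) from $\widehat{R^{*}}=\widehat{R}$ and Theorem \ref{th.Rc}(c), (d)--(f) by specializing Propositions \ref{pp.1}(c), \ref{pp.2}(d), \ref{pp.2}(c) with $Q^{*}=\widehat{P^{*}}=0$, $T^{*}=T$, $S^{*}=S$, and (g) from (f) at $\overline{X}=\overline{\eta}$. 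Your justification of the applicability of the \S 2 identities via $T^{*}(\overline{X},\overline{\eta})=0$ and the detailed vanishing arguments in (g) are exactly the points the paper uses (if more tersely stated there).
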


\begin{proof}~\par

\vspace{4pt}
 \noindent\textbf{(b)}  Follows from Lemma \ref{lem.2}\textbf{(c)} by setting
$X=\beta\overline{X}, Y=\beta\overline{Y}$, taking  Lemma \ref{le.4}
and Lemma \ref{bracket}(c) into account. In fact,
\begin{eqnarray*}
            R^{*}((\overline{X},\overline{Y},\overline{Z},\overline{W})
 +R^{*}(\overline{X},\overline{Y},\overline{W},\overline{Z})
 &=&\mathfrak{U}_{\overline{X},\overline{Y}}\{\beta \overline{X}\cdot(D^*_{\beta \overline{Y}} g)(\overline{Z},\overline{W})
 -(D^*_{\beta \overline{Y}} g)(D^{*}_{\beta \overline{X}}\overline{Z},\overline{W})\\
 &&-(D^*_{\beta \overline{Y}} g)(\overline{Z},D^{*}_{\beta \overline{X}}\overline{W})\}
 -(D^{*}_{[\beta \overline{X},\beta \overline{Y}]}g)(\overline{Z},\overline{W}) \\
           &=& \mathfrak{U}_{\overline{X},\overline{Y}}\{\beta \overline{X}\cdot(-2 g(\widehat{P}
           (\overline{Y},\overline{Z}),\overline{W}))
            +2 g(\widehat{P}(\overline{Y},D^{*}_{\beta \overline{X}}\overline{Z}),\overline{W})\\
 &&+2 g(\widehat{P}(\overline{Y},\overline{Z}),D^{*}_{\beta \overline{X}}\overline{W})\}
 +2 g(\widehat{P}(D^*_{\beta \overline{X}}\overline{Y},\overline{Z}),\overline{W}) \\
 &&-2 g(\widehat{P}(D^*_{\beta \overline{Y}}\overline{X},\overline{Z}),\overline{W}) \\
           &=&2\mathfrak{U}_{\overline{X},\overline{Y}}\{(D^{*}_{\beta \overline{Y}}P)(\overline{X},\overline{Z},\overline{W})\}.
          \end{eqnarray*}

\vspace{4pt}
 \noindent\textbf{(c)}  Follows from Theorem \ref{th.Rc}(c), taking into account that $\widehat{R^{*}}=\widehat{R}$ \cite{r94}.

\vspace{4pt}
 \noindent\textbf{(d)}  Follows from Proposition \ref{pp.1}(c), noting that $Q^{*}=0$, $\widehat{R^{*}}=\widehat{R}$ and
$T^*=T$ .

\vspace{4pt}
 \noindent\textbf{(e)}    Follows from Proposition \ref{pp.2}(d) together with $Q^*=0$ and
$\widehat{R^{*}}=\widehat{R}$.

\vspace{4pt}
 \noindent\textbf{(f)}  Follows from Proposition \ref{pp.2}(c), making use of the relations $S^*=S$,
$T^*=T$, $\widehat{R^{*}}=\widehat{R}$ and $\widehat{P^{*}}=Q^*=0$.

\vspace{4pt}
 \noindent\textbf{(g)}   Follows from (f) by setting
$\overline{X}=\overline{\eta}$ and using the obtained properties of
the (h)hv-torsion  $T$, the v-curvature $S$ and the hv-curvature
$P^{*}$.
\end{proof}

\vspace{9pt}
\newpage
\begin{center}
{\bf{\Large{ Appendix. Intrinsic Comparison}}}
\end{center}
\par The following tables establish a concise comparison concerning the
canonical linear connections in Finsler geometry as well as the
fundamental geometric objects associated with them.
 \begin{center}{\bf{Table 1.}}
\end{center}
 {\tiny
\begin{center}
\begin{tabular}
{|c|c|c|c|c|}\hline
&&&&\\
 {\bf connection} &{\bf Cartan:\,$\nabla$ }& {\bf Chern:\,$D^{\diamond}$ } &{\bf Hashiguchi:\,$D^{*}$ }
 &{\bf Berwald:\,$D^{\circ}$}
\\[0.1 cm]\hline
&&&&\\
{\bf v-counterpart} & $\nabla _{\gamma\overline{X}}\overline{Y}$&
 $D^{\diamond} _{\gamma\overline{X}}
      \overline{Y}=\nabla _{\gamma\overline{X}}
      \overline{Y}-T(\overline{X},\overline{Y})$&
${D}^{*} _{\gamma\overline{X}}
      \overline{Y}=\nabla _{\gamma\overline{X}}
      \overline{Y}$&
${D}^{\circ} _{\gamma\overline{X}}
      \overline{Y}=\nabla _{\gamma\overline{X}}
      \overline{Y}-T(\overline{X},\overline{Y})$
\\[0.1 cm]
{\bf h-counterpart} & $\nabla _{\beta\overline{X}}\overline{Y}$&
$D^{\diamond} _{\beta\overline{X}}
      \overline{Y}=\nabla _{\beta\overline{X}}
      \overline{Y}$&
${D}^{*} _{\beta\overline{X}}
      \overline{Y}= \nabla _{\beta\overline{X}}
      \overline{Y}+\widehat{P}(\overline{X},\overline{Y})$&
      ${D}^{\circ} _{\beta\overline{X}}
      \overline{Y}=\nabla _{\beta\overline{X}}
      \overline{Y}+\widehat{P}(\overline{X},\overline{Y})$

\\[0.1 cm]\hline
&&&&\\
{\bf (h)v-torsion} & $0$& $0$& $0$&$0$
\\[0.1 cm]
{\bf (h)hv-torsion} & $T$& $0$& $T$&$0$

\\[0.1 cm]
{\bf (h)h-torsion} & $0$& $0$&$0$&$0$
\\[0.1 cm]\hline
&&&&\\
{\bf (v)v-torsion} & $0$& $0$& $0$&$0$
\\[0.1 cm]
{\bf (v)hv-torsion}& $\widehat{P}=\nabla_{G}T$&
$\widehat{P}$&$0$&$0$
\\[0.1 cm]
{\bf (v)h-torsion} & ${\widehat{R}}=-K\mathfrak{R}$&
${\widehat{R}}$& ${\widehat{R}}$&${\widehat{R}}$
\\[0.1 cm]\hline
&&&&\\
{\bf v-curvature} & ${S}$& $0$& $S$&$0$
\\[0.1 cm]
{\bf hv-curvature} & ${P}$& ${P^{\diamond}}$&
${P^{*}}$&${P^{\circ}}$
\\[0.1 cm]
{\bf h-curvature} & ${R}$& $R^{\diamond}$& ${R^{*}}$&${R^{\circ}}$
\\[0.1 cm]\hline
&&&&\\
{\bf v-metricity}& $\nabla _{\gamma\overline{X}}{g}=0$&
$D^{\diamond} _{\gamma\overline{X}}{g}=2g(T(\overline{X},.),.)$&
$D^{*} _{\gamma\overline{X}}{g}=0$&$D^{\circ}
_{\gamma\overline{X}}{g}=2g(T(\overline{X},.),.)$
\\[0.1 cm]
{\bf h-metricity}& $\nabla _{\beta\overline{X}}{g}=0$& $D^{\diamond}
_{\beta\overline{X}}{g}=0$& $D^{*}
_{\beta\overline{X}}{g}=-2g(\widehat{P}(\overline{X},.),.)$&$D^{\circ}
_{\beta\overline{X}}{g}=-2g(\widehat{P}(\overline{X},.),.)$
\\[0.1 cm]\hline
\end{tabular}
\end{center}
}

\begin{center}{\bf{Table 2.}}
\\[0.2cm]
\begin{tabular}
{|c|c|}\hline
&\\
{\bf connection}&\bf{curvature tensors}
\\[0.1 cm]\hline
&\\
{\bf Cartan }&  v-curvature\,:\,
$S(\overline{X},\overline{Y})\overline{Z}:=-\nabla_{\gamma
\overline{X}} \nabla_{\gamma\overline{Y}}
\overline{Z}+\nabla_{\gamma \overline{Y}}
\nabla_{\gamma\overline{X}}
\overline{Z}+\nabla_{[\gamma\overline{X},\gamma\overline{X}]}
\overline{Z}$.
\\
\ \ & hv-curvature\,:\,
$P(\overline{X},\overline{Y})\overline{Z}:=-\nabla_{\beta
\overline{X}} \nabla_{\gamma\overline{Y}}
\overline{Z}+\nabla_{\gamma \overline{Y}} \nabla_{\beta\overline{X}
} \overline{Z}+\nabla_{[\beta\overline{X},\gamma\overline{X}]}
\overline{Z}$.
\\
\ \ & h-curvature\,:\,
$R(\overline{X},\overline{Y})\overline{Z}:=-\nabla_{\beta
\overline{X}} \nabla_{\beta\overline{Y}} \overline{Z}+\nabla_{\beta
\overline{Y}} \nabla_{\beta\overline{X} }
\overline{Z}+\nabla_{\beta\overline{X},\beta\overline{X}]}
\overline{Z}$.
\\[0.1 cm]\hline
&\\
{\bf Chern  }& $S^{\diamond}(\overline{X},\overline{Y})\overline{Z}=
0$.
\\
\ \ & $P^{\diamond}(\overline{X},\overline{Y})\overline{Z}=
  P(\overline{X},\overline{Y})\overline{Z}-T(\widehat{P}(\overline{X},\overline{Y}), \overline{Z})
  +(\nabla_{\beta \overline{X}}T)(\overline{Y},\overline{Z})$.
\\
\ \ &$R^{\diamond}(\overline{X},\overline{Y})\overline{Z}=
  R(\overline{X},\overline{Y})\overline{Z}-T(\widehat{R}(\overline{X},\overline{Y}),
  \overline{Z})$.
\\[0.1 cm]\hline
&\\
{\bf Hashiguchi }& ${S}^{*}(\overline{X},\overline{Y})\overline{Z}=
S(\overline{X},\overline{Y})\overline{Z}$.
\\
\ \ & ${P}^{*}(\overline{X},\overline{Y})\overline{Z}=
P(\overline{X},\overline{Y})\overline{Z}
  + \widehat{P}({{T}}(\overline{X},\overline{Y})
  , \overline{Z})+(\nabla_{\gamma \overline{Y}}\widehat{P})(\overline{X},\overline{Z})$.
\\
\ \ & ${R}^{*}(\overline{X},\overline{Y})\overline{Z}=
R(\overline{X},\overline{Y})\overline{Z}-
  \mathfrak{U}_{\overline{X},\overline{Y}}\{
   (\nabla_{\beta \overline{X}}\widehat{P})(\overline{Y}, \overline{Z})
  +\widehat{P}(\overline{X},\widehat{P}(\overline{Y},\overline{Z})) \}$.
\\[0.1 cm]\hline
&\\
{\bf Berwald}&
${{S}}^{\circ}(\overline{X},\overline{Y})\overline{Z}= 0 $.
\\
\ \ &${{P}}^{\circ}(\overline{X},\overline{Y})\overline{Z}=
P(\overline{X},\overline{Y})\overline{Z}+
  (\nabla_{\gamma \overline{Y}}\widehat{P})(\overline{X},\overline{Z})
  +\widehat{P}(T(\overline{Y},\overline{X}),\overline{Z})+$
  \\
\ \ &$ { \ \  \ \ \ }
   +\widehat{P}(\overline{X},T(\overline{Y},\overline{Z}))
   +(\nabla_{\beta\overline{X}}T)(\overline{Y},\overline{Z})-$
\\
\ \ & ${ \ \ \ } -T(\overline{Y},
\widehat{P}(\overline{X},\overline{Z}))-T( \widehat{P}(\overline{X},
\overline{Y}), \overline{Z})$.
\\
\ \ & ${{R}}^{\circ}(\overline{X},\overline{Y})\overline{Z}=
R(\overline{X},\overline{Y})\overline{Z}-
T(\widehat{R}(\overline{X},\overline{Y}),\overline{Z})-
\mathfrak{U}_{\overline{X},\overline{Y}}\{(\nabla_{\beta
\overline{X}}\widehat{P})(\overline{Y}, \overline{Z})$
\\
\ \ &$
+\widehat{P}(\overline{X},\widehat{P}(\overline{Y},\overline{Z}))\}.$
\\[0.1 cm]\hline
\end{tabular}
\end{center}

\bigskip
\bigskip

\bigskip
\providecommand{\bysame}{\leavevmode\hbox
to3em{\hrulefill}\thinspace}
\providecommand{\MR}{\relax\ifhmode\unskip\space\fi MR }
\providecommand{\MRhref}[2]{%
  \href{http://www.ams.org/mathscinet-getitem?mr=#1}{#2}
} \providecommand{\href}[2]{#2}

\end{document}